\documentclass[a4,12pt]{amsart}
\usepackage[utf8]{inputenc}
\usepackage{amsfonts}
\usepackage{latexsym}
\usepackage{amssymb}
\usepackage{amsmath}

\usepackage[dvipsnames]{xcolor}
\usepackage{tikz}
\usepackage{enumerate}
\usepackage{mathrsfs}
\usepackage{amsthm}

\usepackage{hyperref}

\hypersetup{
    colorlinks,
    linkcolor=CadetBlue,
    citecolor=CadetBlue,
    urlcolor=CadetBlue
}

\usepackage{graphicx}

\usepackage{caption}
\usepackage{subcaption}
\numberwithin{equation}{section}

\usepackage[leftcaption]{sidecap}
\sidecaptionvpos{figure}{m}

\usepackage{subcaption}
\usepackage[bottom]{footmisc}

\usepackage{accents}

\usepackage{pgfplots}
\pgfplotsset{compat=1.16}

\tikzset{
    cross/.pic = {
    \draw[rotate = 45] (-#1,0) -- (#1,0);
    \draw[rotate = 45] (0,-#1) -- (0, #1);
    }
}

\usepackage[left=2.2cm, top=1.9cm,bottom=1.9cm,right=2.2cm]{geometry}





\newtheorem{thm}{Theorem}[section]

\newtheorem{lemma}[thm]{Lemma}
\newtheorem{df}[thm]{Definition}


\usepackage{nomencl}
\makenomenclature


\title{Beads on the torus via scaling limits of dimer matchings}
\author{Samuel G. G. Johnston}
\address{Strand Building, King's College London, Strand, London, United Kingdom}
\subjclass{Primary: 82B20, 82B21, 60K35. Secondary: 60J27}
\keywords{Bead configuration, interlacing, Kasteleyn theory, partition function, Lozenge tiling}


\begin{document}

\maketitle

\begin{abstract}
In a previous article \cite{bead1}, we develop a continuous version of Kasteleyn theory to study the bead model on the torus. These are the point processes on the semi-discrete torus $\mathbb{T}_n := [0,1) \times \{0,1,\ldots,n-1\}$ (thought of as $n$ unit length strings wrapped around a doughnut) with the property that between every two consecutive points on same string, there lies a point on the neighbouring strings. In this companion article, we obtain the main results of the previous article via an alternative route, using scaling limits of dimer models as opposed to the continuous Kasteleyn theory. In any case, we hope that the article may serve as a gentle introduction to Kasteleyn theory on the torus.
\end{abstract}


\section{Introduction} \label{sec:introduction} 

\subsection{Introduction and main results}
This paper is a continuation of \cite{bead1}, and as such we will be brisk when redefining notions from that article.

 We begin by recalling from there the definitions surrounding bead configurations on the semi-discrete torus $\mathbb{T}_n$. Let $\mathbb{Z}_n := \mathbb{Z}/n\mathbb{Z}$ be the cyclic group with $n$ elements, and consider the semi-discrete torus $\mathbb{T}_n := [0,1) \times \mathbb{Z}_n$. For $k \geq 1$, a \emph{bead configuration} on $\mathbb{T}_n$ is a collection of $nk$ distinct points on the torus such that there are $k$ points on each string, and the $k$ points on neighbouring strings interlace. More specifically, if $t_1 < \ldots < t_k$ are the points on string $h$, and $t_1'  < \ldots < t_k'$ are the points on string $h+1$ (mod $n$), then we have either
\begin{align} \label{eq:interlace}
t_1 \leq t_1' < \ldots < t_k \leq t_k' \qquad \text{or} \qquad t_1' < t_1 \leq \ldots \leq t_k' < t_k.
\end{align}
See Figure \ref{fig:test0} for a depiction of a bead configuration.

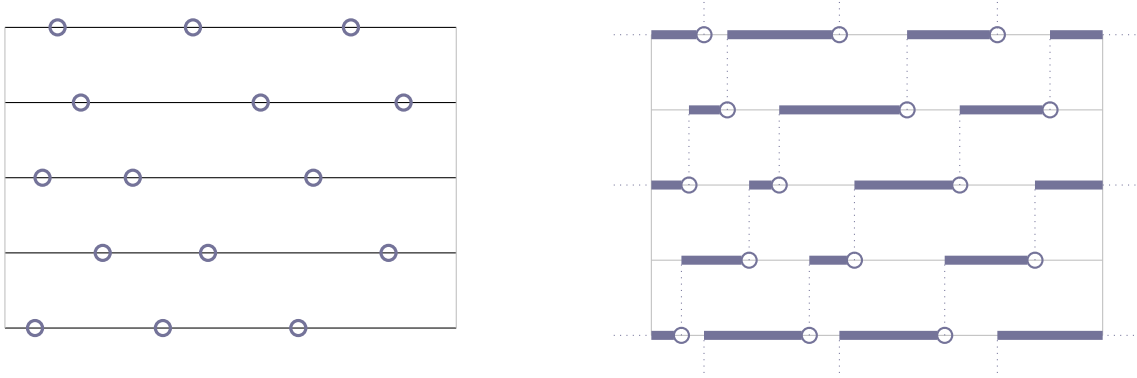
\begin{figure}[h!]
\centering
\begin{subfigure}{.5\textwidth}
\centering
\begin{tikzpicture}
\draw (0,0) -- (6,0);
\draw (0,1) -- (6,1);
\draw (0,2) -- (6,2);
\draw (0,3) -- (6,3);
\draw (0,4) -- (6,4);
\draw[lightgray] (0,0) -- (0,4);
\draw[lightgray] (6,0) -- (6,4);
\draw [very thick, CadetBlue] (0.4,0) circle [radius=0.1];
\draw [very thick, CadetBlue] (2.1,0) circle [radius=0.1];
\draw [very thick, CadetBlue] (3.9,0) circle [radius=0.1];
\draw [very thick, CadetBlue] (5.1,1) circle [radius=0.1];
\draw [very thick, CadetBlue] (1.3,1) circle [radius=0.1];
\draw [very thick, CadetBlue] (2.7,1) circle [radius=0.1];
\draw [very thick, CadetBlue] (0.5,2) circle [radius=0.1];
\draw [very thick, CadetBlue] (1.7,2) circle [radius=0.1];
\draw [very thick, CadetBlue] (4.1,2) circle [radius=0.1];
\draw [very thick, CadetBlue] (1.01,3) circle [radius=0.1];
\draw [very thick, CadetBlue] (3.4,3) circle [radius=0.1];
\draw [very thick, CadetBlue] (5.3,3) circle [radius=0.1];
\draw [very thick, CadetBlue] (0.7,4) circle [radius=0.1];
\draw [very thick, CadetBlue] (2.5,4) circle [radius=0.1];
\draw [very thick, CadetBlue] (4.6,4) circle [radius=0.1];
\draw [white] (2,-0.5) circle [radius=0.1];
\draw [white] (2,5) circle [radius=0.1];
\end{tikzpicture}
  \label{fig:sub10}
\end{subfigure}%
\begin{subfigure}{.5\textwidth}
\centering
\begin{tikzpicture}
\draw[lightgray]  (0,0) -- (6,0);
\draw[lightgray]  (0,1) -- (6,1);
\draw[lightgray]  (0,2) -- (6,2);
\draw[lightgray]  (0,3) -- (6,3);
\draw[lightgray]  (0,4) -- (6,4);
\draw[lightgray] (0,0) -- (0,4);
\draw[lightgray] (6,0) -- (6,4);

\draw [CadetBlue, dotted] (0.4,0) -- (0.4,1); 
\draw [line width=1.2mm, CadetBlue] (0.4,1) -- (1.2,1); 
\draw [CadetBlue, dotted] (1.3,1) -- (1.3,2); 
\draw [line width=1.2mm, CadetBlue] (1.3,2) -- (1.6,2); 
\draw [CadetBlue, dotted] (1.7,2) -- (1.7,3); 
\draw [line width=1.2mm, CadetBlue] (1.7,3) -- (3.3,3);
\draw [CadetBlue, dotted] (3.4,3) -- (3.4,4);
\draw [line width=1.2mm, CadetBlue] (3.4,4) -- (4.5,4);
\draw [CadetBlue, dotted] (4.6,4) -- (4.6,4.5);
\draw [CadetBlue, dotted] (4.6,-0.5) -- (4.6,0);
\draw [line width=1.2mm, CadetBlue] (4.6,0) -- (6,0);
\draw [CadetBlue, dotted] (6.5,0) -- (6,0);
\draw [line width=1.2mm, CadetBlue] (0,0) -- (0.3,0);
\draw [CadetBlue, dotted] (-0.5,0) -- (0,0);

\draw [line width=1.2mm, CadetBlue] (0,2) -- (0.4,2);
\draw [CadetBlue, dotted] (0.5,2) -- (0.5,3);
\draw [line width=1.2mm, CadetBlue] (0.5,3) -- (0.91,3);
\draw [CadetBlue, dotted] (1.01,3) -- (1.01,4);
\draw [line width=1.2mm, CadetBlue] (1.01,4) -- (2.4,4);
\draw [CadetBlue, dotted] (2.5,4) -- (2.5,4.5);
\draw [CadetBlue, dotted] (2.5,-0.5) -- (2.5,0);
\draw [line width=1.2mm, CadetBlue] (2.5,0) -- (3.8,0);
\draw [CadetBlue, dotted] (3.9,0) -- (3.9,1);
\draw [line width=1.2mm, CadetBlue] (3.9,1) -- (5.0,1);
\draw [CadetBlue, dotted] (5.1,1) -- (5.1,2);
\draw [line width=1.2mm, CadetBlue] (5.1,2) -- (6,2);
\draw [CadetBlue, dotted] (6,2) -- (6.5,2);
\draw [CadetBlue, dotted] (-0.5,2) -- (0,2);

\draw [CadetBlue, dotted] (0.7,4) -- (0.7,4.5);
\draw [CadetBlue, dotted] (0.7,-0.5) -- (0.7,0);
\draw [line width=1.2mm, CadetBlue] (0.7,0) -- (2,0);
\draw [CadetBlue, dotted] (2.1,0) -- (2.1,1);
\draw [line width=1.2mm, CadetBlue] (2.1,1) -- (2.6,1);
\draw [CadetBlue, dotted] (2.7,1) -- (2.7,2);
\draw [line width=1.2mm, CadetBlue] (2.7,2) -- (4,2);
\draw [CadetBlue, dotted] (4.1,2) -- (4.1,3);
\draw [line width=1.2mm, CadetBlue] (4.1,3) -- (5.2,3);
\draw [CadetBlue, dotted] (5.3,3) -- (5.3,4);
\draw [line width=1.2mm, CadetBlue] (5.3,4) -- (6,4);
\draw [CadetBlue, dotted] (6,4) -- (6.5,4);
\draw [CadetBlue, dotted] (-0.5,4) -- (0,4);
\draw [line width=1.2mm, CadetBlue] (0,4) -- (0.6,4);

\draw [thick, CadetBlue] (0.4,0) circle [radius=0.1];

\draw [thick, CadetBlue] (2.1,0) circle [radius=0.1];
\draw [thick, CadetBlue] (3.9,0) circle [radius=0.1];

\draw [thick, CadetBlue] (5.1,1) circle [radius=0.1];
\draw [thick, CadetBlue] (1.3,1) circle [radius=0.1];
\draw [thick, CadetBlue] (2.7,1) circle [radius=0.1];

\draw [thick, CadetBlue] (0.5,2) circle [radius=0.1];
\draw [thick, CadetBlue] (1.7,2) circle [radius=0.1];
\draw [thick, CadetBlue] (4.1,2) circle [radius=0.1];

\draw [thick, CadetBlue] (1.01,3) circle [radius=0.1];
\draw [thick, CadetBlue] (3.4,3) circle [radius=0.1];
\draw [thick, CadetBlue] (5.3,3) circle [radius=0.1];

\draw [thick, CadetBlue] (0.7,4) circle [radius=0.1];
\draw [thick, CadetBlue] (2.5,4) circle [radius=0.1];
\draw [thick, CadetBlue] (4.6,4) circle [radius=0.1];
\end{tikzpicture}
  \label{fig:sub20}
\end{subfigure}
\caption{On the left we have a bead configuration on $n = 5$ strings with $k = 3$ beads per string. On the right we have its associated occupation process.}
\label{fig:test0}
\end{figure}

We may associate with any bead configuration on $\mathbb{T}_n$ an \emph{occupation process} $(X_t)_{t \in [0,1]}$ as follows. This is best seen by staring at Figure \ref{fig:test0}. Alternatively, starting from the position above any bead on a string, draw a thick line travelling rightwards, and continue until hitting a bead, at which point we jump up a string. We say a string $h$ is \emph{occupied} at time $t \in [0,1]$ if it carries a thick line, and in this case set $h \in X_t$. (We give a more careful description in the sequel.) There is a one-to-one correspondence between non-empty bead configurations and occupation processes, and we exploit this duality throughout the article. 

\begin{df}
Let $(X_t)_{t \in [0,1]}$ be the occupation process associated with an $(n,k)$ configuration. For some $1 \leq \ell \leq n-1$, at each time $t$ the occupation process $X_t$ takes values in the set of subsets of $\mathbb{Z}_n$ of cardinality $\ell$. We call $\ell$ the \emph{occupation number} of the configuration, and such, say the configuration is an $(n,k,\ell)$ configuration.
\end{df}

Take an $(n,k,\ell)$-configuration, i.e.\ a bead configuration on $n$ unit-length strings with $k$ beads per string. For $1 \leq j \leq k$ and $h \in \mathbb{Z}_n$, write $t_{h,j}$ for the position of the $j^{\text{th}}$ on string $h$. We may therefore associate with each bead configuration a collection $(t_{j,h})_{1 \leq j \leq k, 0 \leq h \leq n-1}$ of elements of $[0,1)$; see Figure \ref{fig:test0}. In particular, we can canonically associate the set of $(n,k)$ configurations with a subset $\mathcal{W}^{(n)}_{k,\ell} := \{(t_{i,j})_{1 \leq i \leq k, 1 \leq j \leq n} \in [0,1)^{nk} : \text{bead configuration} \}$ with a subset of $[0,1)^{nk}$. We write
\begin{align*}
\mathrm{Vol}^{(n)}_{k,\ell} := \text{Volume of the subset $\mathcal{W}^{(n)}_{k,\ell}$ of $[0,1)^{nk}$}.
\end{align*}
We package the volumes $\mathrm{Vol}^{(n)}_{k,\ell}$ of the set of $(n,k,\ell)$ configurations in terms of a generating function. To this end, for $\lambda \in \mathbb{C}$, we define
\begin{align*}
g_N^{\lambda}(x_1,\ldots,x_N) := 
\begin{cases}
e^{- \lambda \ell} \qquad &\text{if for some $k,\ell$, $N = nk$, and $x_1,\ldots,x_{nk}$ is a $(n,k,\ell)$ config.}\\
0 \qquad &\text{otherwise}.
\end{cases}
\end{align*}
We clarify that if $(x_1,\ldots,x_{nk})$, so is $(x_{\sigma(1)},\ldots,x_{\sigma(nk)})$ for any permutation $\sigma$ of the indices. For complex parameters $\lambda$ and $T$ we then define the partition function
\begin{align*}
Z_n(\lambda,T) := \sum_{N \geq 0} \frac{T^N}{N!} \int_{\mathbb{T}_n^N} g_N^\lambda(x_1,\ldots,x_N) \mathrm{d}x_1 \ldots \mathrm{d}x_N = \sum_{k \geq 0} \sum_{ 0 \leq \ell \leq n} T^{nk} e^{-\lambda \ell}\mathrm{Vol}^{(n)}_{k,\ell}.
\end{align*}
Our first main result is the following:

\begin{thm} \label{thm:pf0}
Let $\mathrm{Vol}^{(n)}_{k,\ell}$ denote the volume of the set of bead configurations on $n$ unit length strings with $k$ beads per string and occupation number $\ell$. Then
\begin{align} \label{eq:arrau}
Z_n(\lambda,T) := \sum_{ k \geq 0, 0 \leq \ell \leq n} \frac{T^{nk}}{(nk)!} e^{- \lambda \ell} \mathrm{Vol}^{(n)}_{k,\ell} = \frac{1}{2} \sum_{ \theta \in \{0,1\} } (-1)^{(\theta_1+1)(\theta_2+n+1)} \prod_{ w^n = (-1)^{\theta_2} } (e^{Tw} - (-1)^{\theta_1}e^{ - \lambda} ).
\end{align}
\end{thm}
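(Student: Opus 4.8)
The plan is to recover $Z_n(\lambda,T)$ as a scaling limit of toroidal dimer partition functions, so that the right-hand side of \eqref{eq:arrau} falls out of the usual factorisation of the Kasteleyn determinant on the torus over Fourier modes. First, for each $m\ge 1$ I would discretise the continuous coordinate, replacing $\mathbb{T}_n=[0,1)\times\mathbb{Z}_n$ by the discrete torus $\mathbb{Z}_m\times\mathbb{Z}_n$ and building a bipartite graph $G_{m,n}$ on it whose perfect matchings correspond --- via the occupation-process/interlacing description of \eqref{eq:interlace}, equivalently via lozenge tilings --- to the discretised bead configurations. The edge weights are chosen so that each of the $nk$ beads of an $(n,k,\ell)$-configuration contributes a factor $T/m$, while the factor $e^{-\lambda}$ is inserted as a magnetic field conjugate to the winding number $\ell$ of the occupation process around the continuous cycle. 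A Riemann-sum estimate, together with an elementary tail bound in $k$, then shows that the total weight in the resulting partition function $Z_n^{(m)}(\lambda,T)$ of the $(n,k,\ell)$-configurations equals $\frac{T^{nk}}{(nk)!}e^{-\lambda\ell}\mathrm{Vol}^{(n)}_{k,\ell}\bigl(1+o(1)\bigr)$, whence $Z_n^{(m)}(\lambda,T)\to Z_n(\lambda,T)$ as $m\to\infty$.

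Next I would evaluate $Z_n^{(m)}(\lambda,T)$ by Kasteleyn theory. Fixing a Kasteleyn orientation of $G_{m,n}$, the toroidal version of Kasteleyn's theorem gives
\[
Z_n^{(m)}(\lambda,T)\;=\;\tfrac12\sum_{\theta\in\{0,1\}^2}\varepsilon_\theta\,\det K^{(m)}_{\theta_1,\theta_2},
\]
where $K^{(m)}_{\theta_1,\theta_2}$ is the reduced (white-to-black) Kasteleyn matrix with boundary conditions twisted by $(-1)^{\theta_1}e^{-\lambda}$ along the continuous cycle (the $e^{-\lambda}$ being the magnetic field) and by $(-1)^{\theta_2}$ along the $\mathbb{Z}_n$-cycle, and $\varepsilon_\theta\in\{\pm1\}$ are the four signs attached to the spin structures. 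Since $G_{m,n}$ is $(\mathbb{Z}_m\times\mathbb{Z}_n)$-periodic, each $K^{(m)}_{\theta_1,\theta_2}$ is block-diagonalised by the discrete Fourier transform, so that
\[
\det K^{(m)}_{\theta_1,\theta_2}\;=\;\prod_{z^m=(-1)^{\theta_1}e^{-\lambda}}\ \prod_{w^n=(-1)^{\theta_2}}P_m(z,w),
\]
where $P_m(z,w)$ is the characteristic polynomial of a single fundamental domain; after a gauge transformation of the edge weights it may be taken affine in $z$ with unit leading coefficient, $P_m(z,w)=\alpha_m(w)-z$, and the per-bead weight $T/m$ forces $\alpha_m(w)=1+Tw/m+o(1/m)$, so that $\alpha_m(w)^m\to e^{Tw}$.

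Combining these with the elementary identity $\prod_{z^m=\eta}(\alpha-z)=\alpha^m-\eta$ (the roots of $z^m=\eta$ being the zeros of $X^m-\eta$), we obtain
\[
\prod_{z^m=(-1)^{\theta_1}e^{-\lambda}}P_m(z,w)\;=\;\alpha_m(w)^m-(-1)^{\theta_1}e^{-\lambda}\;\xrightarrow[m\to\infty]{}\;e^{Tw}-(-1)^{\theta_1}e^{-\lambda},
\]
and hence, taking the product over $w^n=(-1)^{\theta_2}$ and substituting back (and checking that the $\theta$-dependent constants from the gauge transformation and the orientation combine to the asserted $\varepsilon_\theta=(-1)^{(\theta_1+1)(\theta_2+n+1)}$),
\[
Z_n^{(m)}(\lambda,T)\;\xrightarrow[m\to\infty]{}\;\tfrac12\sum_{\theta\in\{0,1\}^2}(-1)^{(\theta_1+1)(\theta_2+n+1)}\prod_{w^n=(-1)^{\theta_2}}\bigl(e^{Tw}-(-1)^{\theta_1}e^{-\lambda}\bigr).
\]
Together with the convergence $Z_n^{(m)}(\lambda,T)\to Z_n(\lambda,T)$, this is \eqref{eq:arrau}.

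I expect the delicate part to be the first step: exhibiting the weighted graph $G_{m,n}$ that encodes the bead model, controlling the discretisation error uniformly in the number of beads, and fixing the edge weights so that the $T^{nk}/(nk)!$ and $e^{-\lambda\ell}$ normalisations come out exactly rather than up to an unwanted constant. The sign accounting in the Kasteleyn step --- combining the intrinsic spin-structure signs with the gauge and orientation choices, and keeping track of the parity of $n$ that enters through the length of the $\mathbb{Z}_n$-cycle --- is also fiddly, as toroidal sign conventions always are, but it is a finite check. By contrast the core of the argument, the Fourier factorisation followed by the Trotter-type limit $(1+c/m)^m\to e^{c}$, is routine.
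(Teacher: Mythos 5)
Your overall architecture is the same as the paper's: discretise $\mathbb{T}_n$ to a toroidal dimer model, express the partition function as a half-sum of four signed Kasteleyn determinants, block-diagonalise by Fourier transform, and then send $m\to\infty$. What is genuinely different — and markedly simpler — is how you handle the decisive step: the $m$-fold product over horizontal Fourier modes. The paper proves (Theorem \ref{thm:product max}) that, for $m$ even,
\begin{equation*}
\prod_{j=0}^{m-1}\left|\,1+e^{2\pi i(j+\theta_1/2)/m}+\tfrac{z}{m}\,\right| \longrightarrow \bigl|\,e^z-(-1)^{\theta_1}\,\bigr|,
\end{equation*}
via an elaborate asymptotic analysis: split $\log\prod$ into an integral piece $A_m$ and an error piece $B_m$, evaluate $A_m$ by a contour argument (Lemma \ref{lem:ALEM}), rewrite $B_m$ as a lattice sum (Lemma \ref{lem:tech}), then evaluate that sum using the Gamma reflection formula, Stirling asymptotics, and Poisson summation (Lemmas \ref{lem:fourier}, \ref{lem:sum int}). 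Your polynomial identity $\prod_{z^m=\eta}(\alpha-z)=\alpha^m-\eta$ short-circuits all of this. Concretely: the $\omega_j:=e^{2\pi i(j+\theta_1/2)/m}$ are exactly the $m$ distinct $m$-th roots of $(-1)^{\theta_1}$, so for $m$ even
\begin{equation*}
\prod_{j=0}^{m-1}\Bigl(1+\bigl(1-\tfrac{\lambda}{m}\bigr)\omega_j+\tfrac{Tw}{m}\Bigr)=\Bigl(1+\tfrac{Tw}{m}\Bigr)^{m}-(-1)^{\theta_1}\Bigl(1-\tfrac{\lambda}{m}\Bigr)^{m},
\end{equation*}
an exact identity for every finite $m$, which tends to $e^{Tw}-(-1)^{\theta_1}e^{-\lambda}$ by the Trotter-type limit $(1+c/m)^m\to e^c$. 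This gives the product itself (not just its modulus), so you never need the triangle-inequality hypothesis of Lemma \ref{lem:signs} to determine the sign of the determinant, and the paper's Theorem \ref{thm:product max} becomes a one-line corollary of a fact about monic polynomials rather than a section's worth of analysis. In exchange, your route is rigid — the exact factorisation is special to the affine dependence on $z$ — whereas the paper's $A_m+B_m$ decomposition is the kind of estimate that would survive perturbations of the weights; but for proving \eqref{eq:arrau} the polynomial identity is plainly the better tool. The two places your sketch leaves work to do — the Riemann-sum identification $Y_{m,n}(1,1-\tfrac{\lambda}{m},\tfrac{T}{m})\to Z_n(\lambda,T)$ and the finite $\theta$-sign check — correspond to \eqref{eq:scapf} (which the paper also only asserts) and the parity computation in Lemma \ref{lem:lazy}, so you are no further from completeness than the paper is; and your small caveat $\alpha_m(w)=1+Tw/m+o(1/m)$ can be sharpened to an exact equality once the weights are fixed as in \eqref{eq:scaling}.
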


In \cite{bead1}, for $\theta \in \{0,1\}^2$ we define the functions
\begin{align*}
g_N^{\lambda,\theta}(x_1,\ldots,x_N) := 
\begin{cases}
 \frac{1}{2} (-1)^{(\theta_1+k+1)(\theta_2+n+\ell+1)} e^{- \lambda \ell} \qquad &\text{if $N = nk$, and $x_1,\ldots,x_{nk}$ is a $(n,k,\ell)$ config.}\\
0 \qquad &\text{otherwise},
\end{cases}
\end{align*}
and show that they satisfy
\begin{align} \label{eq:gsum0}
g_N^\lambda = \sum_{ \theta \in \{0,1\}^2} g_N^{\lambda,\theta}.
\end{align} 
We now define a probability measure $\mathbf{P}_n^{\lambda,T}$ on bead configurations by setting
\begin{align*}
\mathbf{P}_n^{\lambda,T} \left( \text{There are $nk$ beads, they have locations in $\mathrm{d}x_1,\ldots,\mathrm{d}x_{nk}$} \right) := \frac{ \frac{T^{nk}}{(nk)!}  g^\lambda_{nk}(x_1,\ldots,x_{nk}) }{ Z_n(\lambda,T) } \mathrm{d}x_1 \ldots \mathrm{d}x_{nk}.
\end{align*}
It turns out to be profitable to decompose $\mathbf{P}_n^{\lambda,T}$ as an affine combination of signed measures. (We recall that a signed measure is a countably additive real-valued function on a sigma algebra.) Indeed, for $\theta \in \{0,1\}^2$ we define a signed measure $\mathbf{P}_n^{\lambda,\theta,T}$ on bead configurations on $\mathbb{T}_n$ by setting
\begin{align*}
\mathbf{P}_n^{\lambda,\theta,T} \left( \text{There are $nk$ beads, they have locations in $\mathrm{d}x_1,\ldots,\mathrm{d}x_{nk}$} \right) := \frac{ \frac{T^{nk}}{(nk)!}  g^{\lambda,\theta}_{nk}(x_1,\ldots,x_{nk}) }{ Z_n(\lambda,T) } \mathrm{d}x_1 \ldots \mathrm{d}x_{nk},
\end{align*}
where $Z_{n,\theta}(\lambda,T)$ is a normalising factor so that $\mathbf{P}^{\lambda,\theta,T}$ has unit total mass. According to \eqref{eq:gsum0}, we can then write 
\begin{align} \label{eq:Psum0}
\mathbf{P}_n^{\lambda,T} = \sum_{ \theta \in \{0,1\}^2 } \frac{Z_{n,\theta}(\lambda,T)}{Z_n(\lambda,T)} \mathbf{P}_n^{\lambda,\theta,T}.
\end{align}
Each $\mathbf{P}_n^{\lambda,\theta,T}$ has a highly tractable determinantal structure.

Given a random bead configuration on $\mathbb{T}_n$ distributed according to $\mathbf{P}_n^{\lambda,T}$, we have a stochastic occupation process $(X_t)_{t \in [0,1)}$ taking values in the set of subsets of $\mathbb{Z}_n$ with a fixed (random) cardinality $\ell$. With a view to studying exclusion processes in the sequel, we find it fruitful to show that the measures $\mathbf{P}^{\lambda,\theta,T}$ may be endowed with a richer determinantal structure, allowing us to express the joint probabilities of not just the probability that beads lie at certain locations in $\mathbb{T}_n$, but also that certain locations are occupied or unoccupied by the process $\mathbb{T}_n$. To this end, we have the following definition:

\begin{df} \label{df:Gamma0}
Let $(x_i)_{ i \in \mathcal{B} \cup \mathcal{O} \cup \mathcal{U} }$ be distinct points of $\mathbb{T}_n$ indexed by disjoints sets $\mathcal{B}$, $\mathcal{O}$ and $\mathcal{U}$. 
Given a bead configuration and its associated occupation process, let $\Gamma(x_i : i \in \mathcal{B} \cup \mathcal{O} \cup \mathcal{U} )$ be the event that
\begin{itemize}
\item Each $\mathrm{d}x_i$ with $i \in \mathcal{B}$ contains a bead.
\item Each $x_i$ with $i \in \mathcal{O}$ is occupied.
\item Each $x_i$ with $i \in \mathcal{U}$ is unoccupied.
\end{itemize}
\end{df}

We have the following result:

\begin{thm} \label{thm:corr}
Define the operators $H^{\lambda,\theta,T},K_n^{\lambda,\theta,T}:\mathbb{T}_n \times \mathbb{T}_n \to \mathbb{C}$ by 
\begin{align} \label{eq:anotherH}
H^{\lambda,\theta,T}(y,y') = \frac{T}{n} \sum_{ z^n=(-1)^{\theta_2}} z^{1+h-h'} \frac{e^{ - (\lambda+\theta_1 \pi i +Tz)[t'-t] }}{1 - e^{- (\lambda+\theta_1 \pi i+Tz) }},
\end{align}
and 
\begin{align} \label{eq:anotherK}
K_n^{\lambda,\theta,T}(y,y') := - \frac{1}{n} \sum_{z^n = (-1)^{\theta_2}} \frac{ z^{h-h'} e^{ - (\lambda+\theta_1 \pi i+Tz)([t'-t]+\mathrm{1}_{t'=t}) } }{ 1 - e^{-  (\lambda+\theta_1 \pi i +Tz) } } .
\end{align}
Then with $\Gamma(x_i:i \in \mathcal{B} \cup \mathcal{O} \cup \mathcal{U})$ as in Definition \ref{df:Gamma0} we have 
\begin{align*}
\mathbf{P}_n^{\lambda,\theta,T}(\Gamma(x_i : i \in \mathcal{B} \cup \mathcal{O} \cup \mathcal{U} ))= \left( \det \limits_{i,j \in \mathcal{O} \sqcup \mathcal{U} \sqcup \mathcal{B} } K^*_{i,j} \right) \prod_{i \in \mathcal{B}} \mathrm{d}t_i
\end{align*}
where, setting $K_{i,j} := K_n^{\lambda,\theta,T}(x_i,x_j) $ and $H_{i,j} := H^{\lambda,\theta,T}(x_i,x_j) $ we have 
\begin{align*}
K^*_{i,j} =
\begin{cases}
K_{i,j} \qquad &\text{if $i \in \mathcal{O}$},\\
\delta_{i,j} - K_{i,j} \qquad &\text{if $i \in \mathcal{U}$},\\
H_{i,j} \qquad &\text{if $i \in \mathcal{B}$}.
\end{cases}
\end{align*}
\end{thm}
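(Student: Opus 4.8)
The strategy is to reduce the continuous bead statement to a discrete dimer computation on the torus and then take a scaling limit, exactly in the spirit announced in the abstract. First I would fix $\theta$ and replace the semi-discrete torus $\mathbb{T}_n$ by the discrete torus $\mathbb{Z}_m \times \mathbb{Z}_n$, encoding a bead configuration as a dimer matching (equivalently a lozenge tiling / non-intersecting lattice path ensemble) under the standard bijection, so that beads become a distinguished type of edge and ``occupied'' vs.\ ``unoccupied'' strings at a given horizontal coordinate correspond to the presence or absence of a path there. Under $\mathbf{P}_n^{\lambda,\theta,T}$ the weight $e^{-\lambda\ell}$ times the sign prefactor is precisely one of the four Kasteleyn sectors on the torus (the four spin structures indexed by $\theta\in\{0,1\}^2$), so each $\mathbf{P}_n^{\lambda,\theta,T}$ is, for finite $m$, a determinantal (indeed discrete-determinantal) measure whose correlation kernel is the inverse Kasteleyn matrix. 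This is where the ``highly tractable determinantal structure'' alluded to after \eqref{eq:Psum0} gets used; I would either cite the finite-$m$ statement from \cite{bead1} or reprove it quickly via the Kenyon/Kasteleyn local-statistics formula.

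With the finite-$m$ kernel in hand, the second step is the explicit diagonalisation. On $\mathbb{Z}_m\times\mathbb{Z}_n$ with the $\theta$-twisted (anti)periodic boundary conditions, the Kasteleyn matrix is a convolution operator, so Fourier analysis in both directions diagonalises it: the dual variable in the $\mathbb{Z}_n$ direction runs over $n$-th roots of $(-1)^{\theta_2}$ (this is the source of the sum $\sum_{z^n=(-1)^{\theta_2}}$), and the dual variable in the $\mathbb{Z}_m$ direction runs over $m$-th roots of $(-1)^{\theta_1}$, contributing the $\theta_1\pi i$ shift and, after summation, the geometric-series denominators $1-e^{-(\lambda+\theta_1\pi i+Tz)}$. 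The bead-edge weights carry the factor $T/n$ and a power of $z$, producing $H^{\lambda,\theta,T}$; the ``path present/absent'' observables are height-function increments, which in the dimer picture are sums of edge indicators, and these produce $K_n^{\lambda,\theta,T}$ with its characteristic $[t'-t]$ fractional-part exponent and the $\mathbf 1_{t'=t}$ correction coming from the on-diagonal normal-ordering ambiguity. I would carry out the $m\to\infty$ limit at the level of these Fourier sums: the $\mathbb{Z}_m$-sum becomes the geometric series above, lattice positions $\lfloor mt\rfloor/m$ converge to $t\in[0,1)$, and the discrete path-indicator correlations converge to the continuous occupation correlations; uniform integrability / dominated convergence of the Fourier sums (using $|z|=1$ and $\Re(\lambda)$ in a suitable range, then analytic continuation in $\lambda,T$) justifies passing the limit inside the determinant.

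The third step is purely linear-algebraic and is the bookkeeping that turns a single determinantal kernel $K$ into the block matrix $K^*$: for the occupied sites one keeps $K_{i,j}$, for unoccupied sites one uses the complementation identity $\delta_{i,j}-K_{i,j}$ (a row operation on the determinant, valid because $K$ arises as a projection-type kernel in the limit), and for bead sites one replaces the corresponding row by $H_{i,j}$, reflecting that a bead is a different local pattern than a path-edge and so couples to the configuration through the ``derivative'' kernel $H$ rather than through $K$; the product $\prod_{i\in\mathcal B}\mathrm dt_i$ records that bead positions are genuine densities on $[0,1)$ while occupation events are atoms. Assembling these gives exactly the claimed formula.

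The main obstacle I anticipate is controlling the scaling limit of the \emph{mixed} correlations — the joint law of genuine beads (a density object, limit of a single edge type) together with occupation/non-occupation events (a global height-function object, limit of many edges) — uniformly enough to exchange limit and determinant, and in particular pinning down the exact on-diagonal term $\mathbf 1_{t'=t}$ in $K_n^{\lambda,\theta,T}$: this is the delicate point where the discrete ``which edge at a vertex'' normalisation must be matched to the continuous convention, and getting the sign and the placement of the indicator right (so that the complementation $\delta_{i,j}-K_{i,j}$ for unoccupied sites is consistent) requires care rather than cleverness. Everything else — the Fourier diagonalisation, the geometric series, the analytic continuation in $\lambda$ and $T$ — is routine once the finite-$m$ determinantal representation is granted.
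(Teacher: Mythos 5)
Your high-level plan matches the paper's architecture: express $\mathbf{P}_n^{\lambda,\theta,T}$ as the scaling limit of a signed dimer measure on $\mathbb{Z}_m\times\mathbb{Z}_n$ in the $\theta$-sector, use the finite-$m$ Kasteleyn determinantal structure (Sections~\ref{sec:discrete}--\ref{sec:constant}) to write correlations as minors of the inverse Kasteleyn kernel \eqref{eq:inverse45}, pass $m\to\infty$ entrywise in the determinant, and then do the algebraic bookkeeping to identify the limiting entries with $K$, $\delta-K$, or $H$. That is exactly the route the paper follows, and you are right that the sole delicate point is the equal-time ($t'=t$) normalisation.

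There is, however, a real gap in your account of step two. You say the $\mathbb{Z}_m$-sum ``becomes the geometric series above,'' but the limit of
$\frac1m\sum_{j=0}^{m-1} e^{-2\pi i\frac{j+\theta_1/2}{m}\lfloor sm\rfloor}\big(1+e^{2\pi i\frac{j+\theta_1/2}{m}}+\tfrac{z}{m}\big)^{-1}$
is not obtained by summing a geometric series. Its formal termwise limit is the \emph{conditionally convergent} Fourier series $\sum_{j\in\mathbb{Z}} e^{2\pi ijs}\big(2\pi i(j+\theta_1/2)+z\big)^{-1}$, which the paper makes rigorous via Abel-summation tail bounds (Lemma~\ref{lem:tech1}) and evaluates via Poisson summation (Lemma~\ref{lem:fourier}), giving the $1-e^{-z}$ denominator only at the end. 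More importantly, the interchange of limit and sum that your sketch presupposes is valid only for $s\in(0,1)$: at $s=0$ — precisely the equal-time case that produces both $\mathbf 1_{t'=t}$ in $K$ and the $\delta_{i,j}-K_{i,j}$ rule for $\mathcal U$ — an additional contour-integral contribution survives the limit (Lemma~\ref{lem:tech0} and Theorem~\ref{thm:inverselim}). Thus the complementation $\delta_{i,j}-K_{i,j}$ is not a ``row operation'' or a projection-kernel identity imposed after the fact; it falls directly out of this $s=0$ asymptotics via the identity \eqref{eq:turner}--\eqref{eq:turner3}. Likewise, the analytic continuation in $\lambda,T$ you propose to justify passing to the limit is neither needed nor used; the paper controls convergence directly and uniformly through those Abel-summation estimates.
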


\subsection{Dimer matchings and discrete bead configurations}
Direct proofs of Theorems \ref{thm:pf0} and \ref{thm:corr} appear in \cite{bead1} using a continuous analogue of Kasteleyn theory. In the remainder of this article, we obtain proofs of these results via alternative means, developing a scaling limit of a dimer model on the torus. To this end, for positive integers $p$, let $\mathbb{Z}_p$ denote the cylic group of order $p$. Let $m =(m_1,m_2)$ be an ordered pair of positive integers, and consider the discrete torus $\mathbb{T}_{m_1,m_2} := \mathbb{Z}_{m_1} \times \mathbb{Z}_{m_2}$. Whenever $(n_1,n_2)$ is an ordered pair of positive integers, we write $[(n_1,n_2)]_m$ or simply $[(n_1,n_2)]$ for the residue class of $(n_1,n_2)$ modulo $(m_1,m_2)$. Let $\mathrm{Sym}(\mathbb{T}_{m})$ denote the group of bijections of $\mathbb{T}_{m}$. We say $\sigma$ in $\mathrm{Sym}(\mathbb{T}_{m})$ is a \emph{dimer matching} if for every $x \in \mathbb{T}_{m}$ we have 
\begin{align*}
\sigma(x) \in \{ x , [x + \mathbf{e}^1], [x + \mathbf{e}^2] \},
\end{align*}
where $\mathbf{e}^1 := (1,0)$, $\mathbf{e}^2 := (0,1)$. 

These random permutations on $\mathbb{Z}_{m_1} \times \mathbb{Z}_{m_2}$ may be thought of as discrete analogues of bead configurations on the semi-discrete torus $\mathbb{T}_n := [0,1) \times \mathbb{Z}_n$. Indeed, given a permutation $\sigma:\mathbb{Z}_{m_1} \times \mathbb{Z}_{m_2} \to \mathbb{Z}_{m_1} \times \mathbb{Z}_{m_2}$, we can label the squares of $\mathbb{Z}_{m_1} \times \mathbb{Z}_{m_2}$ according to the following rule
\begin{itemize}
\item $x$ is white if $\sigma(x) = x$.
\item $x$ has a solid circle if $\sigma(x) = x+  \mathbf{e}_1$.
\item $x$ has a hollow circle if $\sigma(x) = x + \mathbf{e}_2$.
\end{itemize}
The hollow circles in $\mathbb{Z}_{m_1} \times \mathbb{Z}_{m_2}$ may be thought of as discrete analogues of beads, and the squares with solid circles may be thought of as demarcating an occupation process.

\begin{figure}[h!] 
\centering \begin{tikzpicture}
\draw[step=1cm,gray,very thin] (0,0) grid (12,5);

\draw [CadetBlue, thick] (3.5,0.5) circle [radius=0.3];
\draw [CadetBlue, thick] (5.5,0.5) circle [radius=0.3];
\draw [CadetBlue, thick] (11.5,0.5) circle [radius=0.3];

\draw [CadetBlue, thick] (2.5,1.5) circle [radius=0.3];
\draw [CadetBlue, thick] (4.5,1.5) circle [radius=0.3];
\draw [CadetBlue, thick] (10.5,1.5) circle [radius=0.3];

\draw [CadetBlue, thick] (0.5,2.5) circle [radius=0.3];
\draw [CadetBlue, thick] (3.5,2.5) circle [radius=0.3];
\draw [CadetBlue, thick] (7.5,2.5) circle [radius=0.3];

\draw [CadetBlue, thick] (1.5,3.5) circle [radius=0.3];
\draw [CadetBlue, thick] (3.5,3.5) circle [radius=0.3];
\draw [CadetBlue, thick] (10.5,3.5) circle [radius=0.3];

\draw [CadetBlue, thick] (2.5,4.5) circle [radius=0.3];
\draw [CadetBlue, thick] (5.5,4.5) circle [radius=0.3];
\draw [CadetBlue, thick] (11.5,4.5) circle [radius=0.3];

\draw[CadetBlue, fill, thick]  (2.5,0.5) circle [radius=0.3];
\draw[CadetBlue, fill, thick]  (3.5,1.5) circle [radius=0.3];
\draw[CadetBlue, fill, thick]  (3.5,4.5) circle [radius=0.3];
\draw[CadetBlue, fill, thick]  (4.5,4.5) circle [radius=0.3];
\draw[CadetBlue, fill, thick]  (5.5,1.5) circle [radius=0.3];
\draw[CadetBlue, fill, thick]  (6.5,1.5) circle [radius=0.3];
\draw[CadetBlue, fill, thick]  (7.5,1.5) circle [radius=0.3];
\draw[CadetBlue, fill, thick]  (8.5,1.5) circle [radius=0.3];
\draw[CadetBlue, fill, thick]  (9.5,1.5) circle [radius=0.3];
\draw[CadetBlue, fill, thick]  (10.5,2.5) circle [radius=0.3];
\draw[CadetBlue, fill, thick]  (11.5,2.5) circle [radius=0.3];
\draw[CadetBlue, fill, thick]  (0.5,3.5) circle [radius=0.3];
\draw[CadetBlue, fill, thick]  (1.5,4.5) circle [radius=0.3];
\draw[CadetBlue, fill, thick]  (11.5,2.5) circle [radius=0.3];

\draw[CadetBlue, fill, thick]  (4.5,2.5) circle [radius=0.3];
\draw[CadetBlue, fill, thick]  (5.5,2.5) circle [radius=0.3];
\draw[CadetBlue, fill, thick]  (6.5,2.5) circle [radius=0.3];
\draw[CadetBlue, fill, thick]  (7.5,3.5) circle [radius=0.3];
\draw[CadetBlue, fill, thick]  (8.5,3.5) circle [radius=0.3];
\draw[CadetBlue, fill, thick]  (9.5,3.5) circle [radius=0.3];
\draw[CadetBlue, fill, thick]  (10.5,4.5) circle [radius=0.3];

\draw[CadetBlue, fill, thick]  (11.5,1.5) circle [radius=0.3];
\draw[CadetBlue, fill, thick]  (0.5,1.5) circle [radius=0.3];
\draw[CadetBlue, fill, thick]  (1.5,1.5) circle [radius=0.3];
\draw[CadetBlue, fill, thick]  (2.5,2.5) circle [radius=0.3];
\end{tikzpicture}
\caption{A dimer matching on $\mathbb{T}_m$ with $m_1=12$ and $m_2=5$. The squares containing a hollow (resp. solid) circle correspond to the $x \in \mathbb{T}_m$ for which $\sigma(x) = x + \mathbf{e}_2$ (resp. $\sigma(x) = x + \mathbf{e}_1$). The empty squares are fixed by $\sigma$.}
\label{fig:beethoven}
\end{figure}
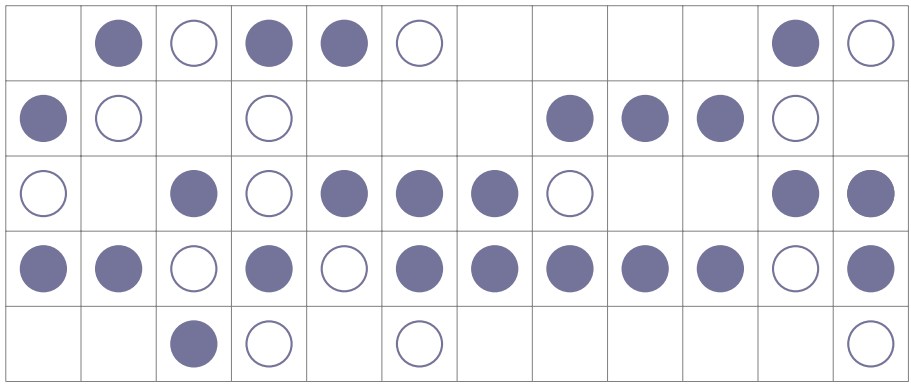

In parallel to our study of bead configurations, we will be interested in counting the number of dimer matchings of $\mathbb{T}_{m_1,m_2}$, and thereafter in studying properties of random dimer matchings. In this direction we introduce the generating function

\begin{align} \label{eq:PF20}
Y_{m_1,m_2}(\alpha,\beta,\gamma) &= \sum_{ \sigma \text{ dimer matching on $\mathbb{T}_{m_1,m_2}$}} \alpha^{ \# \{ x : \sigma(x) = x \} } \beta^{ \# \{ x : \sigma(x) = [x + \mathbf{e}^1] \} } \gamma^{ \# \{ x : \sigma(x) = [x + \mathbf{e}^2] \} } \nonumber \\  
&= \sum_{ \sigma \text{ dimer matching on $\mathbb{T}_{m_1,m_2}$}} \alpha^{ \# \text{white}} \beta^{ \# \text{blue}} \gamma^{ \# \text{black}} .
\end{align}
In Section \ref{sec:kasteleyn} we lay the groundwork, developing a discrete Kasteleyn theory on the torus from scratch. In the following Section \ref{sec:pf0proof} we prove Theorem \ref{thm:pf0}, and thereafter in Section \ref{sec:corrproof} we prove Theorem \ref{thm:corr}.

Again in parallel to the continuous bead case, we will be interested in signed analogues of this partition function. We define the \emph{type} $h(\sigma) := (h(\sigma)_1,h(\sigma)_2)$ in $\mathbb{Z}_{ \geq 0}^2$ of a dimer matching by
\begin{align} \label{eq:exit}
h(\sigma)_i := \# \{ x \in \mathbb{T}_{m} : \sigma(x) = [x + \mathbf{e}^i] \text{ and } \sigma(x)_i = 0 \}.
\end{align}
That is, $h(\sigma)_1$ (resp. $h(\sigma)_2$) counts the number of times a cycle of $\sigma$ exits and reenters the torus $\mathbb{T}_{m_1,m_2}$ horizontally (resp. vertically). We set 
\begin{align} \label{eq:PF20theta}
Y_{m_1,m_2,\theta}(\alpha,\beta,\gamma) &= \sum_{ \sigma \text{ dimer matching on $\mathbb{T}_{m_1,m_2}$}} (-1)^{\theta_1 h_1(\sigma) + \theta_2 h_2(\sigma)} \alpha^{ \# \{ x : \sigma(x) = x \} } \beta^{ \# \{ x : \sigma(x) = [x + \mathbf{e}^1] \} } \gamma^{ \# \{ x : \sigma(x) = [x + \mathbf{e}^2] \} },
\end{align}
and define a signed measure of unit mass on dimer matchings by setting
\begin{align} \label{eq:PF20theta}
P_{m_1,m_2,\theta}^{\alpha,\beta,\gamma}(\sigma) := \frac{ (-1)^{\theta_1 h_1(\sigma) + \theta_2 h_2(\sigma)} \alpha^{ \# \{ x : \sigma(x) = x \} } \beta^{ \# \{ x : \sigma(x) = [x + \mathbf{e}^1] \} } \gamma^{ \# \{ x : \sigma(x) = [x + \mathbf{e}^2] \} } }{ Y_{m_1,m_2,\theta}(\alpha,\beta,\gamma) }.
\end{align}

Now given a dimer matching $\sigma:\mathbb{T}_{m,n} \to \mathbb{T}_{m,n}$, we may associate a bead configuration and its associated occupation process $(X_t)_{t \in [0,1)}$ by setting 
\begin{itemize}
\item $ \sigma(x,h) = (x,h) + \mathbf{e}_2 \implies (x/m, h) \text{ is a bead},$
\item $ \sigma(x,h) = (x,h) + \mathbf{e}_1 \implies (s, h) \text{ is occupied for all $s \in \left[\frac{x}{m},\frac{x+1}{m}\right)$},$
\item $
 \sigma(x,h) = (x,h) \implies (s, h) \text{ is unoccupied for all $s \in \left[\frac{x}{m},\frac{x+1}{m}\right)$}.
$
\end{itemize}
Strictly speaking, this creates a slightly generalised form of a bead configuration, where the inequalities in \eqref{eq:interlace} may not be strict.
Given this association, we will consider a scaling limit of $\mathbb{T}_{m_1,m_2}$ and the weights $\alpha,\beta$ and $\gamma$ in which the horizontal coordinate $m_1$ of the rectangle are sent to infinity while the vertical coordinates remain fixed. Namely, we will be interested in the scaling limit involving constant weights $A = (\alpha_x,\beta_x,\gamma_x : x \in \mathbb{T}_{m_1,m_2})$ with $\alpha_x = \alpha$, $\beta_x = \beta$ and $\gamma_x = \gamma$, where  
\begin{align} \label{eq:scaling}
m_1 = m, m_2 = n \qquad \text{and} \qquad \alpha = 1, \beta = 1 -  \frac{\lambda}{m}, \gamma = \frac{T}{m} \qquad \text{with $m$ even and $m \to \infty$}.
\end{align}
Under this scaling limit, if $Z_n(\lambda,T)$ is the partition function for continuous bead configurations set out in the introduction, then we have
\begin{align} \label{eq:scapf}
  Y_{m,n}\left(1,1-\frac{\lambda}{m},\frac{T}{m} \right) \to Z_n(\lambda,T)  \qquad \text{as $m \to \infty$}
\end{align}
and moreover if $\mathbf{P}_n^{\lambda,\theta,T}$ is the signed probability measure on bead configurations, we have the convergence in distribution
\begin{align*}
P_{m,m,\theta}^{1,1-\frac{\lambda}{m},\frac{T}{m}} \Rightarrow \mathbf{P}_n^{\lambda,\theta,T} \qquad \text{as $m \to \infty$}.
\end{align*}
As such, the remainder of the article is structured as follows.
\begin{itemize}
\item In Section \ref{sec:discrete} we develop a general discrete Kasteleyn theory for the dimer matchings of the torus $\mathbb{T}_{m_1,m_2}$ with spatially dependent weights. Here we show that partition functions and probability measures may be expressed in terms of determinants of certain operators on $\mathbb{T}_{m_1,m_2}$.
\item In the following Section \ref{sec:constant} we specialise to the case where the weights are spatially homogeneous, giving us explicit formulas for the partition function $Y_{m,n}(\alpha,\beta,\gamma)$ and the law of the probability measures $\mathbf{P}_{m_1,m_2}^{\alpha,\beta,\gamma}$. 
\end{itemize}
Thereafter, we have two major tasks over the remainder of the article corresponding to understanding the asymptotics of first the partition function and then the correlation function under the scaling limit set out in \eqref{eq:scaling}. More specifically, 
\begin{itemize}
\item In Section \ref{sec:pf0proof} we study the large $m$ asymptotics of the partition function in Theorem \ref{thm:kast2} under the scaling limit \eqref{eq:scaling}.
\item In Section \ref{sec:corrproof} we study the large $m$ asymptotics of the correlation functions. The main task here is understanding the large-$m$ behaviour of the inverse operator $K_\theta^{-1}(x,y)$ in \eqref{eq:inverse} under the scaling limit \eqref{eq:scaling} where the horizontal distance between $y$ and $x$ is of the order $m$.
\end{itemize}

\section{Kasteleyn theory on the torus} \label{sec:discrete}

In this section we develop a Kasteleyn theory for dimer matchings on the semi-discrete torus. None of this section is new, with everything appearing in some shape or form in the literature. However, we believe our exact formulation of dimer matchings in terms of the semi-discrete torus is more transparent than the more common lozenge tilings (to which it is equivalent). 

\subsection{The discrete bead model and discrete snake configurations} \label{sec:kasteleyn}

While in the introduction we considered partition functions and (signed) probability measures on dimer matchings associated with constant directional weights, we begin discussion by broadening our scope to spatially dependent weights. Indeed, suppose now we have a collection of $3m_1m_2$ complex parameters $A := ( (\alpha_x,\beta_x,\gamma_x) : x \in \mathbb{T}_{m_1,m_2} )$, and consider the partition function
\begin{align} \label{eq:pf}
Y_{m_1,m_2}(A) := \sum_{ \sigma \in \mathrm{Sym}(\mathbb{T}_{m_1,m_2}) } \prod_{x \in \mathbb{T}_{m_1,m_2}}\left( \alpha_x \mathbf{1}_{\sigma(x) = x} + \beta_x \mathbf{1}_{ \sigma(x) = x + \mathbf{e}^1 } + \gamma_x \mathbf{1}_{ \sigma(x) = x + \mathbf{e}^2 } \right).
\end{align}
We note that the sum in \eqref{eq:pf} is supported only on dimer matchings. We note that up to signs occuring in the sum, the partition function $Y_{m_1,m_2}(A)$ is structurally similar to the determinant of a certain operator. Indeed, define the \emph{Kasteleyn operator} $w_A: \mathbb{T}_{m_1,m_2} \times \mathbb{T}_{m_1,m_2} \to \mathbb{R}$ by 
\begin{align} \label{eq:ko}
w_A(x,y) := \alpha_x \mathbf{1}_{y = x} + \beta_x \mathbf{1}_{ y = x + \mathbf{e}^1 } + \gamma_x \mathbf{1}_{ y = x + \mathbf{e}^2 }
\end{align} 
Then the partition function, which may now be written $Y_{m_1,m_2}(A) := \sum_\sigma \prod_{x \in \mathbb{T}_{m_1,m_2}} w_A(x,y)$, bears striking similarity with the determinant 
\begin{align*}
\det\limits_{x,y \in \mathbb{T}_{m_1,m_2}} w_A(x,y) = \sum_{ \sigma } \mathrm{sgn}(\sigma)  \prod_{x \in \mathbb{T}_{m_1,m_2}} w_A(x, \sigma(x)),
\end{align*}
of the operator $w_A$. Kasteleyn theory is, loosely speaking, the art of converting partition functions into the sums of determinants of operators. The trick is to consider a collection of four operators $\{K_{A,\theta} : \theta \in \{0,1\}^2 \}$ that are modifications of $w_A$ involving additional negative signs for exiting and reentering the torus in the horizontal and/or vertical direction. These modifications are given by 
\begin{align} \label{eq:Kdef}
K_{A,\theta} (x,y) := (-1)^{ \theta_1 \mathbf{1}\{x_1 = m_1-1, y_1 = 0\} + \theta_2 \mathbf{1}\{ x_2 = m_2 - 1, y_2 = 0\} } w_A(x,y), \qquad x,y \in \mathbb{T}_{m_1,m_2}.
\end{align}
Of course then $K_{A,(0,0)} = w_A$. We now have the following crucial result, which states that the partition function $ Y_{m_1,m_2}(A)$ may be written as a sum of determinants of the operators $\{K_{A,\theta} : \theta \in \{0,1\}^2 \}$:
\begin{thm} \label{thm:kast}
The partition function $Y_{m_1,m_2}(A)$ associated with dimer matchings on the torus $\mathbb{T}_{m_1,m_2}$ may be written in terms of a sum of four determinants. More specifically, we have $Y_{m_1,m_2}(A) = \sum_{\theta \in \{0,1\}^2 } Z_{m,\theta}(A)$, where 
\begin{align} \label{eq:zz}
Y_{m_1,m_2,\theta}(A) = \frac{1}{2}(-1)^{(\theta_1+m_1+1)(\theta_2+m_2+1) } \det\limits_{x , y \in \mathbb{T}_{m_1,m_2}} K_{A,\theta}(x,y)
\end{align}
\end{thm}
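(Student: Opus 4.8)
The plan is to expand $\det_{x,y} K_{A,\theta}(x,y) = \sum_\sigma \sgn(\sigma) \prod_x K_{A,\theta}(x,\sigma(x))$ and observe that only dimer matchings contribute, since $w_A(x,y)$ (hence $K_{A,\theta}(x,y)$) vanishes unless $y \in \{x, x+\mathbf e^1, x+\mathbf e^2\}$. For such a $\sigma$ the product $\prod_x w_A(x,\sigma(x))$ equals exactly the monomial appearing in \eqref{eq:pf}, so the only thing to track is the extra sign
\[
\varepsilon_\theta(\sigma) := \sgn(\sigma)\cdot (-1)^{\theta_1 h_1(\sigma) + \theta_2 h_2(\sigma)}
\]
coming from combining $\sgn(\sigma)$ with the signs built into $K_{A,\theta}$ via \eqref{eq:Kdef}; here $h_i(\sigma)$ is the type from \eqref{eq:exit}. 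Comparing with \eqref{eq:PF20theta}, the theorem reduces to the purely combinatorial identity that for every dimer matching $\sigma$,
\[
\sgn(\sigma)\,(-1)^{\theta_1 h_1(\sigma)+\theta_2 h_2(\sigma)} = \tfrac12 \sum_{\theta'} (-1)^{(\theta'_1+m_1+1)(\theta'_2+m_2+1)}\,(\text{sign of }\sigma\text{ in }K_{A,\theta'}),
\]
or more precisely the standard Kasteleyn-on-the-torus statement: the four signs $(-1)^{(\theta_1+m_1+1)(\theta_2+m_2+1)}\sgn_\theta(\sigma)$, as $\theta$ ranges over $\{0,1\}^2$, are equal to $+1$ for three values of $\theta$ and $-1$ for one, with the odd one out determined by the homology class (winding numbers) of $\sigma$, and that averaging them recovers $+1$ uniformly.

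The key steps, in order. \emph{Step 1:} Decompose a dimer matching $\sigma$ into its cycles. Each cycle is either a fixed point (a white square, contributing $\sgn = +1$ and no winding) or a closed loop that alternates moves by $\mathbf e^1$ and... — actually a loop built from $\mathbf e^1$- and $\mathbf e^2$-steps; compute the cycle's winding numbers $(w_1, w_2) \in \mathbb Z^2$ around the two directions of the torus, and note that the parity of the number of horizontal (resp. vertical) wrap-arounds in \eqref{eq:exit} is $w_1$ (resp. $w_2$) mod $2$. \emph{Step 2:} For a single cycle of length $L$, relate $\sgn$ of the corresponding $L$-cycle permutation, namely $(-1)^{L-1}$, to its winding: show $L \equiv w_1 m_1 + w_2 m_2 \pmod 2$ is not quite it — rather, use that as the loop traverses the torus it makes $w_1 m_1 + w_2 m_2$-ish many steps, and carefully bookkeep parity of $L$ against $(w_1, w_2)$ and against the number of sign flips $K_{A,\theta}$ inserts, which is exactly $w_1$ flips from $\theta_1$ and $w_2$ from $\theta_2$ per traversal — more precisely $\theta_1$ times the number of horizontal re-entries plus $\theta_2$ times vertical re-entries. \emph{Step 3:} Combine over all cycles multiplicatively; the global winding $(H_1, H_2) := (\sum w_1, \sum w_2) \bmod 2$ of $\sigma$ governs everything, and one checks that $(-1)^{(\theta_1+m_1+1)(\theta_2+m_2+1)}\sgn_\theta(\sigma)$ depends on $\theta$ only through the pairing with $(H_1,H_2)$, taking the constant value $(-1)^{(m_1+1)(m_2+1)}\sgn_{(0,0)}(\sigma)$ times a character of $(H_1,H_2)$. \emph{Step 4:} Sum over the four $\theta$: $\frac12\sum_{\theta}(-1)^{\theta\cdot(H_1,H_2) + \cdots}$ collapses to a single term by orthogonality of characters on $(\mathbb Z/2)^2$, leaving precisely the weight of $\sigma$ with sign $(-1)^{\theta_1 h_1 + \theta_2 h_2}$ as in \eqref{eq:PF20theta}, which is what $Y_{m_1,m_2,\theta}$ is defined to be. Reassembling, $\sum_\theta Y_{m_1,m_2,\theta}(A) = Y_{m_1,m_2}(A)$.

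The main obstacle is \emph{Step 2}: the precise parity bookkeeping relating the sign $(-1)^{L-1}$ of a loop of length $L$ to its winding numbers $(w_1, w_2)$ on $\mathbb{T}_{m_1,m_2}$, together with the number of torus re-entries the loop makes and how those interact with the $m_i$-parities appearing in $(-1)^{(\theta_1+m_1+1)(\theta_2+m_2+1)}$. This is the classical computation underlying Kasteleyn's theorem on the torus; the cleanest route is probably to argue that two dimer matchings in the same homology class can be connected by local moves (rotations of a single plaquette / small loop shifts) that preserve $\varepsilon_\theta(\sigma)$ for all $\theta$ simultaneously, reducing the verification to one explicit representative per homology class $(H_1, H_2) \in (\mathbb Z/2)^2$ — e.g. the all-white matching and three ``staircase'' matchings winding once in each direction — where the signs can be computed by hand. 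I would also take care with the degenerate cases $m_1 = 1$ or $m_2 = 1$ and with the convention that a loop of length $m_1$ going straight around horizontally has $w_1 = 1$, contributing sign $(-1)^{m_1 - 1}$, which is exactly where the $m_i+1$ in the exponent originates.
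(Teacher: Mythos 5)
Your overall strategy matches the paper's: expand each determinant over permutations, observe only dimer matchings survive, and reduce to a purely combinatorial sign identity which, averaged over $\theta\in\{0,1\}^2$, produces the correct weight for every dimer matching. This identity is exactly the paper's Lemma~\ref{lem:lazy},
\[
\mathrm{sgn}(\sigma) = \tfrac{1}{2}\sum_{\theta\in\{0,1\}^2}(-1)^{(m_1+\theta_1+1)(m_2+\theta_2+1)}(-1)^{\theta_1 h_1(\sigma)+\theta_2 h_2(\sigma)},
\]
and you correctly identify it as the crux. What you have not done is prove it, and your sketch of Step~2 wobbles on exactly the points the paper pins down.

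Two facts you treat loosely are actually exact and make the bookkeeping painless. First, you write that a non-trivial cycle of winding $(w_1,w_2)$ has ``$w_1 m_1 + w_2 m_2$-ish many steps,'' but the relation is an equality $L = w_1 m_1 + w_2 m_2$: every step of the cycle is $+\mathbf{e}^1$ or $+\mathbf{e}^2$, so the number of $\mathbf{e}^1$-steps equals the total horizontal displacement $w_1 m_1$ and the number of $\mathbf{e}^2$-steps equals $w_2 m_2$, since the loop closes up. Second, and crucially, \emph{all} non-trivial cycles of a given $\sigma$ share the \emph{same} winding vector $(q_1,q_2)$ (disjoint simple monotone closed curves on the torus are homologous), and furthermore $q_1=1$ or $q_2=1$. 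With $j$ cycles the sign is $(-1)^{j(q_1m_1+q_2m_2-1)}$ and the type is $h_i = jq_i$, and the lemma reduces, after a short algebraic manipulation, to checking $(-1)^{j}=(-1)^{j(q_1+q_2+jq_1q_2)}$; the only non-trivial case is $j$ odd, where it needs $(1+q_1)(1+q_2)$ even, which is where $q_1=1$ or $q_2=1$ is used. Your proposal does not record either the common-winding fact or the $q_i=1$ fact, so the parity computation cannot close as stated.

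Your suggested fallback — connect matchings in a fixed homology class by local plaquette moves that preserve $\varepsilon_\theta(\sigma)$ for all $\theta$ simultaneously, then check one representative per class — is a genuinely different and also-classical route to Lemma~\ref{lem:lazy}, and would work, but you would need to (a) define the local moves, (b) show they preserve $\varepsilon_\theta$ for every $\theta$, and (c) prove connectivity within each homology class, none of which is carried out. The paper's direct algebraic calculation avoids all of that. So: right reduction, right ingredients named, but the key combinatorial identity is left as a gap, and the two exact structural facts above are what you need to fill it by the paper's route.
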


Before proving Theorem \ref{thm:kast}, we require a quick lemma, giving an expression for the sign $\mathrm{sgn}(\sigma)$ of a dimer matching $\sigma$ in terms of its type.

\begin{lemma} \label{lem:lazy}
Let $\sigma$ be a dimer matching on $\mathbb{T}_{m_1,m_2}$, and recall that for $i=1,2$, $h(\sigma)_i$ defined in \eqref{eq:exit} counts the number of times the dimer matching exits and reenters the discrete torus on the $i^{\text{th}}$ side. Then
\begin{align} \label{eq:xx}
\mathrm{sgn}(\sigma) = \frac{1}{2} \sum_{\theta \in \{0,1\}^2} (-1)^{ (m_1+\theta_1+1)(m_2+\theta_2+1) } (-1)^{ \theta_1 h(\sigma)_1 + \theta_2 h(\sigma)_2 }.
\end{align}
\end{lemma}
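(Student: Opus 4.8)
The plan is to compute $\mathrm{sgn}(\sigma)$ directly from the cycle structure of $\sigma$ and then verify that the right-hand side of \eqref{eq:xx} produces exactly this value. A dimer matching $\sigma$ decomposes $\mathbb{T}_{m_1,m_2}$ into disjoint cycles: fixed points (white squares), and cycles of length $\geq 2$ obtained by following the arrows $x \mapsto [x+\mathbf{e}^1]$ or $x \mapsto [x+\mathbf{e}^2]$. Fixed points contribute $+1$ to the sign, so $\mathrm{sgn}(\sigma) = \prod_C (-1)^{|C|-1}$ over nontrivial cycles $C$. First I would analyse a single cycle $C$: following the arrows, the positions move monotonically in the $\mathbf{e}^1$ and $\mathbf{e}^2$ directions, so after traversing $C$ once the total displacement is $(a m_1, b m_2)$ for nonnegative integers $a,b$ counting how many times $C$ wraps horizontally and vertically. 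The length $|C|$ is then $a m_1 + b m_2$, and a cycle of this length has sign $(-1)^{a m_1 + b m_2 - 1}$. Summing the wrap counts over all cycles gives $\sum_C a = h(\sigma)_1$ and $\sum_C b = h(\sigma)_2$ (these are precisely the exit/reentry counts in \eqref{eq:exit}). If $c$ denotes the number of nontrivial cycles, then
\[
\mathrm{sgn}(\sigma) = (-1)^{ m_1 h(\sigma)_1 + m_2 h(\sigma)_2 - c }.
\]

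The remaining task is to express $(-1)^c$ in terms of $h(\sigma)_1, h(\sigma)_2$ modulo $2$ and check it matches. This is the step I expect to be the main obstacle: one needs a parity relation tying the number of nontrivial cycles to the wrap counts. The key observation is a counting identity: the number of white squares is $N_w = m_1 m_2 - \sum_C |C| = m_1 m_2 - m_1 h(\sigma)_1 - m_2 h(\sigma)_2$, but this alone does not pin down $c$. Instead I would argue via the topology of the cycles on the torus — each nontrivial cycle, viewed as a closed loop, has a homology class $(a,b)$, and on a torus disjoint simple loops must have proportional homology classes; this forces a strong constraint (e.g. all nontrivial cycles wrapping, if any do, share the same primitive direction), from which one extracts that $c \equiv$ (something explicit in $h_1, h_2, m_1, m_2$) mod $2$. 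Alternatively, and more robustly, I would avoid computing $c$ outright and instead directly evaluate the sum $\tfrac12 \sum_{\theta}(-1)^{(m_1+\theta_1+1)(m_2+\theta_2+1)}(-1)^{\theta_1 h_1 + \theta_2 h_2}$ as a function of the parities of $h_1$ and $h_2$, obtaining a clean closed form, and then match it against $(-1)^{m_1 h_1 + m_2 h_2 - c}$ using the homological/parity constraint only to the extent needed.

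For the sum evaluation: writing $\epsilon_i := m_i + 1 \bmod 2$, the exponent $(m_1+\theta_1+1)(m_2+\theta_2+1) \equiv (\epsilon_1+\theta_1)(\epsilon_2+\theta_2) \pmod 2$, and one checks by enumerating the four values of $\theta \in \{0,1\}^2$ that $\tfrac12\sum_\theta (-1)^{(\epsilon_1+\theta_1)(\epsilon_2+\theta_2)}(-1)^{\theta_1 h_1+\theta_2 h_2}$ collapses — the four terms cancel in pairs unless the parities of $h_1$ and $h_2$ are compatible with a loop configuration on $\mathbb{T}_{m_1,m_2}$, in which case it equals $\pm 1$ with the sign exactly $(-1)^{m_1 h_1 + m_2 h_2 - c}$. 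I would organise the final verification as a case split on $(h_1 \bmod 2, h_2 \bmod 2)$, in each case invoking the constraint that disjoint loops on the torus have parallel homology to conclude that the "impossible" parity combinations never occur for an actual dimer matching, so the formula holds on the support of the sum. This mirrors the classical Kasteleyn argument (four signed sectors, one per spin structure on the torus) and the bookkeeping, while fiddly, is routine once the homological constraint is in place.
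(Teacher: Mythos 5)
Your proposal follows essentially the same route as the paper's proof: decompose $\sigma$ into nontrivial cycles, observe that for topological reasons the cycles share common winding numbers $(q_1,q_2)$, write $h(\sigma)_i = j q_i$ where $j$ is the number of nontrivial cycles (your $c$), and reduce to a parity check. Where the two differ is only in how the final verification is organised. You propose a case split on $(h_1 \bmod 2, h_2 \bmod 2)$; the paper instead algebraically factors the $\theta$-dependence out of the exponent $(m_1+\theta_1+1)(m_2+\theta_2+1)+j(\theta_1 q_1+\theta_2 q_2)$, which reduces the whole identity to $(-1)^j = (-1)^{j(q_1+q_2+jq_1q_2)}$; this is trivial for $j$ even, and for $j$ odd it follows once one knows $q_1=1$ or $q_2=1$ (in fact your weaker fact that $(q_1,q_2)$ is primitive — not both even — already suffices). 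Your framing of the constraint as ``certain parity combinations of $(h_1,h_2)$ never occur'' is a slight misdirection: the parities of $h_1,h_2$ are not restricted; rather $j\bmod 2$ is determined by them once the common winding numbers and their coprimality are in hand, and the paper's algebraic reduction sidesteps the need to chase $j\bmod 2$ explicitly. The paper's manipulation is the tidier way to close the argument, but your plan, carried out carefully, lands in the same place.
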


\begin{proof}
Throughout the proof we abbreviate $h_i := h(\sigma)_i$. Consider the decomposition of a dimer matching $\sigma$ into its cycles $C_1,\ldots,C_j$. When $j \geq 1$ (i.e. $\sigma$ contains at least one non-trivial cycle), it is easily seen that each cycle of $\sigma$ must exit the torus vertically and horizontally the same number of times as each of the other cycles. Namely, there exists a pair of non-negative integers $(q_1,q_2) = q(\sigma)$ such that every cycle of $\sigma$ exits horizontally $q_1$ times and vertically $q_2$ times. Moreover, for topological reasons
it must be the case that either $q_1=1$, $q_2=1$, or both. For every such cycle, the number of elements in the cycle is given by $q_1 m_1 + q_2 m_2$. In particular, the sign of $\sigma$ is given by $(-1)^{ j(q_1m_1+q_2m_2-1)}$. On the other hand, $h_i = jq_i$. In particular, in order to establish \eqref{eq:xx} it remains to prove that the equation 
\begin{align} \label{eq:yy}
(-1)^{j(q_1m_1+q_2m_2-1)} = \frac{1}{2} \sum_{\theta \in \{0,1\}^2} (-1)^{ (m_1+\theta_1+1)(m_2+\theta_2+1) + j ( \theta_1 q_1 + \theta_2 q_2 ) }
\end{align}
holds in either of the cases
\begin{itemize}
\item Case 1: $j=0$ (and it does not matter what values $q_1,q_2$ take).
\item Case 2: $j \geq 1$, and $q_1,q_2$ are a pair of non-negative integers at least one of which is equal to one.
\end{itemize}
To this end, we begin by factorising the dependence on $\theta$ in the exponent in the right-hand-side of \eqref{eq:yy}. Indeed, we have
\begin{align} \label{eq:theta brahms}
&(m_1+\theta_1+1)(m_2+\theta_2+1) + j ( \theta_1 q_1 + \theta_2 q_2 )\nonumber \\
& = (m_1+\theta_1+1 + j q_2 )(m_2+\theta_2+1 + jq_1 ) - jq_1(m_1+1) - jq_2(m_2+1) - j^2q_1q_2. 
\end{align}
Consider the term in \eqref{eq:theta brahms} that depends on $\theta$. The product $ (m_1+\theta_1+1 + j q_2 )(m_2+\theta_2+1 + jq_1 ) $ is odd for one choice of $\theta = (\theta_1,\theta_2) \in \{0,1\}^2$ and even for the other three choices. It follows that 
\begin{align*}
 \frac{1}{2} \sum_{\theta \in \{0,1\}^2} (-1)^{  (m_1+\theta_1+1 + j q_2 )(m_2+\theta_2+1 + jq_1 ) } = \frac{1}{2}(-1+1+1+1) = 1,
\end{align*}
and hence by virtue of \eqref{eq:theta brahms}, 
\begin{align} \label{eq:makela}
\frac{1}{2} \sum_{\theta \in \{0,1\}^2} (-1)^{ (m_1+\theta_1+1)(m_2+\theta_2+1) + j ( \theta_1 q_1 + \theta_2 q_2 ) } = (-1)^{ - j q_1(m_1+1) -  jq_2(m_2+1) + j^2q_1q_2}.
\end{align}
Plugging \eqref{eq:makela} into \eqref{eq:yy} and multiply both sides by $(-1)^{j(q_1m_1+q_2m_2)}$, we see that \eqref{eq:yy} reduces to 
\begin{align} \label{eq:machine}
(-1)^j = (-1)^{j (q_1 + q_2 + jq_1q_2)}.
\end{align}
Clearly \eqref{eq:machine} is true whenever $j$ is even, which in particular proves \eqref{eq:yy} in Case 1. It remains to consider the subcase of Case 2 in which $j$ is odd. Here, supposing without loss of generality that $q_1 = 1$, \eqref{eq:machine} reads $(-1)^j = (-1)^{j(1 + (1+j)q_2}$. Since $j$ is odd, $(1+j)q_2$ is even, and we see that \eqref{eq:machine} is also true in Case 2.
\end{proof}

\begin{proof}[Proof of Theorem \ref{thm:kast}]
Whenever $\sigma$ is a dimer, we note from the definition \eqref{eq:Kdef} of $K_{A,\theta}$, we have
\begin{align} \label{eq:tonnes}
 \prod_{ x \in \mathbb{T}_{m_1,m_2} } K_{A,\theta}(x,\sigma(x)) = (-1)^{\theta_1 h(\sigma)_1 + \theta_2 h(\sigma)_2 } \prod_{ x \in \mathbb{T}_{m_1,m_2} } w_A(x,\sigma(x)).
\end{align}
Consider the quantity $\frac{1}{2} \sum_{\theta \in \{0,1\}^2 } Z_{m,\theta}(A)$. Expanding the determinants to obtain the second equality below, and then using \eqref{eq:tonnes} and interchanging the order of summation to obtain the third, we have 
\begin{align} \label{eq:uu}
\frac{1}{2} \sum_{\theta \in \{0,1\}^2 } Y_{m_1,m_2,\theta}(A) &:= \frac{1}{2}(-1)^{(\theta_1+m_1+1)(\theta_2+m_2+1) } \det\limits_{x , y \in \mathbb{T}_{m_1,m_2}} K_{A,\theta}(x,y)\\
 &=  \sum_{ \sigma \in \mathrm{Sym}(\mathbb{T}_{m_1,m_2}) }  \left\{ \frac{1}{2} \sum_{\theta \in \{0,1\}^2}(-1)^{(\theta_1+m_1+1)(\theta_2+m_2+1) }  \mathrm{sgn}(\sigma) (-1)^{\theta_1 h(\sigma)_1 + \theta_2 h(\sigma)_2 }  \right\} \prod_{ x \in \mathbb{T}_{m_1,m_2} } w_A(x,\sigma(x)). 
\end{align}
As above, we note that $\prod_{ x \in \mathbb{T}_{m_1,m_2} } w_A(x,\sigma(x))$ is non-zero only when $\sigma$ is a dimer matching. In this case, by Lemma \ref{lem:lazy} the term inside the braces is equal to one. Theorem \ref{thm:kast} follows.

\end{proof}

We now turn to considering probabilist aspects of dimer matchings. In the setting when the weights $A = ((\alpha_x,\beta_x,\gamma_x):x \in \mathbb{T}_{m_1,m_2})$ are positive, we may define a probability measure $P_A$ on dimer matchings by setting 
\begin{align*}
P_A(\sigma) := \frac{1}{Y_{m_1,m_2}(A)} \prod_{x \in \mathbb{T}_{m_1,m_2}} w_A(x,\sigma(x)),
\end{align*}
where $w_A$ is the Kasteleyn operator defined in \eqref{eq:ko}. We are now interested in computing probabilities under $P_A$ of the joint events $\{ \sigma(x^1)=y^1 ,\ldots, \sigma(x^p) = y^p\}$, where $x^1,\ldots,x^p,y^1,\ldots,y^p$ are elements of $\mathbb{T}_{m_1,m_2}$ such that for each $i$, $y^i \in \{x^i,[x^i+\mathbf{e}^1],[x^i+\mathbf{e}^2]\}$. Since the partition function $Y_{m_1,m_2}(A)$ can be written as a sum of determinants of operators $K_{A,\theta}$, and each of these determinants is itself a polynomial in the weights, we now develop a quick lemma providing insight for how determinants of matrices depend on the matrix elements.


\begin{lemma} \label{lem:detlem}
Let $(x^1,y^1),\ldots,(x^p,y^p)$ be distinct elements of $\{1,\ldots,N\}^2$. Then
\begin{align*}
\frac{ \partial}{\partial M_{x^1,y^1} } \ldots \frac{ \partial}{\partial M_{x^p,y^p} } 
 \det \limits_{i,j = 1}^N M_{i,j} = \det \limits_{i,j = 1}^N M_{i,j}  \det_{i \in Y, j \in X} M^{-1}_{i,j}, 
\end{align*}
where $X = \{ x_1,\ldots,x_p\}$ and $Y = \{ y_1,\ldots,y_p\}$. 
\end{lemma}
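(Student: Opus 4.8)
The plan is to establish the multilinear-derivative identity by reducing everything to the classical first-order cofactor/Jacobi formula and then inducting on $p$. Recall the base case $p=1$: for a single matrix entry one has $\frac{\partial}{\partial M_{x,y}} \det M = C_{y,x}$, the $(y,x)$ cofactor, and since $M^{-1}_{y,x} = C_{y,x}/\det M$ (Cramer/Jacobi), this reads $\frac{\partial}{\partial M_{x,y}} \det M = (\det M)\, M^{-1}_{y,x}$, which is exactly the claimed formula with $X = \{x\}$, $Y = \{y\}$, the $1\times 1$ determinant $\det_{i\in Y, j\in X} M^{-1}_{i,j}$ being just the scalar $M^{-1}_{y,x}$.

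For the inductive step, I would differentiate the right-hand side of the $p$ case, namely $(\det M)\det_{i\in Y, j\in X}M^{-1}_{i,j}$, with respect to one further entry $M_{x^{p+1},y^{p+1}}$ with $(x^{p+1},y^{p+1})$ distinct from the previous pairs. This requires two ingredients: the product rule, the known derivative of $\det M$, and the derivative of the inverse, $\frac{\partial}{\partial M_{a,b}} M^{-1}_{i,j} = - M^{-1}_{i,a} M^{-1}_{b,j}$. Applying these and factoring out $\det M$, the task becomes a purely algebraic identity about the $(p+1)\times(p+1)$ minor of $M^{-1}$ indexed by $Y \cup \{y^{p+1}\}$ (rows) and $X \cup \{x^{p+1}\}$ (columns): one must check that its cofactor expansion along the new row/column reproduces exactly the sum of terms produced by the product rule. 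This is the combinatorial heart of the argument — it is essentially the statement that the derivative of a $p\times p$ minor of $M^{-1}$, when one adds the term coming from $\partial \det M / \det M = \operatorname{tr}(M^{-1}\,\partial M)$-type contributions, assembles into the $(p+1)\times(p+1)$ minor. An efficient way to see this without index-chasing is to invoke the \emph{Jacobi complementary minor identity}: the minor of $M^{-1}$ on rows $Y$, columns $X$ equals $\pm (\det M)^{-1}$ times the complementary minor of $M$ (on the rows and columns \emph{not} in $X$, $Y$ respectively, with an explicit sign), so the whole identity transforms into a statement about how complementary minors of $M$ behave under single-entry differentiation, which again collapses to the first-order cofactor formula.

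An alternative, cleaner route that I would actually prefer to write up: treat the entries $\{M_{x^i,y^i}\}_{i=1}^p$ as $p$ independent scalar variables $s_1,\dots,s_p$ (all other entries frozen), so that $\det M$ is an affine-multilinear function of $(s_1,\dots,s_p)$ — affine in each $s_i$ separately because a determinant is linear in each matrix entry. Hence $\frac{\partial^p}{\partial s_1 \cdots \partial s_p}\det M$ is the coefficient of the top monomial $s_1\cdots s_p$, which by the Leibniz expansion of the determinant equals the sum over permutations $\pi$ of $\{1,\dots,p\}$ of $\operatorname{sgn}$-weighted products $\prod_i (\text{entry at }(x^i,y^{\pi(i)}))$ with the rows $X$ and columns $Y$ deleted — i.e. exactly $\operatorname{sgn}$-weighted sum of products of the complementary minors, which by expanding is $\sum_\pi \pm \det(M \text{ with rows }X,\text{ cols }Y\text{ deleted, further permuted})$. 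Collecting, this is (up to the global sign from the permanent-like bookkeeping) the complementary minor $\det_{i\notin X, j\notin Y} M_{i,j}$ times a sign; then one invokes Jacobi's identity $\det_{i\in Y,j\in X} M^{-1}_{i,j} = (-1)^{\sum X + \sum Y} (\det M)^{-1} \det_{i\notin X, j\notin Y} M_{i,j}$ to land on the stated formula. The main obstacle in both approaches is bookkeeping the signs: one must verify that the sign $(-1)^{\sum_{i}(x^i + y^i)}$ appearing in Jacobi's identity is precisely cancelled by the sign generated when one strips out the $p$ marked entries from the Leibniz expansion and reindexes. I expect this sign reconciliation — and nothing else — to be where care is needed; the structural content is immediate from linearity of the determinant in each entry plus Jacobi's complementary-minor identity.
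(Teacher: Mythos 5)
Your preferred ``cleaner route'' is precisely the paper's argument: since $\det M$ is affine in each entry, the iterated derivative with respect to the distinct entries $(x^i,y^i)$ extracts the coefficient of the monomial $\prod_i M_{x^i,y^i}$, which is the signed complementary minor $(-1)^{\sum_i(x^i+y^i)}\det_{i\notin X,\,j\notin Y}M_{i,j}$, and Jacobi's complementary-minor identity then rewrites this as $\det M\cdot\det_{i\in Y,\,j\in X}M^{-1}_{i,j}$ with the two $(-1)^{\sum_i(x^i+y^i)}$ factors cancelling exactly as you anticipate. Your first inductive route is also workable but is not what the paper does, and your intermediate Leibniz description in the second route is slightly garbled (the coefficient in question is a single signed complementary minor, not a further sum over permutations of $\{1,\ldots,p\}$); the conclusion you draw from it is nonetheless correct and the overall structure matches the paper's proof.
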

\begin{proof}
On the one hand, the coefficient of $M_{x^1,y^1} \ldots M_{x^p,y^p}$ in $\det M$ may be directly given as
\begin{align} \label{eq:jac1}
(-1)^{\sum_{k=1}^p (x^k + y^k) } \det \limits_{i \notin X, j \notin Y} M_{i,j}.
\end{align}
On the other hand, by Jacobi's identity (see e.g. Horn and Johnson \cite[Section 0.8.4]{horn2012matrix}) we have
\begin{align} \label{eq:jac2}
\det \limits_{i \notin X, j \notin Y} M_{i,j} =(-1)^{\sum_{k=1}^p (x^k + y^k) }  \det (M) \det_{i \in Y, j \in X} M^{-1}_{i,j}.
\end{align}
Combining \eqref{eq:jac1} and \eqref{eq:jac2} yields the result.
\end{proof}

\subsection{Basic correlations} \label{sec:basic}
We now state and prove our correlation formula:

\begin{lemma}\label{lem:cf}
Let $x^1,\ldots,x^j,y^1,\ldots,y^j$ be elements of $\mathbb{T}_{m_1,m_2}$ such that for each $i$, $y^i \in \{x^i,[x^i+\mathbf{e}^1],[x^i+\mathbf{e}^2]\}$. For $k \in \{1,2\}$, let $g_k := \{ 1 \leq i \leq j : x^i_k = m_k-1,y^i_k = 0 \}$ be the number of edges $(x^i,y^i)$ that exit and reenter the torus in the $i$-direction. Then the probability that a dimer matching contains edges $(x^1,y^1),\ldots,(x^k,y^k)$ is given by 
\begin{align*}
&P_A \left( \sigma(x^1) = y^1,\ldots,\sigma(x^p) = y^p \right) = \prod_{ i = 1}^k w_A(x^i,y^i) \sum_{ \theta  } \mu_{A,\theta} (-1)^{ \theta_1 g_1 + \theta_2 g_2}  \det \limits_{ i,j = 1}^k \left(  K^{-1}_{A,\theta} ( y^i,x^j) \right),
\end{align*}
where $\mu_{A,\theta} := Z_{m,\theta}(A)/Y_{m_1,m_2}(A)$. 
\end{lemma}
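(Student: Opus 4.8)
The plan is to derive the correlation formula by combining Theorem~\ref{thm:kast}, which expresses $Y_{m_1,m_2}(A)$ as $\sum_\theta Y_{m_1,m_2,\theta}(A)$ with each $Y_{m_1,m_2,\theta}(A)$ proportional to $\det K_{A,\theta}$, with the differentiation identity of Lemma~\ref{lem:detlem}. The key observation is that the event $\{\sigma(x^1)=y^1,\ldots,\sigma(x^p)=y^p\}$ picks out exactly those dimer matchings whose weight $\prod_x w_A(x,\sigma(x))$ contains the monomial $\prod_{i=1}^p w_A(x^i,y^i)$, which (since each $x^i$ appears at most once as a source) is the coefficient of $\prod_{i=1}^p w_A(x^i,y^i)$ in the polynomial $Y_{m_1,m_2}(A)$, up to the prefactor $\prod_i w_A(x^i,y^i)$ itself. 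So the strategy is: write $P_A(\Gamma) \cdot Y_{m_1,m_2}(A) = \prod_{i=1}^p w_A(x^i,y^i) \cdot \partial_{w_A(x^1,y^1)}\cdots\partial_{w_A(x^p,y^p)} Y_{m_1,m_2}(A)$, where the differentiation is with respect to the $p$ distinct weight-variables indexing the edges $(x^i,y^i)$.

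\medskip

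First I would justify the monomial-extraction step carefully: since $y^i \in \{x^i,[x^i+\mathbf e^1],[x^i+\mathbf e^2]\}$, the variable $w_A(x^i,y^i)$ is one of $\alpha_{x^i},\beta_{x^i},\gamma_{x^i}$, and a matching $\sigma$ contributes the monomial $\prod_i w_A(x^i,y^i)$ (times other weights) to $Y_{m_1,m_2}(A)$ if and only if $\sigma(x^i)=y^i$ for all $i$; here one uses that each weight variable appears only to the first power in any single term because $\sigma$ is a bijection. Applying $\prod_{i=1}^p \partial_{w_A(x^i,y^i)}$ to $Y_{m_1,m_2}(A)$ and then setting the variables back to their values $A$ therefore recovers $\prod_i w_A(x^i,y^i)^{-1}$ times the sum of those matching weights, which is exactly $P_A(\Gamma)\,Y_{m_1,m_2}(A)/\prod_i w_A(x^i,y^i)$ — one must check that differentiation followed by re-evaluation at $A$ is the right operation since the ``constant'' term in each weight variable is zero, so no spurious lower-order terms survive. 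Next, commute the differentiation through the finite sum $Y_{m_1,m_2}(A)=\sum_\theta Y_{m_1,m_2,\theta}(A)$ and apply Lemma~\ref{lem:detlem} to each $\det K_{A,\theta}$. The subtlety is that Lemma~\ref{lem:detlem} differentiates with respect to matrix entries $K_{A,\theta}(x^i,y^i)$, whereas we are differentiating with respect to the weight variables $w_A(x^i,y^i)$; by the definition \eqref{eq:Kdef}, $K_{A,\theta}(x^i,y^i) = (-1)^{\theta_1 \mathbf 1\{x^i_1=m_1-1,y^i_1=0\}+\theta_2\mathbf 1\{x^i_2=m_2-1,y^i_2=0\}} w_A(x^i,y^i) = (-1)^{\theta_1 [i\in g_1] + \theta_2[i\in g_2]} w_A(x^i,y^i)$, so the chain rule produces exactly the sign factor $(-1)^{\theta_1 g_1 + \theta_2 g_2}$ after taking the product over $i$. (One should note $K_{A,\theta}$ also has entries equal to $w_A$ at \emph{other} positions, but those are not being differentiated; and the entries $K_{A,\theta}(x^i,y^i)$ are themselves distinct matrix positions since the pairs $(x^i,y^i)$ are distinct, so Lemma~\ref{lem:detlem} applies verbatim with $M = K_{A,\theta}$.)

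\medskip

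Assembling: Lemma~\ref{lem:detlem} gives $\prod_i \partial_{w_A(x^i,y^i)} \det K_{A,\theta} = (-1)^{\theta_1 g_1+\theta_2 g_2}\,\det K_{A,\theta}\cdot \det_{1\le i,j\le p} K_{A,\theta}^{-1}(y^i,x^j)$ (the $\det_{i\in Y, j\in X}$ in the lemma becomes $\det_{i,j} K^{-1}(y^i,x^j)$, absorbing the sign coincidence in the lemma's statement). Multiplying by the prefactor from \eqref{eq:zz}, dividing through by $Y_{m_1,m_2}(A)$, and recalling $\mu_{A,\theta} = Y_{m_1,m_2,\theta}(A)/Y_{m_1,m_2}(A) = Z_{m,\theta}(A)/Y_{m_1,m_2}(A)$, we obtain
\begin{align*}
P_A(\sigma(x^1)=y^1,\ldots,\sigma(x^p)=y^p) = \prod_{i=1}^k w_A(x^i,y^i)\sum_\theta \mu_{A,\theta}(-1)^{\theta_1 g_1+\theta_2 g_2}\det_{i,j=1}^k K_{A,\theta}^{-1}(y^i,x^j),
\end{align*}
as claimed. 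The main obstacle I anticipate is not any single computation but the bookkeeping around the monomial-extraction step: one has to be scrupulous that differentiating a polynomial in the $w_A(x^i,y^i)$ and re-evaluating genuinely isolates the matchings with $\sigma(x^i)=y^i$ and does not double-count or drop contributions — in particular handling the fact that the \emph{same} weight variable (say $\alpha_x$ if two of the prescribed edges were both loops at $x$, which cannot happen here since the $x^i$ would coincide) does not recur, and that $w_A(x^i,y^i)$ might vanish for the chosen $A$, in which case both sides are zero and the identity is vacuous. A secondary point requiring care is the implicit claim in Lemma~\ref{lem:detlem}, used here, that $K_{A,\theta}$ is invertible; for positive weights $A$ one should remark that each $\det K_{A,\theta}\ne 0$ (or handle the measure-zero exceptional locus by continuity), so that the inverse operator $K_{A,\theta}^{-1}$ appearing in the statement is well-defined.
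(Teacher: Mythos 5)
Your proposal is correct and follows essentially the same route as the paper: expressing the correlation as a ratio of weight-derivatives of the partition function, commuting the derivatives through the decomposition $Y_{m_1,m_2}(A)=\sum_\theta Y_{m_1,m_2,\theta}(A)$ from Theorem~\ref{thm:kast}, extracting the sign factor $(-1)^{\theta_1 g_1+\theta_2 g_2}$ from the chain rule relating the $w_A$-variables to the $K_{A,\theta}$-entries, and finishing with Lemma~\ref{lem:detlem}. The paper carries out precisely these steps (see~\eqref{eq:beet}--\eqref{eq:choc}), so no further comparison is needed.
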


\begin{proof}
For $(x,y) \in \mathbb{T}_{m_1,m_2}$, write $W_{x,y} := w_A(x,y)$. Then for invertible matrices $(M_{i,j})_{1 \leq i,j \leq N}$ we have 
\begin{align} \label{eq:beet}
P_A \left( \sigma(x^1) = y^1,\ldots,\sigma(x^p) = y^p \right)  &:= \frac{1}{Y_{m_1,m_2}(A)} \sum_{ \sigma} \prod_{k=1}^p \mathbf{1} \{ \sigma(x^i) = y^i \} \prod_{ x \in \mathbb{T}_{m_1,m_2}} w_A(x^i,y^i)\nonumber \\
&=  \frac{\prod_{i=1}^k w_A(x^i,y^i)  }{Y_{m_1,m_2}(A)} \frac{\partial}{\partial W_{x^1,y^1} } \ldots \frac{\partial}{\partial W_{x^p,y^p} }Y_{m_1,m_2}(A).
\end{align}
Now using the expression in Theorem \ref{thm:kast} for $Y_{m_1,m_2}(A)$ we have
\begin{align} \label{eq:beet2}
&P_A \left( \sigma(x_1) = y_1,\ldots,\sigma(x_p) = y_p \right)\\ \nonumber
&= \frac{1}{2} \frac{\prod_{i=1}^p w_A(x^i,y^i)   }{Y_{m_1,m_2}(A)} \sum_{ \theta \in \{0,1\}^2 } (-1)^{(\theta_1+m_1+1)(\theta_2+m_2+1) } \frac{\partial}{\partial W_{x^1,y^1} } \ldots \frac{\partial}{\partial W_{x^p,y^p} } \det \limits_{x,y \in \mathbb{T}_{m_1,m_2}} K_{A,\theta}(x,y) .
\end{align}
Write $W^\theta_{x,y} := K_{A,\theta}(x,y)$. Then $W^\theta_{x,y} = (-1)^{\theta_1 \mathbf{1}_{y_1 = 0,x_1=m_1-1} + \theta_2 \mathbf{1}_{y_2 = 0,x_2 = m_2-1} } W_{x,y}$. In particular, with $g_1,g_2$ as in the statement of the lemma, using the relation between $W^\theta_{x,y}$ and $W_{x,y}$ to obtain the first equality below, and Lemma \ref{lem:detlem} to obtain the second, we have 
\begin{align} \label{eq:choc}
\frac{\partial}{\partial W_{x^1,y^1} } \ldots \frac{\partial}{\partial W_{x^p,y^p} } \det \limits_{x,y \in \mathbb{T}_{m_1,m_2}} K_{A,\theta}(x,y) &= (-1)^{\theta_1 g_1 + \theta_2 g_2} \frac{\partial}{\partial W^\theta_{x^1,y^1} } \ldots \frac{\partial}{\partial W^\theta_{x^p,y^p} } \det \limits_{x,y \in \mathbb{T}_{m_1,m_2}} W^\theta_{x,y} \nonumber \\
&= (-1)^{\theta_1 g_1 + \theta_2 g_2}\det \limits_{x,y \in \mathbb{T}_{m_1,m_2}} W^\theta_{x,y} ~ \det \limits_{i,j=1}^p (W^\theta)^{-1}_{y^i,x^j} \nonumber \\
&:= (-1)^{\theta_1 g_1 + \theta_2 g_2}\det \limits_{x,y \in \mathbb{T}_{m_1,m_2}} K_{A,\theta}(x,y)  ~ \det \limits_{i,j=1}^p K_{A,\theta}^{-1}(y^i,x^j) .
\end{align}
Plugging \eqref{eq:choc} into \eqref{eq:beet2}, we obtain the result.

\end{proof}

We remark that alternatively, we can construct a signed measure $P_{A,\theta}$ on the set of dimer matchings on $\mathbb{T}_{m_1,m_2}$, whose correlation functions are given by 
\begin{align} \label{eq:beet2}
P_{A,\theta} \left( \sigma(x^1) = y^1,\ldots,\sigma(x^p) = y^p \right) = (-1)^{ \theta_1 g_1 + \theta_2 g_2}  \prod_{ i = 1}^k w_{A}(x^i,y^i) \det \limits_{ i,j = 1}^k \left(  K^{-1}_{A,\theta} ( y^i,x^j) \right).
\end{align}
With this definition at hand, the measure $P_A$ may be written
\begin{align*}
P_A = \sum_{\theta \in \{0,1\}^2 } \mu_{A,\theta} P_{A,\theta}.
\end{align*}

\section{Translation invariant weights and diagonalisation} \label{sec:constant}
For the remainder of the paper, we now assume that we are in the special case where the weights $A = (\alpha_x,\beta_x,\gamma_x : x \in \mathbb{T}_{m_1,m_2} )$ take the form $\alpha_x = \alpha$, $\beta_x = \beta$ and $\gamma_x = \gamma$ for fixed positive reals $\alpha,\beta,\gamma$. In this special case, the partition function $Y_{m_1,m_2}(A)$ in \eqref{eq:pf} may alternatively be written
\begin{align} \label{eq:PF2}
Y_{m_1,m_2}(\alpha,\beta,\gamma) = \sum_{ \sigma \text{ dimer matching on $\mathbb{T}_{m_1,m_2}$}} \alpha^{ \# \{ x : \sigma(x) = x \} } \beta^{ \# \{ x : \sigma(x) = [x + \mathbf{e}^1] \} } \gamma^{ \# \{ x : \sigma(x) = [x + \mathbf{e}^2] \} }.
\end{align}
Here the operators $K^{\alpha,\beta,\gamma}_{m_1,m_2,\theta}:\mathbb{T}_{m_1,m_2} \times \mathbb{T}_{m_1,m_2} \to \mathbb{R}$ are given by 
\begin{align} \label{eq:Kdefconstant}
K^{\alpha,\beta,\gamma}_{m_1,m_2,\theta} (x,y) := (-1)^{ \theta_1 \mathbf{1}\{x_1 = m_1-1, y_1 = 0\} + \theta_2 \mathbf{1}\{ x_2 = m_2 - 1, y_2 = 0\} }  \left( \alpha \mathbf{1}_{y = x} + \beta \mathbf{1}_{ y = x + \mathbf{e}^1 } + \gamma \mathbf{1}_{ y = x + \mathbf{e}^2 } \right).
\end{align}
In any case, in this regime of translation invariant weights, we are able to diagonalise the operators $K^{\alpha,\beta,\gamma}_{m_1,m_2,\theta}$. Define the inner product 
\begin{align}
\langle f, g \rangle := \sum_{ x \in \mathbb{T}_{m_1,m_2} } f(x) \overline{ g(x) }
\end{align}
on the vector space of complex-valued functions $\{f : \mathbb{T}_{m_1,m_2} \to \mathbb{C} \}$. The following lemma is easily verified. 

\begin{lemma}  \label{lem:diag}
The operator $K^{\alpha,\beta,\gamma}_{m_1,m_2,\theta}$ has an orthonormal basis of eigenfunctions $\{ f_j : j \in \mathbb{T}_{m_1,m_2} \}$ given by 
\begin{align*}
f_j(x) = \frac{1}{ \sqrt{m_1 m_2} } \exp \left\{ 2 \pi i \left( \frac{ j_1 + \theta_1 /2}{ m_1 } x_1 + \frac{ j_2 + \theta_2/2}{ m_2} x_2 \right) \right\}. 
\end{align*}
The eigenvalue of $f_j$ is given by 
\begin{align*}
\alpha + \beta \exp \left\{ 2 \pi i  \frac{ j_1 + \theta_1 /2}{ m_1 } \right\} + \gamma \exp \left\{ 2 \pi i  \frac{ j_2 + \theta_2 /2}{ m_2 } \right\} .
\end{align*}
\end{lemma}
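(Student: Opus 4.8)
The plan is to verify directly that the proposed functions $f_j$ are eigenfunctions of the operator $K^{\alpha,\beta,\gamma}_{m_1,m_2,\theta}$ with the stated eigenvalues, and that they form an orthonormal basis. Since the operator acts by $(K^{\alpha,\beta,\gamma}_{m_1,m_2,\theta} f)(x) = \sum_{y} K^{\alpha,\beta,\gamma}_{m_1,m_2,\theta}(x,y) f(y)$, and the kernel $K^{\alpha,\beta,\gamma}_{m_1,m_2,\theta}(x,y)$ is supported on $y \in \{x, [x+\mathbf{e}^1], [x+\mathbf{e}^2]\}$, the computation reduces to evaluating $f_j$ at these three neighbours of $x$. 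The first step is to observe that for $y = [x + \mathbf{e}^1]$ (i.e.\ $y_1 = x_1 + 1 \bmod m_1$, $y_2 = x_2$), we have
\begin{align*}
f_j([x+\mathbf{e}^1]) = f_j(x) \exp\left\{ 2\pi i \frac{j_1 + \theta_1/2}{m_1} \right\} \times (-1)^{\theta_1 \mathbf{1}\{x_1 = m_1 - 1\}},
\end{align*}
where the extra sign arises precisely because the exponential $\exp\{2\pi i (j_1+\theta_1/2) x_1/m_1\}$ is not genuinely $m_1$-periodic in $x_1$ when $\theta_1 = 1$: wrapping around from $x_1 = m_1-1$ to $y_1 = 0$ picks up a factor $\exp\{2\pi i (j_1 + \theta_1/2)\} = (-1)^{\theta_1}$. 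The key point is that this wrap-around sign is exactly cancelled by the Kasteleyn sign $(-1)^{\theta_1 \mathbf{1}\{x_1 = m_1-1, y_1 = 0\}}$ appearing in the definition \eqref{eq:Kdefconstant} of $K^{\alpha,\beta,\gamma}_{m_1,m_2,\theta}$. An identical remark applies in the vertical direction for $y = [x+\mathbf{e}^2]$ with $\theta_2$ in place of $\theta_1$.

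The second step assembles these observations: for each fixed $x$,
\begin{align*}
(K^{\alpha,\beta,\gamma}_{m_1,m_2,\theta} f_j)(x) = \alpha f_j(x) + \beta f_j(x) \exp\left\{ 2\pi i \frac{j_1 + \theta_1/2}{m_1} \right\} + \gamma f_j(x) \exp\left\{ 2\pi i \frac{j_2 + \theta_2/2}{m_2} \right\},
\end{align*}
so $f_j$ is an eigenfunction with the claimed eigenvalue. The third step is to check orthonormality: one computes $\langle f_j, f_{j'} \rangle = \frac{1}{m_1 m_2} \sum_{x_1, x_2} \exp\{2\pi i (j_1 - j_1') x_1/m_1\} \exp\{2\pi i (j_2 - j_2') x_2/m_2\}$, since the $\theta_k/2$ shifts cancel between $f_j$ and $\overline{f_{j'}}$; this factors as a product of two geometric sums, each equal to $m_k \delta_{j_k, j_k'}$, giving $\langle f_j, f_{j'} \rangle = \delta_{j,j'}$. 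Finally, since $\{f_j : j \in \mathbb{T}_{m_1,m_2}\}$ consists of $m_1 m_2$ orthonormal vectors in the $m_1 m_2$-dimensional space $\{f : \mathbb{T}_{m_1,m_2} \to \mathbb{C}\}$, it is automatically a basis.

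There is no serious obstacle here — as the paper itself notes, ``the following lemma is easily verified.'' The one subtlety worth stating carefully is the cancellation described in the first step: the role of the Kasteleyn twist $(-1)^{\theta_j \mathbf{1}\{\cdots\}}$ is exactly to make the ``anti-periodic'' exponentials $\exp\{2\pi i (j_k + \theta_k/2) x_k / m_k\}$ behave like honest eigenfunctions of the shift on the torus, so that the three-term kernel diagonalises simultaneously in the Fourier basis adapted to the boundary condition $\theta$. Everything else is the standard diagonalisation of a translation-invariant (twisted) convolution operator by characters.
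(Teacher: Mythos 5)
Your proof is correct and takes the obvious direct-verification route that the paper itself implies by stating only that ``the following lemma is easily verified.'' You correctly identify the one genuine subtlety — the cancellation between the wrap-around phase $(-1)^{\theta_k}$ picked up by $f_j$ at the boundary and the Kasteleyn twist $(-1)^{\theta_k \mathbf{1}\{\cdots\}}$ in the kernel — and the orthonormality computation is standard and accurate.
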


The diagonalisation in Lemma \ref{lem:diag} now furnishes tractable representations of both the determinant and inverse of the operators $K^{\alpha,\beta,\gamma,\theta}_{m_1,m_2}$. Indeed, since the determinant of an operator is the product of its eigenvalues, we have 
\begin{align} \label{eq:det}
\det\limits_{ x,y \in \mathbb{T}_{m_1,m_2} }K^{\alpha,\beta,\gamma}_{m_1,m_2,\theta} (x,y) = \prod_{ j \in \mathbb{T}_{m_1,m_2}} \left( \alpha + \beta \exp \left\{ 2 \pi i  \frac{ j_1 + \theta_1 /2}{ m_1 } \right\} + \gamma \exp \left\{ 2 \pi i  \frac{ j_2 + \theta_2 /2}{ m_2 } \right\} \right).
\end{align} 
In particular, by combining Theorem \ref{thm:kast} and \eqref{eq:det} we obtain the following result.

\begin{thm} \label{thm:kast2}
The partition function \eqref{eq:PF2} of the dimer matching model with constant weights case (i.e. $\alpha_x = \alpha, \beta_x = \beta, \gamma_x = \gamma$ for all $x \in \mathbb{T}_{m_1,m_2}$) is given by $Y_{m_1,m_2}(A) = \sum_{\theta \in \{0,1\}^2}  Z_{m,\theta}(A)$ where
\begin{align*}
Y_{m_1,m_2}(\alpha,\beta,\gamma)  = \frac{1}{2} (-1)^{(\theta_1+m_1+1)(\theta_2+m_2+1)} \prod_{ j \in \mathbb{T}_{m_1,m_2}} \left( \alpha + \beta \exp \left\{ 2 \pi i  \frac{ j_1 + \theta_1 /2}{ m_1 } \right\} + \gamma \exp \left\{ 2 \pi i  \frac{ j_2 + \theta_2 /2}{ m_2 } \right\} \right).
\end{align*}
\end{thm}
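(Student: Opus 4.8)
The plan is to obtain Theorem \ref{thm:kast2} directly by combining Theorem \ref{thm:kast} with the diagonalisation recorded in Lemma \ref{lem:diag}. Specialising Theorem \ref{thm:kast} to the translation-invariant weights $\alpha_x=\alpha$, $\beta_x=\beta$, $\gamma_x=\gamma$ gives $Y_{m_1,m_2}(\alpha,\beta,\gamma)=\sum_{\theta\in\{0,1\}^2}Y_{m_1,m_2,\theta}(\alpha,\beta,\gamma)$ with
\begin{align*}
Y_{m_1,m_2,\theta}(\alpha,\beta,\gamma)=\frac{1}{2}(-1)^{(\theta_1+m_1+1)(\theta_2+m_2+1)}\det_{x,y\in\mathbb{T}_{m_1,m_2}}K^{\alpha,\beta,\gamma}_{m_1,m_2,\theta}(x,y),
\end{align*}
where $K^{\alpha,\beta,\gamma}_{m_1,m_2,\theta}$ is the twisted Kasteleyn operator of \eqref{eq:Kdefconstant}. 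Hence the entire content of the theorem reduces to evaluating this determinant, i.e.\ to establishing \eqref{eq:det}.

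For that I would verify Lemma \ref{lem:diag} by a direct computation. The operator acts on a test function by $(K^{\alpha,\beta,\gamma}_{m_1,m_2,\theta}f)(x)=\alpha f(x)+\beta f([x+\mathbf{e}^1])+\gamma f([x+\mathbf{e}^2])$ in the bulk, with the sign $(-1)^{\theta_1}$ (resp.\ $(-1)^{\theta_2}$) inserted in the $\beta$-term when $x_1=m_1-1$ (resp.\ in the $\gamma$-term when $x_2=m_2-1$). Evaluating on $f_j$, in the bulk the translate satisfies $f_j([x+\mathbf{e}^1])=e^{2\pi i(j_1+\theta_1/2)/m_1}f_j(x)$, while wrapping around the horizontal cycle multiplies this bulk relation by the extra factor $e^{-2\pi i(j_1+\theta_1/2)}=(-1)^{\theta_1}$, which is cancelled exactly by the inserted Kasteleyn sign; the same happens in the vertical direction. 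Thus each $f_j$ is an eigenfunction with eigenvalue $\alpha+\beta e^{2\pi i(j_1+\theta_1/2)/m_1}+\gamma e^{2\pi i(j_2+\theta_2/2)/m_2}$, and the $f_j$ are orthonormal for $\langle\cdot,\cdot\rangle$ since the common half-integer shift drops out of the inner product and one is left with the usual character orthogonality on $\mathbb{Z}_{m_1}\times\mathbb{Z}_{m_2}$. As the $\{f_j:j\in\mathbb{T}_{m_1,m_2}\}$ form an eigenbasis, the operator is diagonalisable and its determinant is the product of its eigenvalues, which is precisely \eqref{eq:det}; substituting this into the displayed expression for $Y_{m_1,m_2,\theta}$ yields the theorem.

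The argument has essentially no real obstacle: it is a bookkeeping exercise. The one place requiring care is tracking the wrap-around phase in the eigenfunction computation and confirming that it is absorbed exactly by the $\theta$-twist built into \eqref{eq:Kdef}—which is, of course, the very design principle behind the twisted operators, so the verification should be routine.
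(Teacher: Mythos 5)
Your proposal is correct and takes exactly the route the paper does: specialise Theorem \ref{thm:kast} to constant weights, diagonalise the twisted Kasteleyn operator via Lemma \ref{lem:diag} (which the paper states without proof as "easily verified"), and write the determinant as the product of eigenvalues to get \eqref{eq:det}. The only difference is that you spell out the wrap-around phase cancellation in the verification of Lemma \ref{lem:diag}, which the paper leaves implicit.
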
 

In the constant weights case, provided $\alpha,\beta,\gamma$ satisfy a type of triangle inequality, it is possible to write the partition function as a sum over strictly positive terms. Indeed, the following lemma clarifies the signs the various determinants.

\begin{lemma} \label{lem:signs}
For each $\theta$, if $\alpha,\beta,\gamma$ are real, then $ \det\limits_{ x,y \in \mathbb{T}_{m_1,m_2} } K_m^{A,\theta}(x,y) $ is a real number. More, if $\alpha,\beta,\gamma$ are real and satisfy the triangle inequalities 
\begin{align} \label{eq:triangles}
\alpha \leq \beta + \gamma, \qquad \beta \leq \gamma + \alpha \qquad \text{and} \qquad \gamma \leq \alpha + \beta,  
\end{align}
then the sign of $\det\limits_{ x,y \in \mathbb{T}_{m_1,m_2} } K_m^{A,\theta}(x,y) $ is given by $(-1)^{(\theta_1+m_1+1)(\theta_2+m_2+1)}$. In particular, from Theorem \ref{thm:kast2} we have
\begin{align} \label{eq:nice}
Y_{m_1,m_2}(A) := \frac{1}{2} \sum_{\theta \in \{0,1\}^2}   \prod_{ j \in \mathbb{T}_{m_1,m_2}} \left| \alpha + \beta \exp \left\{ 2 \pi i  \frac{ j_1 + \theta_1 /2}{ m_1 } \right\} + \gamma \exp \left\{ 2 \pi i  \frac{ j_2 + \theta_2 /2}{ m_2 } \right\} \right|.
\end{align}
\end{lemma}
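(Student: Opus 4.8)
The plan is to analyze the eigenvalues appearing in the product formula of Theorem~\ref{thm:kast2}, namely the complex numbers $\mu_j := \alpha + \beta e^{2\pi i (j_1+\theta_1/2)/m_1} + \gamma e^{2\pi i (j_2+\theta_2/2)/m_2}$ for $j \in \mathbb{T}_{m_1,m_2}$, and to show that their product is real with a predictable sign. The reality is the easy part: I would observe that the set of exponents $\{(j_1+\theta_1/2)/m_1, (j_2+\theta_2/2)/m_2\}$ is symmetric under negation modulo $1$ (since $j \mapsto -j - \theta$ permutes $\mathbb{T}_{m_1,m_2}$, using $\theta_i \in \{0,1\}$ and reading indices mod $m_i$), so the eigenvalues come in complex-conjugate pairs $\mu_j, \overline{\mu_j} = \mu_{-j-\theta}$. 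Eigenvalues that are fixed by this involution are real. Hence the product $\prod_j \mu_j$ is a product of a real number with a collection of quantities of the form $|\mu_j|^2 > 0$, so it is real; this gives the first claim.

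For the sign, under the triangle inequalities \eqref{eq:triangles} no eigenvalue vanishes unless one of the exponents hits a degenerate value, and more importantly each real eigenvalue has a determined sign. The self-conjugate eigenvalues occur exactly when $2\pi(j_i+\theta_i/2)/m_i \in \{0,\pi\} \pmod{2\pi}$ for both $i$, i.e. when $e^{2\pi i(j_i+\theta_i/2)/m_i} = \pm 1$; the choice of $+1$ requires $\theta_i = 0$ and the choice of $-1$ requires either $\theta_i=1$ (always achievable, with $j_i$ determined) or $\theta_i = 0$ with $m_i$ even and $j_i = m_i/2$. So the real eigenvalues take values in $\{\alpha \pm \beta \pm \gamma\}$, and I would use \eqref{eq:triangles} to check the sign of each: $\alpha+\beta+\gamma>0$, while $\alpha-\beta+\gamma \geq 0$, $\alpha+\beta-\gamma\geq 0$, and $-\alpha+\beta+\gamma\geq0$ (with the degenerate case $\alpha-\beta-\gamma$ being $\leq 0$, contributing the only possible negative factor). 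The combinatorial heart is then a careful case analysis on the parities of $m_1,m_2$ and the value of $\theta$: count how many self-conjugate eigenvalues arise and how many of them equal the negative quantity $\alpha-\beta-\gamma$ (this happens precisely when $e^{2\pi i(j_1+\theta_1/2)/m_1}=-1$ and $e^{2\pi i(j_2+\theta_2/2)/m_2}=+1$, which demands $\theta_2=0$, $\theta_1=1$ or ($\theta_1=0$, $m_1$ even), and $m_2$ even or $\theta_2$... — one tracks exactly which $(\theta,m_1,m_2)$ permit this). The total sign of $\prod_j \mu_j$ is $(-1)$ to the number of such negative factors, and one verifies this exponent agrees with $(\theta_1+m_1+1)(\theta_2+m_2+1) \bmod 2$.

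I expect the main obstacle to be exactly this last bookkeeping: matching the parity of the number of $(\alpha-\beta-\gamma)$-type eigenvalues against the quadratic expression $(\theta_1+m_1+1)(\theta_2+m_2+1)$ across all of the cases $m_i$ even/odd and $\theta_i \in \{0,1\}$. A clean way to organize it is to note that $e^{2\pi i (j_i + \theta_i/2)/m_i} = -1$ has a solution $j_i \in \mathbb{Z}_{m_i}$ if and only if $m_i + \theta_i$ is even (i.e. $m_i \equiv \theta_i \bmod 2$), in which case the solution is unique; similarly $=+1$ has a solution iff $\theta_i$ is even, uniquely. So the number of self-conjugate eigenvalues equal to $\alpha-\beta-\gamma$ is $\mathbf{1}\{m_1 \equiv \theta_1\}\cdot\mathbf{1}\{\theta_2 = 0\}$, a product of indicator functions whose parity one checks equals that of $(\theta_1+m_1+1)(\theta_2+m_2+1)$ after noting the other self-conjugate contributions ($\alpha+\beta+\gamma$, $\alpha-\beta+\gamma$, $\alpha+\beta-\gamma$) are all nonnegative and hence sign-neutral. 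Once the sign is pinned down, \eqref{eq:nice} follows immediately by pairing each conjugate pair $\mu_j\overline{\mu_j} = |\mu_j|^2$ and writing the self-conjugate real factors as their absolute values times the overall sign, then substituting into Theorem~\ref{thm:kast2}.
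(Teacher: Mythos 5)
Your approach is the same as the paper's: pair the eigenvalues into complex conjugates, observe that the self-conjugate (real) eigenvalues are among $\alpha\pm\beta\pm\gamma$, and under \eqref{eq:triangles} only $\alpha-\beta-\gamma$ can be negative, so the sign is $(-1)^{\#\{\text{factors}\,=\,\alpha-\beta-\gamma\}}$. (The paper gets the reality part even more cheaply by just noting $K^{\alpha,\beta,\gamma}_{m_1,m_2,\theta}$ is a real matrix.) However, the bookkeeping you worried about does in fact go wrong in your write-up, in two concrete places.

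First, the sign combination you attach to the negative eigenvalue is wrong. You claim $\alpha-\beta-\gamma$ ``happens precisely when $e^{2\pi i(j_1+\theta_1/2)/m_1}=-1$ and $e^{2\pi i(j_2+\theta_2/2)/m_2}=+1$.'' But then $\mu_j=\alpha+\beta\cdot(-1)+\gamma\cdot(+1)=\alpha-\beta+\gamma$, which is $\geq 0$ by \eqref{eq:triangles}. To get $\alpha-\beta-\gamma$ you need \emph{both} exponentials to equal $-1$. Second, you assert that $e^{2\pi i(j_i+\theta_i/2)/m_i}=-1$ is ``always achievable'' when $\theta_i=1$; it is not. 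You need $2j_i+\theta_i\equiv m_i \pmod{2m_i}$, and since $2j_i+\theta_i$ has the parity of $\theta_i$, a solution exists iff $m_i\equiv\theta_i\pmod 2$ --- so $\theta_i=1$ with $m_i$ even yields nothing. (Your final formula $\mathbf{1}\{m_1\equiv\theta_1\}$ silently corrects this, contradicting your earlier sentence.) Because of the first error, your count $\mathbf{1}\{m_1\equiv\theta_1\}\cdot\mathbf{1}\{\theta_2=0\}$ does \emph{not} have the same parity as $(\theta_1+m_1+1)(\theta_2+m_2+1)$; for instance $\theta_2=0$, $m_2$ odd gives $1$ on your side but $0$ on the other. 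The corrected count is $\mathbf{1}\{m_1\equiv\theta_1 \bmod 2\}\cdot\mathbf{1}\{m_2\equiv\theta_2 \bmod 2\}$, which is exactly the parity of $(\theta_1+m_1+1)(\theta_2+m_2+1)$, and with that fix your argument coincides with the one in the paper.
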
 

\begin{proof}
That 
\begin{align*}
\det\limits_{ x,y \in \mathbb{T}_{m_1,m_2} } K_m^{A,\theta}(x,y) = \prod_{ j \in \mathbb{T}_{m_1,m_2}} \left( \alpha + \beta \exp \left\{ 2 \pi i  \frac{ j_1 + \theta_1 /2}{ m_1 } \right\} + \gamma \exp \left\{ 2 \pi i  \frac{ j_2 + \theta_2 /2}{ m_2 } \right\} \right).
\end{align*}
is a real number is a consequence of the fact that is the determinant of a matrix with real entries. Now note that by symmetry, every non-real entry in the product comes in a conjugate pair. In particular, the sign of $\det\limits_{ x,y \in \mathbb{T}_{m_1,m_2} } K_m^{A,\theta}(x,y) $ is $(-1)$ to the power of the number of negative elements occuring in the product over $j \in \mathbb{T}_{m_1,m_2}$. Now note that in order to have a non-paired element lying on the negative real axis, we require 
\begin{align} \label{eq:sign}
 \theta_1 + m_1 = \theta_2 + m_2 = 0 \qquad \mod 2.
\end{align}
In particular, the sign of $\det\limits_{ x,y \in \mathbb{T}_{m_1,m_2} } K_m^{A,\theta}(x,y) $ is given by $(-1)^{(\theta_1 + m_1 + 1)(\theta_2 + m_2 + 1)}$. The result follows. 
\end{proof}

Another important consequence of Lemma \ref{lem:diag} is a derivation of the inverse operator $\left(K^{\alpha,\beta,\gamma}_{m_1,m_2,\theta} \right)^{-1}$ using the eigensystem. Indeed, the reader may verify by direct computation that the inverse of $K^{\alpha,\beta,\gamma}_{m_1,m_2,\theta}$ is given by 
\begin{align} \label{eq:inverse}
\left(K^{\alpha,\beta,\gamma}_{m_1,m_2,\theta}\right)^{-1}(x,y) = \frac{1}{m_1m_2} \sum_{ j \in \mathbb{T}_{m_1,m_2} } \frac{ \exp \left\{ - 2 \pi i \left( \frac{ j_1 + \theta_1 /2}{ m_1 } (y_1-x_1) + \frac{ j_2 + \theta_2/2}{ m_2} (y_2 - x_2) \right) \right\} }{\alpha + \beta \exp \left\{ 2 \pi i  \frac{ j_1 + \theta_1 /2}{ m_1 } \right\} + \gamma \exp \left\{ 2 \pi i  \frac{ j_2 + \theta_2 /2}{ m_2 } \right\} }.
\end{align}
Spelling out our work in this section suppose $\{(x^i,y^i) :1 \leq i \leq p \}$ are a collection of points with $y^i \in \{x^i,x^i+\mathbf{e}_1,x^i+\mathbf{e}_2\}$. For $k \in \{1,2\}$, let $g_k := \{ 1 \leq i \leq j : x^i_k = m_k-1,y^i_k = 0 \}$ be the number of edges $(x^i,y^i)$ that exit and reenter the torus in the $i$-direction. Then according to \eqref{eq:beet2} in the constant weights case, under the signed probability measure $P^{\alpha,\beta,\gamma}_{m_1,m_2,\theta}$, the correlation functions are given by 
\begin{align} \label{eq:beet3}
P^{\alpha,\beta,\gamma}_{m_1,m_2,\theta} \left( \sigma(x^1) = y^1,\ldots,\sigma(x^p) = y^p \right) = & (-1)^{ \theta_1 g_1 + \theta_2 g_2}  \alpha^{ \# \{ i : y^i - x^i = 0 \} }\beta^{ \# \{ i : y^i - x^i = \mathbf{e}_1 \} }   \gamma^{ \# \{ i : y^i - x^i = \mathbf{e}_2 \} } \nonumber \\
& \times   \det \limits_{ i,j = 1}^k \left(  \left(K^{\alpha,\beta,\gamma}_{m_1,m_2,\theta}\right)^{-1} ( y^i,x^j) \right).
\end{align}

Clearly, $\left(K^{\alpha,\beta,\gamma}_{m_1,m_2,\theta}\right)^{-1}:\mathbb{T}_{m_1,m_2} \times \mathbb{T}_{m_1,m_2} \to \mathbb{C}$ is closely related to the operator $L^{\alpha,\beta,\gamma}_{m_1,m_2,\theta}:\mathbb{Z}^2 \to \mathbb{C}$ given by 
\begin{align} \label{eq:inverse2}
L^{\alpha,\beta,\gamma}_{m_1,m_2,\theta}(z) = \frac{1}{m_1m_2} \sum_{ j \in \mathbb{T}_{m_1,m_2} } \frac{ \exp \left\{ - 2 \pi i \left( \frac{ j_1 + \theta_1 /2}{ m_1 } z_1 + \frac{ j_2 + \theta_2/2}{ m_2} z_2 \right) \right\} }{\alpha + \beta \exp \left\{ 2 \pi i  \frac{ j_1 + \theta_1 /2}{ m_1 } \right\} + \gamma \exp \left\{ 2 \pi i  \frac{ j_2 + \theta_2 /2}{ m_2 } \right\} }.
\end{align}

That completes the section on the finite dimer matching model on the discrete torus. 

\section{Partition function asymptotics: proof of Theorem \ref{thm:pf0}} \label{sec:pf0proof}

According to Theorem \ref{thm:kast2}, the partition function fo the dimer model in the constant weights case (i.e. $\alpha_x = \alpha, \beta_x = \beta, \gamma_x = \gamma$ for all $x \in \mathbb{T}_{m_1,m_2}$) is given by
\begin{align*}
Z_{m,\theta}(\alpha,\beta,\gamma) = \frac{1}{2} \sum_{\theta \in \{0,1\}^2}  \prod_{ j \in \mathbb{T}_{m_1,m_2}} \left| \alpha + \beta \exp \left\{ 2 \pi i  \frac{ j_1 + \theta_1 /2}{ m_1 } \right\} + \gamma \exp \left\{ 2 \pi i  \frac{ j_2 + \theta_2 /2}{ m_2 } \right\} \right|.
\end{align*}
Now according to \eqref{eq:scapf} we have $Z_n(\lambda,T) = \lim_{m \to \infty} Y_{m,n}\left(1,1-\frac{\lambda}{m},\frac{T}{m} \right)$ where
\begin{align} \label{eq:vesper}
Y_{m,n}\left( 1, 1- \frac{\lambda}{m} , \frac{T}{m} \right) = \frac{1}{2}\sum_{\theta \in \{0,1\}^2}  \prod_{w^n = (-1)^{\theta_2}} \prod_{j=0}^{m-1} \left| 1- \frac{\lambda}{m} + \exp \left\{ 2 \pi i  \frac{ j + \theta_1 /2}{ m } \right\} + \frac{Tw}{m}\right|.
\end{align}
To this end, we note that
\begin{align} \label{eq:ghost}
 \left( 1- \frac{\lambda}{m} + \exp \left\{ 2 \pi i  \frac{ j + \theta_1 /2}{ m } \right\} + \frac{Tw}{m} \right)= e^{ - \lambda/m} \left(  1+ \exp \left\{ 2 \pi i  \frac{ j + \theta_1 /2}{ m } \right\} + \frac{Tw + \lambda}{m} + O(1/m^2) \right).
\end{align}
Plugging \eqref{eq:ghost} into \eqref{eq:vesper}, we see that
\begin{align} \label{eq:vesper2}
\lim_{m \to \infty} Y_{m,n}\left(  1- \frac{\lambda}{m} , 1 , \frac{T}{m} \right) = \frac{e^{-\lambda} }{2}\sum_{\theta \in \{0,1\}^2}  \prod_{w^n = (-1)^{\theta_2}} \lim_{m \to \infty} \prod_{j=0}^{m-1} \left| 1+ \exp \left\{ 2 \pi i  \frac{ j + \theta_1 /2}{ m } \right\} + \frac{Tw + \lambda}{m}\right|.
\end{align}
Studying \eqref{eq:vesper2} we see that our main task in this section is to understand the large-$m$ asymptotics of the product in $m$ terms. 
On this front, we have the following result:

\begin{thm} \label{thm:product max}
For $z \in \mathbb{C}$ and $\theta_1 \in \{0,1\}$ we have
\begin{align} \label{eq:nicemax}
\lim_{m \to \infty, \text{$m$ even} }\prod_{ 0 \leq j \leq m-1}  \left|   1+ e^{  2 \pi i \frac{ j + \theta_1 /2}{m} } + \frac{ z }{m} \right| =  \left| e^z - (-1)^{\theta_1} \right|.
\end{align}
\end{thm}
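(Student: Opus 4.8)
The plan is to take logarithms and identify the product as the exponential of a Riemann sum. Write $a_j := 1 + e^{2\pi i (j+\theta_1/2)/m} + z/m$; the roots of $w \mapsto w^m - (-1)^{\theta_1}$ are precisely the points $-e^{2\pi i (j+\theta_1/2)/m}$ for $0 \le j \le m-1$ when $m$ is even (note $e^{\pi i \theta_1} = (-1)^{\theta_1}$, so $(-e^{2\pi i(j+\theta_1/2)/m})^m = (-1)^m e^{2\pi i j} e^{\pi i \theta_1} = (-1)^{\theta_1}$ using $m$ even). Hence $\prod_{j}(1 + e^{2\pi i(j+\theta_1/2)/m}) = \prod_j (1 - (-e^{2\pi i(j+\theta_1/2)/m})) = 1^m - (-1)^{\theta_1} = 1 - (-1)^{\theta_1}$, which is $0$ when $\theta_1 = 0$ and $2$ when $\theta_1 = 1$. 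So for $\theta_1 = 1$ one can factor out the unperturbed product and the $z/m$ terms only contribute a lower-order correction; for $\theta_1 = 0$ one of the factors is $O(1/m)$ and must be treated separately.

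Concretely, I would first isolate the factor $j = j_0$ closest to making $1 + e^{2\pi i(j+\theta_1/2)/m}$ vanish. For $\theta_1 = 1$ no factor vanishes in the limit: $|1 + e^{2\pi i(j+1/2)/m}|$ is bounded below away from $0$ uniformly, so $\log|a_j| = \log|1 + e^{2\pi i(j+1/2)/m}| + \Re\big(\tfrac{z/m}{1+e^{2\pi i(j+1/2)/m}}\big) + O(1/m^2)$, and summing over $j$ gives $\log 2 + \Re\big(\tfrac1m \sum_j \tfrac{z}{1+e^{2\pi i(j+1/2)/m}}\big) + o(1)$. The Riemann sum $\tfrac1m\sum_j \tfrac{1}{1+e^{2\pi i(j+1/2)/m}} \to \int_0^1 \tfrac{dt}{1+e^{2\pi i t}}$, which evaluates to $1/2$ (e.g. by residues or by the symmetry $t \mapsto 1-t$ pairing $\tfrac{1}{1+e^{2\pi i t}} + \tfrac{1}{1+e^{-2\pi i t}} = 1$). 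So the limit is $2 e^{\Re(z)/2}$... wait, this must equal $|e^z - (-1)^{\theta_1}| = |e^z + 1|$, so in fact I should not approximate $\log|a_j|$ but rather recognize the full product directly.

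The cleaner route: for $\theta_1 = 1$, write $a_j = (1 + e^{2\pi i(j+1/2)/m})(1 + \tfrac{z/m}{1+e^{2\pi i(j+1/2)/m}})$ and hence $\prod_j a_j = \big(\prod_j(1+e^{2\pi i(j+1/2)/m})\big)\prod_j(1 + \tfrac{z/m}{1+e^{2\pi i(j+1/2)/m}}) = 2\prod_j(1 + \tfrac{z}{m(1+e^{2\pi i(j+1/2)/m})})$. Taking $\log$ of the remaining product and expanding, $\sum_j \log(1 + \tfrac{z}{m(1+e^{2\pi i(j+1/2)/m})}) = \tfrac{z}{m}\sum_j \tfrac{1}{1+e^{2\pi i(j+1/2)/m}} + O(1/m) \to z \int_0^1 \tfrac{dt}{1+e^{2\pi i t}} = z/2$, so $\prod_j a_j \to 2 e^{z/2}$, and $|2e^{z/2}| = 2 e^{\Re z/2}$. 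Hmm — but $|e^z + 1|$ is not $2e^{\Re z / 2}$ in general. Let me reconcile: $|e^z + 1| = |e^{z/2}(e^{z/2} + e^{-z/2})| = e^{\Re z/2} |e^{z/2} + e^{-z/2}| = 2 e^{\Re z/2} |\cosh(z/2)|$. So the answer $2e^{z/2}$ has the wrong modulus — which signals that the error terms I discarded are not negligible: the terms where $1 + e^{2\pi i(j+1/2)/m}$ is small (the $j$ near $j_0 \approx m/2$) make $\tfrac{z}{m(1+e^{2\pi i(j+1/2)/m})}$ of order $O(1)$, not $o(1)$, so the naive log-expansion fails there. The fix is to handle a window of $j$'s near $j_0$ exactly and the rest by the Riemann-sum argument; near $j_0$, writing $j = j_0 + \ell$, one gets $1 + e^{2\pi i(j+\theta_1/2)/m} \approx \tfrac{2\pi i}{m}(\ell + \text{const})$ and the product over $\ell$ telescopes into something like $\prod_\ell(1 + \tfrac{c}{\ell + \text{const}})$, a ratio of Gamma functions that, combined with the tail, rebuilds $e^{z/2} + e^{-z/2}$.

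So the real structure is: \textbf{Step 1.} Factor the unperturbed product $\prod_j(1+e^{2\pi i(j+\theta_1/2)/m})$ exactly using the polynomial identity above, getting $1 - (-1)^{\theta_1}$. \textbf{Step 2.} For the perturbation, split the index set into a "bulk" where $|1+e^{2\pi i(j+\theta_1/2)/m}|$ is bounded below and a "window" of $O(\sqrt m)$ indices near the near-root $j_0$. \textbf{Step 3.} In the bulk, use the second-order Taylor expansion of $\log$ and convergence of Riemann sums to $\int_0^1 \tfrac{z\,dt}{1+e^{2\pi i t}} = z/2$ (noting the bulk excludes a shrinking neighborhood of the singularity $t_0$, and one checks the omitted part of the integral vanishes). \textbf{Step 4.} In the window, linearize $1 + e^{2\pi i(j+\theta_1/2)/m}$ around $j_0$ and evaluate the resulting product of the form $\prod_{\ell}(1 + \tfrac{z/(2\pi i)}{\ell - \nu})$ via the classical formula $\prod_{\ell=-L}^{L}(1 + \tfrac{c}{\ell-\nu}) \to$ a Gamma-function ratio, then let $L \to \infty$ to get $\tfrac{\sin(\pi\nu)}{\sin(\pi(\nu - c))}$-type expressions; for $\theta_1 = 0$ the near-root is exact ($j_0 = m/2$, value exactly $z/m$) and this extra factor is literally $z/m$ times a product, contributing the "$e^z - 1$" versus "$e^z + 1$" distinction. \textbf{Step 5.} Multiply the three contributions and simplify trig/exponential identities to land on $|e^z - (-1)^{\theta_1}|$; taking absolute values at the end is harmless since everything converges in $\mathbb{C}$. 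The \emph{main obstacle} is Step 4: making the window analysis rigorous — choosing the window width, controlling the linearization error uniformly, and correctly matching the Gamma-function asymptotics with the bulk Riemann sum so the pieces combine into $e^{z/2} \pm e^{-z/2}$ rather than just $e^{z/2}$. An alternative that sidesteps the window entirely: recognize $\prod_{j=0}^{m-1}\big(1 + e^{2\pi i(j+\theta_1/2)/m} + z/m\big)$ as, up to scaling, the value of $p(w) := \prod_{j}\big(w - (-(1+z/m))\cdot\text{(roots)}\big)$... more precisely, set $\zeta_j = -e^{2\pi i(j+\theta_1/2)/m}$ so the product is $\prod_j\big((1 + z/m) - \zeta_j\big) \cdot$ (sign corrections)$= Q(1+z/m)$ where $Q(w) = \prod_j(w - \zeta_j) = w^m - (-1)^{\theta_1}$ (again using $m$ even). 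Hence the product is \emph{exactly} $(1 + z/m)^m - (-1)^{\theta_1}$, and its modulus is $\big|(1+z/m)^m - (-1)^{\theta_1}\big| \to |e^z - (-1)^{\theta_1}|$. This last observation makes the whole theorem a one-line computation — I would lead with it, and relegate the Riemann-sum discussion to a remark; the only check needed is the sign bookkeeping that $\prod_j \zeta_j^{?}$ and the leading coefficient of $Q$ are correctly accounted, which for $m$ even is immediate.
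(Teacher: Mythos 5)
Your final observation is correct and is in fact the right proof: since the $m$ points $e^{2\pi i (j+\theta_1/2)/m}$, $0\le j\le m-1$, are precisely the $m$-th roots of $(-1)^{\theta_1}$, one has the exact polynomial identity
\begin{align*}
\prod_{j=0}^{m-1}\Bigl(w - e^{2\pi i(j+\theta_1/2)/m}\Bigr) = w^m - (-1)^{\theta_1},
\end{align*}
and substituting $w=-(1+z/m)$ gives $\prod_{j=0}^{m-1}\bigl(1+e^{2\pi i(j+\theta_1/2)/m}+z/m\bigr) = (-1)^m\bigl[(-1)^m(1+z/m)^m - (-1)^{\theta_1}\bigr] = (1+z/m)^m - (-1)^{m+\theta_1}$, which for $m$ even equals $(1+z/m)^m - (-1)^{\theta_1}$ exactly. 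Since $(1+z/m)^m\to e^z$, taking moduli gives the claim. This route is genuinely different from, and substantially simpler than, the paper's: the paper takes logarithms, splits into a Riemann-integral term $A_m$ and an error term $B_m$ (see \eqref{eq:ADEF}--\eqref{eq:BDEF}), computes the $A_m$ limit by a contour argument (Lemma~\ref{lem:ALEM}), handles $B_m$ by isolating the near-singular indices and controlling the sum (Lemma~\ref{lem:tech}), and then evaluates the resulting infinite sum via gamma-function asymptotics (Lemma~\ref{lem:sum int}) before recombining everything trigonometrically. All of that machinery is bypassed by the exact factorization. The earlier part of your proposal --- the naive Riemann-sum expansion of $\log|a_j|$ that yields the wrong $2e^{\Re z/2}$, the diagnosis that the near-singular window contributes an $O(1)$ correction, and the sketch of a gamma-function matching --- essentially reconstructs what the paper does, and you are right that the naive log-expansion fails near the root; but since you discard it in favour of the exact identity, nothing hinges on it. One small editorial note: the paper's asymptotic lemmas in Section~\ref{sec:pf0proof} are used only for Theorem~\ref{thm:product max}, whereas the Poisson-summation tools are (re)developed independently in Section~\ref{sec:corrproof}, so replacing the Section~\ref{sec:pf0proof} proof with your one-liner would genuinely shorten the paper without losing anything needed later.
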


With a view to proving Theorem \ref{thm:product max}, we note first of all that we may write 
\begin{align*}
\prod_{ 0 \leq j \leq m-1} \left|   1 + e^{  2 \pi i \frac{ j + \theta_1 /2}{m} } + \frac{ z }{m}   \right| = \exp \left\{ A_m(x+iy) + B_m(\theta_1,x+iy)  \right\},
\end{align*}
where
\begin{align} \label{eq:ADEF}
A_m(x+iy)= m \int_0^1 \log \left| 1 +  e^{ 2 \pi i \phi} + \frac{x + iy}{m} \right| \mathrm{d} \phi
\end{align}
is an integral approximating the logarithmic sum and  
\begin{align} \label{eq:BDEF}
B_m(\theta_1,x+iy) = \sum_{ 0 \leq j \leq m-1} \left\{ \log \left| 1+ e^{  2 \pi i \frac{ j + \theta_1/2}{m} } + \frac{ x + iy}{m}  \right| - \int_{-1/2}^{1/2} \log \left| 1+ e^{  2 \pi i \frac{ j + \theta_1/2 + \phi }{m} } + \frac{ x + iy}{m}  \right|  \mathrm{d}\phi    \right\} 
\end{align}
may be thought as an error term for this integral. It transpires that both $A_m$ and $B_m$ contribute asymptotically to the product in Theorem \ref{thm:product max}. We begin by looking at the asymptotics of $A_m$.

\begin{lemma} \label{lem:ALEM}
For $x + iy \in \mathbb{C}$ we have 
\begin{align*}
 \lim_{m \to \infty} A_m(x+iy) = \max \{ x, 0 \}.
\end{align*}
\end{lemma}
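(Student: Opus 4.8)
The plan is to evaluate the limit of $A_m(x+iy) = m \int_0^1 \log\bigl| 1 + e^{2\pi i \phi} + \tfrac{x+iy}{m} \bigr|\, \mathrm{d}\phi$ by isolating where the integrand is large, namely near $\phi = 1/2$ where $1 + e^{2\pi i \phi}$ vanishes. Away from $\phi = 1/2$, one has $1 + e^{2\pi i\phi} + \tfrac{z}{m} = (1 + e^{2\pi i\phi})(1 + O(1/m))$, so $\log$ of the modulus is $\log|1 + e^{2\pi i\phi}| + O(1/m)$, and since $\int_0^1 \log|1 + e^{2\pi i\phi}|\,\mathrm{d}\phi = 0$ (a standard Jensen-type identity, the Mahler measure of $1 + x$), the contribution from the bulk, when multiplied by $m$, still needs care — it is $m$ times something of size $O(1/m)$ on most of the interval but the near-singularity region dominates. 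The cleaner route is the substitution $\phi = 1/2 + \psi/m$, which zooms into a window of width $O(1/m)$ around $\phi=1/2$.

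\textbf{Key steps.} First I would change variables $\phi = \tfrac12 + \tfrac{\psi}{m}$ on a shrinking neighbourhood, say $|\phi - 1/2| \le \delta$ for fixed small $\delta$, giving $d\phi = d\psi/m$, so the prefactor $m$ cancels and we get $\int_{-\delta m}^{\delta m} \log\bigl| 1 + e^{2\pi i(1/2 + \psi/m)} + \tfrac{z}{m}\bigr|\,\mathrm{d}\psi$. Now $1 + e^{2\pi i(1/2+\psi/m)} = 1 - e^{2\pi i\psi/m} = -\tfrac{2\pi i \psi}{m} + O(\psi^2/m^2)$, so the integrand is $\log\bigl|\tfrac{1}{m}\bigr| + \log\bigl| -2\pi i \psi + z + O(\psi^2/m)\bigr|$, i.e. $-\log m + \log|z - 2\pi i \psi| + o(1)$ uniformly for $|\psi| \le \delta m$. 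Meanwhile on $|\phi - 1/2| \ge \delta$ the integrand is bounded, so that part of $m\int$ contributes $m \cdot \bigl(\int_{|\phi-1/2|\ge\delta}\log|1+e^{2\pi i\phi}|\,\mathrm{d}\phi + O(1/m)\bigr)$; combining with the $-\log m$ from the window (the window has length $2\delta m$, contributing $-2\delta m \log m$...) — this bookkeeping is delicate, so instead I would organize the whole computation as follows: write $A_m(z) = m\int_0^1 \log|1 + e^{2\pi i\phi}|\,\mathrm{d}\phi + m\int_0^1 \log\bigl| 1 + \tfrac{z/m}{1 + e^{2\pi i\phi}}\bigr|\,\mathrm{d}\phi$; the first term is exactly $0$; for the second, substitute and use $\log|1 + u| $ expansions, with the dominant contribution from $\phi$ near $1/2$. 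Better still, recognize that $\prod_{j=0}^{m-1}\bigl(1 + e^{2\pi i(j+\theta_1/2)/m} + \tfrac{z}{m}\bigr)$ is, up to the error terms, an evaluation of the polynomial $\zeta \mapsto \prod_j (\zeta - \omega_j)$ at a point, and $A_m$ is the Riemann-sum approximation of its log; the exact product identity $\prod_{j=0}^{m-1}(a - \zeta \omega_j) = a^m - \zeta^m$ for $m$-th roots of unity is what ultimately yields $|e^z - (-1)^{\theta_1}|$ in Theorem \ref{thm:product max}, but Lemma \ref{lem:ALEM} should be proved by the direct analytic estimate since $\theta_1$ does not enter.

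\textbf{Cleanest argument.} I would split $[0,1]$ into $I_\delta = (1/2 - \delta, 1/2 + \delta)$ and its complement. On the complement, $\bigl|\log| 1 + e^{2\pi i\phi} + \tfrac{z}{m}| - \log|1 + e^{2\pi i\phi}|\bigr| \le \tfrac{C}{m}$ uniformly, and $\int_{[0,1]\setminus I_\delta} \log|1 + e^{2\pi i\phi}|\,\mathrm{d}\phi = -\int_{I_\delta}\log|1 + e^{2\pi i\phi}|\,\mathrm{d}\phi$, which is $O(\delta\log(1/\delta))$; so $m$ times the complement contribution is $O(1) \cdot$ ... — actually this shows $m \int_{[0,1]\setminus I_\delta}\log|1+e^{2\pi i\phi}+z/m|\mathrm{d}\phi = -m\int_{I_\delta}\log|1+e^{2\pi i\phi}|\mathrm{d}\phi + O(1)$, which diverges, so I must keep the two integrals together. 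The correct move: on $[0,1]\setminus I_\delta$ write the integrand as $\log|1 + e^{2\pi i\phi}| + \Re\log(1 + \tfrac{z/m}{1+e^{2\pi i\phi}})$ and note $m$ times the integral of the second term tends to $\Re \int_{[0,1]\setminus I_\delta} \tfrac{z}{1 + e^{2\pi i\phi}}\,\mathrm{d}\phi$ (a convergent principal-value-type integral as $\delta \to 0$), while $m\int_{[0,1]\setminus I_\delta}\log|1+e^{2\pi i\phi}|\mathrm{d}\phi = -m\int_{I_\delta}\log|1+e^{2\pi i\phi}|\mathrm{d}\phi \to \mp\infty$ must be cancelled by the $I_\delta$ contribution. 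Inside $I_\delta$, substitute $\phi = 1/2 + \psi/m$: the contribution is $\int_{-\delta m}^{\delta m}\log|{-2\pi i\psi} + z + O(\psi^2/m)|\,\mathrm{d}\psi - (2\delta m)\log m + o(1)$, and $-2\delta m \log m$ pairs with $-m\int_{I_\delta}\log|1+e^{2\pi i\phi}|\mathrm{d}\phi = -m\int_{I_\delta}\log(2\pi|\phi - 1/2| + \dots)\mathrm{d}\phi$ to leave a finite remainder. Carrying out the $\psi$-integral via $\int_{-R}^{R}\log|z - 2\pi i\psi|\,\mathrm{d}\psi$ and sending $R = \delta m \to \infty$ then $\delta \to 0$, together with the evaluation $\Re\int \tfrac{z}{1+e^{2\pi i\phi}}\mathrm{d}\phi$, is where the answer $\max\{x,0\}$ emerges — note $\max\{x,0\} = \tfrac12(x + |x|)$ and $\Re\bigl(\tfrac{z}{2}\bigr) = \tfrac x2$ naturally splits off, with the $|x|/2$ coming from the large-$\psi$ asymptotics $\log|z - 2\pi i\psi| = \log(2\pi|\psi|) + \tfrac{x}{2\pi|\psi|} + O(1/\psi^2)$ whose $x$-linear tail integrates to $\tfrac{|x|}{2}\log$-type growth. \textbf{The main obstacle} is precisely this matching of the two divergent pieces (the window contributes $\sim -2\delta m \log m$ and so does the bulk log-integral over $I_\delta$): one must track the $O(1)$ remainder through the cancellation carefully and verify it evaluates to $\max\{x,0\}$ independent of $\delta$. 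A slicker alternative that sidesteps the divergence is to differentiate in $z$: $\partial_x A_m(x+iy) = m\int_0^1 \Re\bigl(\tfrac{1}{m(1 + e^{2\pi i\phi}) + x + iy}\bigr)\mathrm{d}\phi = \Re\int_0^1 \tfrac{\mathrm{d}\phi}{1 + e^{2\pi i\phi} + (x+iy)/m}$, a contour integral over $|\zeta|=1$ that one evaluates by residues — the pole is inside or outside the unit circle according to the sign of $x$, giving $\partial_x \lim A_m = \mathbf{1}_{x > 0}$, hence $\lim A_m = \max\{x,0\} + c(y)$, and a separate check at $x \to -\infty$ (where the integrand is $O(1/m)$, forcing $c(y) = 0$) finishes it. I would present this residue-based argument as the main proof since it is by far the cleanest.
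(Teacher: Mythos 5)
Your final ``slicker alternative'' is the right idea and is essentially a differentiated version of what the paper does, but the paper's route is cleaner and avoids the one genuine gap in your argument. The paper observes that for each fixed $m$ the integral defining $A_m$ can be evaluated \emph{exactly}: writing $\mathcal{I}(a):=\int_0^1\log\bigl|a+e^{2\pi i\phi}\bigr|\,\mathrm{d}\phi$, one has (by rotational symmetry) $\mathcal{I}(a+ib)=\mathcal{I}(|a+ib|)$, and then for real $a>0$ the contour integral $\oint_{|w|=1}\frac{\mathrm{d}w}{2\pi iw}\log(a+w)$ is evaluated by Cauchy's formula, giving $\mathcal{I}(a)=\log a$ for $a>1$ and $\mathcal{I}(a)=0$ for $a\le 1$ — the classical Jensen/Mahler-measure identity $\mathcal{I}(a)=\max\{\log|a|,0\}$. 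This yields $A_m(x+iy)=m\max\{\log|1+(x+iy)/m|,0\}$ \emph{exactly}, and the limit is then a one-line Taylor expansion. Your residue computation for $\partial_x A_m$ is precisely the derivative of this identity, so the two arguments are two sides of the same coin; what the paper's version buys is that you never have to interchange $\lim_m$ with $\partial_x$, nor fix an integration constant. On that last point, your proposed anchor ``check at $x\to-\infty$, where the integrand is $O(1/m)$'' does not work as stated: for fixed $m$ the integrand is $\log|1+e^{2\pi i\phi}+z/m|\sim\log(|x|/m)\to+\infty$ as $x\to-\infty$, so $A_m(x+iy)$ diverges in that order of limits, and the order of limits is exactly the issue you would need to control. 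You could instead anchor at $x=0$, but evaluating $\lim_m A_m(iy)$ directly is no easier than the original problem unless you already have the exact Jensen formula in hand — at which point you may as well not differentiate at all. The lengthy first two thirds of your proposal (zooming into $\phi=1/2$, cancelling the two $\sim\delta m\log m$ divergences) would work in principle but is much harder than necessary; the exact identity sidesteps it entirely, and you yourself flag the bookkeeping there as the main obstacle. I would drop the zoom/cancellation analysis, state and prove the Jensen identity $\int_0^1\log|a+e^{2\pi i\phi}|\,\mathrm{d}\phi=\max\{\log|a|,0\}$ directly by Cauchy's formula, and conclude.
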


\begin{proof}
Write
\begin{align*}
\mathcal{I}(a+bi) := \int_0^1  \mathrm{d} \theta \log \left| a+ib + e^{ 2 \pi i \theta } \right| .
\end{align*}
By symmetry, the reader will note that $\mathcal{I}(a+ib) = \mathcal{I}(|a+ib|)$ for all complex $a+ib$. In particular we may consider $\mathcal{I}(a)$ for positive and real $a$.  In this case we may express $\mathcal{I}(a)$ in terms of the contour integral 
\begin{align*}
\mathcal{I}(a) = \int_{|w|=1} \frac{\mathrm{d}z}{ 2 \pi i z } \log | a + z | = \int_{|w|=1} \frac{\mathrm{d}z}{ 2 \pi i z } \log(  a + z )
\end{align*}
where the latter equality above follows from taking the principal branch $\log : \mathbb{C} \to \mathbb{R} \times i(-\pi,\pi]$ of the complex logarithm and noting that the symmetry about the real axis ensure the imaginary term in the integral is zero.

Now in the setting where $a > 1$, the function $z \mapsto \log(a + z)$ is holomorphic for all $z$ in the unit disc, and it follows from the Cauchy integral formula that
\begin{align*}
\mathcal{I}( a) = \log a \qquad \text{ for $a > 1$. }
\end{align*}
On the other hand, when $a \leq 1$, the integrand is zero. To see this, note that for $a \leq 1$ we have 
\begin{align*}
\mathcal{I}(a) &= \int_{|w| = 1} \frac{ \mathrm{d} w }{ 2 \pi i w} \log(a + w) \\
&=  \int_{|w|=1} \frac{ \mathrm{d} w}{2 \pi i w } \left(  \log(a) + \log(1+w/a) \right)\\
&= \log(a) + \int_{ |w| = 1 } \frac{ \mathrm{d} w}{2 \pi i w }\log(1+ w/ a ) \\
&= \log(a) + \int_{ |w| = 1 } \frac{ \mathrm{d} w}{2 \pi i w } \left( \log(1/w) + \log(w+1/a ) \right) \\
&= \log (a) + 0 + \log(1/a) = 0.
\end{align*}
In particular, for $a+ bi \in \mathbb{C}$, $\mathcal{I}( |a+bi | ) = \max \{ \log|a+bi|,0\}$.

It follows that
\begin{align*}
 m \int_0^1  d \theta \log \left| 1 + e^{ 2 \pi i \theta } +  \frac{ x+ iy}{m}  \right| = \max \{ x, 0 \} = m \max \left\{ \log| 1 + \frac{ x + iy }{ m } | , 0 \right\}.
\end{align*}
The result in question follows from noting that $| 1 + \frac{ x + iy }{ m } | = 1 + \frac{x}{m} + o(1/m)$, and hence $m \log  |1 + \frac{ x+ iy}{m }| = m\log(1 + \frac{x}{m} + o(1/m) ) = x + o(1)$. 
\end{proof}

We now turn to understand the asymptotic behaviour of $B_m$. 

\begin{lemma} \label{lem:tech}
With $B_m(\theta_1,x+iy)$ as in \eqref{eq:BDEF} we have $\lim_{m \to \infty, \text{$m$ even}} B_m(\theta_1,x+iy) = C(\theta_1,x + iy )$ where 
\begin{align*}
C(\theta_1,x + iy) :=  \frac{1}{2} \sum_{ k \in \mathbb{Z}} \int_{-1/2}^{1/2} \left\{ \log\left( 1 + \left( \frac{ y/2\pi + \theta_1/2 + k }{ x/2 \pi  }  \right)^2 \right) - \log\left( 1  + \left(\frac{ y/2\pi + \theta_1/2 + k + \phi }{ x/2\pi}   \right)^2  \mathrm{d} \phi  \right)  
 \right\}.
\end{align*}
\end{lemma}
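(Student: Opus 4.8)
The plan is to understand the sum defining $B_m(\theta_1, x+iy)$ by pairing each term indexed by $j$ with the corresponding integral, identifying the dominant contributions as $m \to \infty$, and showing that only the indices $j$ near $m/2$ (i.e. where $1 + e^{2\pi i (j+\theta_1/2)/m}$ is near zero) contribute in the limit. First I would observe that for $j$ bounded away from $m/2$, the quantity $1 + e^{2\pi i (j+\theta_1/2)/m}$ stays bounded away from zero, so $\log|1 + e^{2\pi i(j+\theta_1/2)/m} + \frac{x+iy}{m}|$ is smooth in the relevant region and the difference between the value at the node and the average over $[-1/2,1/2]$ is $O(1/m^2)$ by a second-order Taylor expansion; summing $O(m)$ such terms gives a vanishing contribution. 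Hence the limit of $B_m$ is governed entirely by the indices $j = m/2 + k$ for $k$ in a fixed (but eventually arbitrarily large) window, together with a tail estimate showing the window may be sent to infinity.

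Next, for $j = m/2 + k$ with $|k|$ of order $1$, I would expand $1 + e^{2\pi i (m/2 + k + \theta_1/2)/m}$. Writing $e^{2\pi i (m/2 + k + \theta_1/2)/m} = e^{\pi i} e^{2\pi i (k+\theta_1/2)/m} = -e^{2\pi i (k + \theta_1/2)/m} = -\left(1 + \frac{2\pi i (k+\theta_1/2)}{m} + O(1/m^2)\right)$, so that
\begin{align*}
1 + e^{2\pi i (m/2 + k + \theta_1/2)/m} + \frac{x+iy}{m} = \frac{1}{m}\left( x + iy - 2\pi i (k + \theta_1/2) \right) + O(1/m^2).
\end{align*}
Therefore $\log\left| 1 + e^{2\pi i (m/2 + k + \theta_1/2)/m} + \frac{x+iy}{m}\right| = -\log m + \log\left| x + iy - 2\pi i(k+\theta_1/2)\right| + o(1)$, and similarly for the shifted argument $j + \theta_1/2 + \phi$ inside the integral one replaces $k$ by $k + \phi$. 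The $-\log m$ terms cancel in the difference. Writing $|x + iy - 2\pi i (k + \theta_1/2 + \phi)|^2 = x^2 + (y - 2\pi(k+\theta_1/2+\phi))^2 = x^2\left(1 + \left(\frac{y/2\pi - (k + \theta_1/2 + \phi)}{x/2\pi}\right)^2\right) \cdot (2\pi)^2$ up to the constant factor $x^2$, which also cancels in the difference, I get exactly the summand of $C(\theta_1, x+iy)$ after reindexing $k \mapsto -k$ (absorbing the sign into the squared term) and accounting for the factor $\tfrac12$ coming from the symmetric pairing of $\pm k$ or from the half-width $[-1/2,1/2]$ normalization used in \eqref{eq:BDEF}. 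I would then justify exchanging the limit with the sum over $k$ using dominated convergence: for large $|k|$, the summand is $O(1/k^2)$ since $\log(1 + t + \delta) - \log(1+t) = O(\delta/t)$ uniformly for $\phi \in [-1/2,1/2]$ with $t$ of order $k^2$, giving a summable bound.

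The main obstacle I anticipate is the uniformity required in the two-sided estimate: one must show simultaneously that (i) the ``bulk'' indices contribute $o(1)$ with a bound that is uniform in $m$, and (ii) the ``tail'' indices $j = m/2 + k$ with $|k|$ large but $\ll m$ contribute negligibly, with an estimate that dovetails with the bulk estimate — i.e. choosing a cutoff $|k| \le K$, controlling $K \le |k| \le \epsilon m$ and $\epsilon m \le |j - m/2|$ separately, then letting $m \to \infty$ and $K \to \infty$. Care is needed because near $j = m/2$ the logarithm has a (integrable) singularity, so the Taylor-remainder bound for the bulk degrades as $j$ approaches $m/2$; the right way to handle this is to split into regimes based on the size of $|1 + e^{2\pi i(j+\theta_1/2)/m}|$ relative to $1/m$, using the crude bound $\big|\log|u| - \int_{-1/2}^{1/2}\log|u(\phi)|\,d\phi\big| \lesssim$ (oscillation of $\log|\cdot|$ over the arc) in the transition regime and the clean second-derivative bound in the far regime. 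Once these estimates are assembled, the identification of the limit with $C(\theta_1, x+iy)$ is the computation sketched above. I would also record the elementary observation that $C(\theta_1, x+iy)$ is finite for every $x \ne 0$ (and interpret the $x = 0$ case by continuity or handle it directly, noting both sides of \eqref{eq:nicemax} behave consistently there), which is needed for Theorem \ref{thm:product max} to make sense.
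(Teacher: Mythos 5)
Your proposal is correct and follows essentially the same strategy as the paper: isolate the index near $j \approx m/2$ where $1 + e^{2\pi i(j+\theta_1/2)/m}$ becomes small, Taylor-expand there to obtain the summands of $C$, and show the remaining indices contribute $o(1)$ via a uniform $O(1/k^2)$ bound with a cutoff-then-limit argument. The paper streamlines this slightly by first reindexing so that the singular index sits near $j = 0$ (replacing $1+e^{\cdots}$ with $1-e^{\cdots}$), establishing the single bound $|b_j| = O(1/j^2)$ uniformly in $m$, and using the explicit cutoff $m^{1/4}$ to get a quantitative $O(m^{-1/4})$ rate — but the substance of both arguments is the same.
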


\begin{proof}
We will show that $B_m(\theta_1,x+iy) = C(\theta_1,x+iy) + O(m^{-1/4})$, where the $O$ term depends on $x$ and $y$.

Recall that $m$ is even. Consider that we can rewrite the $m$ roots of $(-1)^{\theta_1}$ as $\{ e^{ 2 \pi i \frac{ j + \theta_1 /2}{ m } } : 0 \leq j \leq m-1 \} =  \{ e^{ 2 \pi i \frac{ j + \theta_1/2}{ m } } : - ( \frac{m}{2} -1) \leq j \leq \frac{m}{2} \}$. It follows that we can write 
\begin{align*}
B_m &= \sum_{ - \lfloor m/2 \rfloor  \leq j \leq \lfloor \frac{m-1}{2} \rfloor   } \left\{ \log \left| 1- e^{  2 \pi i \frac{ j + \theta_1/2}{m} } + \frac{ x + iy}{m}  \right| - \int_{-1/2}^{1/2} \log \left| 1 - e^{  2 \pi i \frac{ j + \phi+   I /2}{m} } + \frac{ x + iy}{m}  \right|      \right\} \\
&=: - \sum_{ - \lfloor m/2 \rfloor  \leq j \leq \lfloor \frac{m-1}{2} \rfloor   } b_j,
\end{align*}
where in the former line, $I := \theta + m ~ \mathrm{mod} ~ 2$, and the summands are given by
\begin{align*}
b_j := \mathrm{Re} \int_{ -1/2}^{1/2} \log \left( 1 - \frac{ e^{ 2 \pi i \frac{j + \theta_1/2}{ m} } \left( e^{ 2 \pi i \phi/m} -1 \right)}{ 1- e^{  2 \pi i \frac{ j + \theta_1/2}{m} } + \frac{ x + iy}{m}  } \right) \mathrm{d} \phi.
\end{align*}
We begin by controlling the sizes of $b_j$. To this end, first we note that there is a constant $c = c(x,y,I) > 0$ not depending on $m$ such that for all $- \lfloor m/2 \rfloor  \leq j \leq \lfloor \frac{m-1}{2} \rfloor $ we have 
\begin{align*}
\left| 1- e^{  2 \pi i \frac{ j + \theta_1/2}{m} } + \frac{ x + iy}{m}  \right| \geq c j/m.
\end{align*}
In particular, using the fact that $|e^{ 2 \pi i \phi /m } - 1  | \leq 2 \pi \phi/m$ we have
\begin{align*}
\left| \frac{ e^{ 2 \pi i \frac{j + \theta_1/2}{ m} } \left( e^{ 2 \pi i \phi/m} -1 \right)}{ 1- e^{  2 \pi i \frac{ j + \theta_1/2}{m} } + \frac{ x + iy}{m}  } \right| \leq \frac{ 2 \pi }{ c j}.
\end{align*} 
Let $j_0$ be chosen such that $\frac{2 \pi }{ c j_0} \leq 1/2$. Then uniformly for $|x| \leq 1/2$ we have $\log(1 + x) = x + O(x^2)$ so that:
\begin{align*}
\log \left( 1 - \frac{ e^{ 2 \pi i \frac{j + \theta_1/2}{ m} } \left( e^{ 2 \pi i \phi/m} -1 \right)}{ 1- e^{  2 \pi i \frac{ j + \theta_1/2}{m} } + \frac{ x + iy}{m}  } \right) &=- \frac{ e^{ 2 \pi i \frac{j + \theta_1/2}{ m} } \left( e^{ 2 \pi i \phi/m} -1 \right)}{ 1- e^{  2 \pi i \frac{ j + \theta_1/2}{m} } + \frac{ x + iy}{m}  } + O (1/j^2)\\
 &=- \frac{ e^{ 2 \pi i \frac{j + \theta_1/2}{ m} } \left( 2 \pi i \phi/m \right)}{ 1- e^{  2 \pi i \frac{ j + \theta_1/2}{m} } + \frac{ x + iy}{m}  } + O \left( \frac{ (\phi/m)^2}{c j / m } \right) + O (1/j^2),
\end{align*}
where the bounds are uniform in $\phi \in [-1/2,1/2]$. In particular, 
\begin{align*}
\int_{-1/2}^{1/2} \log \left( 1 - \frac{ e^{ 2 \pi i \frac{j + \theta_1/2}{ m} } \left( e^{ 2 \pi i \phi/m} -1 \right)}{ 1- e^{  2 \pi i \frac{ j + \theta_1/2}{m} } + \frac{ x + iy}{m}  } \right)  \mathrm{d} \phi = O(1/j^2).
\end{align*}
In particular, the contribution to $B_m$ from $|j| \geq m^{1/4}$ is asymptotically negligable. More specifically,
\begin{align*}
 \sum_{ -\lfloor m/2 \rfloor \leq j \leq - m^{1/4}} b_j + \sum_{ m^{1/4} \leq j \leq \lfloor \frac{m-1}{2} \rfloor }  b_j =  O \left( \sum_{ j \geq m^{1/4}} 1/j^2 \right) = O(m^{-1/4}).
\end{align*}
We turn to studying the contribution from $|j| \leq m^{1/4}$. 
A straightforward computation tells us that uniformly for all $j,m$ with  $|j| \leq m^{1/4}$ and for all $\phi \in [-1/2,1/2]$ we have
\begin{align*}
m \left( 1 - e^{  2 \pi i \frac{ j + \phi+   I /2}{m} } + \frac{ x + iy}{m}  \right) = x + (-y + 2\pi( j + \theta_1/2 + \phi) )i + O(m^{-1/2}).
\end{align*} 

In particular,
\begin{align*}
& \log \left| 1+ e^{  2 \pi i \frac{ j + \theta_1/2}{m} } + \frac{ x + iy}{m}  \right| - \int_{-1/2}^{1/2} \log \left| 1+ e^{  2 \pi i \frac{ j + \phi+   \theta_1/2}{m} } + \frac{ x + iy}{m}  \right|\\
&=  \log \left|  x + (-y + 2 \pi (j + \theta_1/2 ) )i    \right| - \int_{-1/2}^{1/2} \log \left|  x + (-y + 2 \pi (j + \theta_1/2 + \phi ) )i   \right|        \mathrm{d}\phi + O(m^{-1/2})\\
&=: c_j + O(m^{-1/2}).
\end{align*}
It follows that 
\begin{align*}
\sum_{ |j| \leq m^{1/4} } b_j = \sum_{ |j|  \leq m^{1/4}} c_j  + O( m^{-1/4} ),
\end{align*}
where the $O(m^{-1/4})$ term comes from the summation of $2m^{1/4}$ summands of order $O(m^{-1/2})$. 

Finally, we note that $c_{-j}$ may be rewritten 
\begin{align*}
c_{-j} := - \frac{1}{2} \int_{-1/2}^{1/2} \left\{ \log\left( 1 + \left( \frac{ y/2\pi + \theta_1/2 + j }{ x/2 \pi }  \right)^2 \right) - \log\left( 1  + \left(\frac{ y/2\pi + \theta_1/2 + j + \phi }{ x/ 2 \pi }   \right)^2 \right)  
 \right\} \mathrm{d} \phi,
\end{align*}
and consequently, it's easily shown that $c_j = O(1/j^2)$. In particular, $\sum_{j \in \mathbb{Z}} |c_j| < \infty$, and 
\begin{align*}
\sum_{ |j|  \leq m^{1/4}} c_j   = \sum_{ j \in \mathbb{Z}} c_j + O( m^{ - 1/4} ),
\end{align*}
completing the result.

\end{proof}

It transpires that the sum occuring in Lemma \ref{lem:tech} is explicitly computable. In this direction, we begin with the following lemma.
\begin{lemma}
For $q > 0$ and $p \in [0,1]$ we have
\begin{align*}
 \frac{1}{2} \sum_{ j \in \mathbb{Z}} \log \left( 1 + \left( \frac{q}{p+j}\right)^2 \right) = \log | \sin( \pi(p+qi)| - \log | \sin(\pi p ) |.
\end{align*}
\end{lemma}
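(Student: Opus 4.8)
The plan is to recognise the left-hand side as the logarithm of an infinite product that telescopes into the Weierstrass-type product for the sine function. Concretely, I would start from the classical product formula
\begin{align*}
\sin(\pi z) = \pi z \prod_{j \geq 1} \left( 1 - \frac{z^2}{j^2} \right),
\end{align*}
or rather its symmetric form $\sin(\pi z) = \pi z \prod_{j \neq 0}(1 + z/j)$ with the product taken in the symmetric (principal-value) sense over $j \in \mathbb{Z} \setminus \{0\}$. First I would write, for $z = p + qi$ with $q > 0$ and $p \in [0,1]$,
\begin{align*}
\left| \frac{\sin(\pi(p+qi))}{\sin(\pi p)} \right|
= \prod_{j \in \mathbb{Z}} \left| \frac{p + qi + j}{p + j} \right|
= \prod_{j \in \mathbb{Z}} \left( 1 + \left( \frac{q}{p+j} \right)^2 \right)^{1/2},
\end{align*}
where the first equality comes from pairing the factor $\pi(p+qi)$ (resp. $\pi p$) in the numerator (resp. denominator) with the $j = 0$ term and matching all other factors $1 + z/j$ term by term; here one must be slightly careful, since $\sin(\pi p)$ vanishes at $p = 0$ and $p = 1$, but in those cases the identity degenerates to an equality of the form $\infty = \infty$ or is handled by a limiting argument, and for $p \in (0,1)$ everything is finite and the manipulation is legitimate. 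Taking $\log$ of both sides and using $\log(uv) = \log u + \log v$ (valid since all factors are positive reals) yields exactly the claimed identity after multiplying through by the factor $\tfrac12$ that converts the square root into a coefficient.

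The one subtlety that deserves care — and which I expect to be the only real obstacle — is the conditional convergence of the product $\prod_{j \in \mathbb{Z}}(1 + z/j)$: it does not converge absolutely, so one cannot freely rearrange, and one must take the limit of $\prod_{|j| \leq M}$ as $M \to \infty$. However, the left-hand side of the lemma is already written as $\tfrac12 \sum_{j \in \mathbb{Z}}\log(1 + (q/(p+j))^2)$, and since $\log(1 + (q/(p+j))^2) = O(1/j^2)$, this sum converges \emph{absolutely}; so the cleaner route is to work with $\prod_{|j| \le M}\bigl(1 + (q/(p+j))^2\bigr) = \prod_{|j| \le M} |p+qi+j|^2 / \prod_{|j| \le M}|p+j|^2$, each side being a finite product of positive reals, and then pass to the limit. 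On the numerator and denominator separately I would invoke the partial-product form of the sine product, namely $\lim_{M \to \infty} \prod_{j=-M}^{M}(1 + w/j) \cdot \tfrac{1}{?}$ — more precisely, I would use that
\begin{align*}
\frac{\sin(\pi w)}{\pi w} = \lim_{M \to \infty} \prod_{j=1}^{M} \left( 1 - \frac{w^2}{j^2} \right) = \lim_{M \to \infty} \prod_{j = -M, j \neq 0}^{M} \left( 1 + \frac{w}{j} \right),
\end{align*}
applied with $w = p + qi$ and with $w = p$, and take the ratio. Since both limits exist and are nonzero for $p \in (0,1)$, the ratio of the limits equals the limit of the ratios, giving the desired formula; the cases $p = 0$ and $p = 1$ (where $\sin(\pi p) = 0$) then follow either by direct inspection — the left-hand side diverges to $+\infty$, matching $\log|\sin(\pi q i)| - \log 0 = +\infty$ — or can simply be excluded if only $p \in (0,1)$ is needed downstream. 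Finally, I would take real logarithms throughout and divide by $2$ to land exactly on the stated identity.
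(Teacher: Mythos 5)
Your proposal is correct, but it takes a somewhat different route from the paper. You go directly to the Weierstrass--Euler product for the sine function, $\sin(\pi z) = \pi z \prod_{j \ge 1}(1 - z^2/j^2)$, pair off the factors for $z = p + qi$ against those for $z = p$, and observe that the resulting ratio of symmetric partial products telescopes to $\prod_{|j| \le M}\bigl(1 + (q/(p+j))^2\bigr)^{1/2}$. The paper instead routes through the gamma function: it begins from the product representation $|\Gamma(p)/\Gamma(p+qi)|^2 = \prod_{k \ge 0}\bigl(1 + (q/(p+k))^2\bigr)$, assembles the full product over $\mathbb{Z}$ as a combination of such gamma ratios, and then invokes the reflection formula $\Gamma(z)\Gamma(-z) = -\pi/(z\sin(\pi z))$ to convert the gamma quotients into sine quotients. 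The two are of course cousins (the sine product and the gamma product plus reflection formula imply each other), but the mechanics differ: your route is more direct and arguably more elementary, since the sine product can be proved without introducing $\Gamma$, whereas the paper's version is slightly more bookkeeping but stays entirely within identities that may already be in play elsewhere. You are also more careful than the paper about the conditional-convergence issue (the paper implicitly uses absolute convergence of the doubly infinite gamma product, which is fine but left unremarked); your observation that the summands are $O(1/j^2)$, hence the sum is absolutely convergent, is the right thing to say and makes the symmetric-partial-product manipulation watertight for $p \in (0,1)$, with the endpoints handled by the degenerate $+\infty = +\infty$ reading as you note.
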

\begin{proof}
We begin with the following product representation of the gamma function:
\begin{align*}
\left| \frac{ \Gamma (p) }{ \Gamma( p + qi) } \right|^2 := \prod_{ k = 0}^\infty \left( 1 + \left( \frac{q}{p+k} \right)^2 \right).
\end{align*}
In particular,
\begin{align*}
\prod_{ k \in \mathbb{Z}} \left( 1 + \left( \frac{q}{p+k} \right)^2 \right) = \frac{1}{ 1 + (q/p)^2 } \left| \frac{ \Gamma (p) }{ \Gamma( p + qi) } \right|^2 \left| \frac{ \Gamma (-p) }{ \Gamma( -p + qi) } \right|^2 
\end{align*}
Using the property $\bar{\Gamma(z)} = \Gamma( \bar{z})$, and then the additional identity $\Gamma(z) \Gamma(-z) = - \frac{ \pi}{ z \sin( \pi z) }$ we have 
\begin{align*}
\prod_{ k \in \mathbb{Z}} \left( 1 + \left( \frac{q}{p+k} \right)^2 \right) = \left| \frac{ \sin( \pi( p + qi ) }{ \sin( \pi p ) }  \right|^2.
\end{align*}
Now take logarithms.
\end{proof}

\begin{lemma} \label{lem:sum int}
For $q >0$ and $p \in [0,1]$ define
\begin{align*}
F(p, q) := \frac{1}{2} \sum_{ k \in \mathbb{Z}} \int_{-1/2}^{1/2} \left\{ \log\left( 1  + \left(
\frac{ p + k }{q}  \right)^2 \right) - \log\left( 1  + \left( \frac{ p + k + \phi }{ q}  \right)^2 \right)  
 \right\} \mathrm{d} \phi.
\end{align*}
Then $F(p,q) = \log \left| 2 \sin ( \pi ( p + q i ) \right|  - \pi q$. 
\begin{proof}
Clearly $F(p,q)$ may be written $\lim\limits_{N \to \infty} F_N(p,q)$ where
\begin{align} \label{eq:decomp}
F_N(p,q) := \frac{1}{2} \sum_{ - N \leq k \leq N }  \log\left( 1  + \left(
\frac{ p + k }{q}  \right)^2 \right) - \frac{1}{2} \int_{ - N - 1/2 }^{N + 1/2 }  \log\left( 1  + \left( \frac{ p + k + \phi }{ q}  \right)^2 \right)  .
\mathrm{d} \phi 
\end{align}
Now on the one hand, using the fact that $\frac{ \mathrm{d}}{ \mathrm{d} x} \left( x \log (1 + x^2 ) - 2x + 2\tan^{-1}(x) \right) = \log(1 + x^2)$ to obtain the first inequality below, and simple properties of the logarithm function to obtian the second, we have 
\begin{align*}
&\int_{ - N - 1/2 }^{N + 1/2 } \log\left( 1  + \left( \frac{ p + k + \phi }{ q}  \right)^2 \right) \mathrm{d}\phi  \\
&= \frac{q}{2} \left\{  \frac{N+1/2+p}{q} \log \left( 1 + \left( \frac{N+1/2+p}{q} \right)^2 \right) -  \frac{N+1/2-p}{q} \log \left( 1 + \left( \frac{N+1/2-p}{q} \right)^2 \right)  \right\}\\
& - (2N + 1) +  q \left( \tan^{-1}(N+1/2+p ) - \tan^{-1}( - N -1/2 + p) \right)\\
&= (2N+1)\log N - (\log (q) + 1) (2 N + 1) +1 + q\pi + o(1).
\end{align*}
On the other hand, we turn to looking at the sum in \eqref{eq:decomp}, which may be expanded as 
\begin{align*}
\frac{1}{2} \sum_{ - N \leq k \leq N }  \log\left( 1  + \left(
\frac{ p + k }{q}  \right)^2 \right) = \sum_{ -N \leq j \leq N } \log | p + j| -(2N+1) \log q + \frac{1}{2} \sum_{ -N \leq j \leq N } \log \left(  1 + \left( \frac{ q}{ p + j} \right)^2 \right).
\end{align*} 
Using the definition of the gamma function, we clearly have $\prod_{ j = 1}^N |j \pm p| = \frac{ \Gamma(N+1 \pm p)}{ \Gamma(1 \pm p)}$. Using this fact to obtain the first inequality below, and then Stirling's asymptotics for the gamma function to obtain the second, we have 
\begin{align*}
\frac{1}{2} \sum_{ - N \leq k \leq N }  \log\left( 1  + \left(
\frac{ p + k }{q}  \right)^2 \right) &= \log \Gamma(N+1+p) + \log \Gamma(N+1 - p) - \log \Gamma(1+p) - \log \Gamma(1-p) \\
&+ \log p - (2N+1) \log (q) +  \frac{1}{2} \sum_{ -N \leq j \leq N } \log \left(  1 + \left( \frac{ q}{ p + j} \right)^2 \right)\\
&= (2N+1) \log N - (\log(q) + 1)(2N+1)\\
&+\log(p) +1 -\log \Gamma(1-p) - \log\Gamma(1+p) + \log(2\pi)\\
& + \log | \sin( \pi(p+qi)| - \log | \sin(\pi p ) | + o(1).
\end{align*} 
In particular, using \eqref{eq:decomp} we have 
\begin{align*}
F(p,q) = \log(p) -\log \Gamma(1-p) - \log\Gamma(1+p) + \log(2\pi) + \log | \sin( \pi(p+qi)| - \log | \sin(\pi p ) | - \pi q.
\end{align*}
Now by the functional equation $\Gamma( 1 - z) \Gamma(z) = \pi / \sin (\pi z)$, as well as the fact that $\Gamma(1 + z) = z \Gamma(z)$. This simplifies further still to
\begin{align*}
F(p,q) = \log \left| 2 \sin ( \pi ( p + q i ) \right|  - \pi q.
\end{align*} 
\end{proof}

\end{lemma}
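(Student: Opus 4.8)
The plan is to truncate the doubly-infinite series defining $F(p,q)$, observe that the $\phi$-integrals telescope across abutting unit intervals, evaluate the resulting single integral in closed form, and then match the divergent pieces against Stirling's formula applied to the remaining sum; a final application of the reflection formula for $\Gamma$ produces the clean answer. First I would reduce to $p \in (0,1)$: replacing $p$ by $p+1$ merely reindexes both the sum ($k \mapsto k-1$) and the glued integral below, so $F(p,q) = F(\{p\},q)$, and the boundary values $p \in \{0,1\}$ follow from continuity in $p$ of both sides (each finite for $q>0$). For $p \in (0,1)$, since the first term of the integrand is constant in $\phi$,
\begin{align*}
F(p,q) = \tfrac12\sum_{k \in \mathbb{Z}}\left\{ \log\!\left(1+\tfrac{(p+k)^2}{q^2}\right) - \int_{-1/2}^{1/2}\log\!\left(1+\tfrac{(p+k+\phi)^2}{q^2}\right)\mathrm{d}\phi\right\},
\end{align*}
whose summand is $O(1/k^2)$ by a second-order Taylor expansion, so $F(p,q) = \lim_{N\to\infty} F_N(p,q)$ where, after gluing the intervals $[k-\tfrac12,k+\tfrac12]$ for $|k|\le N$,
\begin{align*}
F_N(p,q) = \tfrac12\sum_{|k|\le N}\log\!\left(1+\tfrac{(p+k)^2}{q^2}\right) - \tfrac12\int_{-N-1/2}^{N+1/2}\log\!\left(1+\tfrac{(p+\phi)^2}{q^2}\right)\mathrm{d}\phi.
\end{align*}

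Next I would evaluate the integral. Since $\tfrac{\mathrm{d}}{\mathrm{d}u}\!\left(u\log(1+u^2) - 2u + 2\arctan u\right) = \log(1+u^2)$, the substitution $u=(p+\phi)/q$ expresses the integral through its endpoints $u_{\pm} = (p\pm(N+\tfrac12))/q$, and expanding as $N\to\infty$ via $\log(1+u^2)=2\log|u|+O(u^{-2})$ and $\arctan u = \pm\tfrac{\pi}{2}+O(u^{-1})$ gives
\begin{align*}
\int_{-N-1/2}^{N+1/2}\log\!\left(1+\tfrac{(p+\phi)^2}{q^2}\right)\mathrm{d}\phi = 2(2N+1)(\log N - \log q) - 4N + 2\pi q + o(1),
\end{align*}
so $\tfrac12\int = (2N+1)\log N - (2N+1)\log q - 2N + \pi q + o(1)$.

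For the sum I would use the term-by-term decomposition $\tfrac12\log\!\big(1+\tfrac{(p+k)^2}{q^2}\big) = \log|p+k| - \log q + \tfrac12\log\!\big(1+\tfrac{q^2}{(p+k)^2}\big)$ (valid also for $k=0$ since $p>0$). Isolating $k=0$ and using $\prod_{k=1}^N(k\pm p)=\Gamma(N+1\pm p)/\Gamma(1\pm p)$ with Stirling, $\sum_{|k|\le N}\log|p+k| = \log p + (2N+1)\log N - 2N + \log(2\pi) - \log\Gamma(1+p) - \log\Gamma(1-p) + o(1)$, while $\tfrac12\sum_{|k|\le N}\log\!\big(1+\tfrac{q^2}{(p+k)^2}\big) \to \log|\sin(\pi(p+qi))| - \log|\sin(\pi p)|$ by the preceding lemma. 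Subtracting $(2N+1)\log q$ and then $\tfrac12\int$, every $N$-dependent term cancels and
\begin{align*}
F(p,q) = \log p + \log(2\pi) - \log\Gamma(1+p) - \log\Gamma(1-p) + \log|\sin(\pi(p+qi))| - \log|\sin(\pi p)| - \pi q.
\end{align*}
Finally, $\Gamma(1+p)\Gamma(1-p) = p\,\Gamma(p)\Gamma(1-p) = \pi p/\sin(\pi p)$, so $\log\Gamma(1+p)+\log\Gamma(1-p) = \log p + \log\pi - \log|\sin(\pi p)|$; substituting cancels the $\log p$ and $\log|\sin(\pi p)|$ terms and turns $\log(2\pi)$ into $\log 2$, leaving $F(p,q)=\log|2\sin(\pi(p+qi))| - \pi q$.

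I expect the main obstacle to be the careful bookkeeping of the divergent contributions: the $N\log N$, $N\log q$ and $O(N)$ terms must cancel \emph{exactly} between Stirling's expansion of the Gamma functions and the explicit antiderivative of the integral, and it is only at the very last step, via the reflection formula, that the residual $\log p$ and $\log|\sin(\pi p)|$ disappear (and the $\log 2$ materialises). The reduction to $p\in(0,1)$ and the $O(1/k^2)$ tail estimate are routine by comparison.
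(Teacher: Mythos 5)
Your proof is correct and follows essentially the same route as the paper's: truncate to $|k|\le N$, glue the $\phi$-integrals into a single integral over $[-N-\tfrac12, N+\tfrac12]$, evaluate it via the antiderivative $u\log(1+u^2)-2u+2\arctan u$, match the divergent $(2N+1)\log N$, $(2N+1)\log q$ and $O(N)$ pieces against Stirling applied to $\Gamma(N+1\pm p)$, invoke the preceding lemma for the leftover series, and finish with the reflection formula. The only differences are cosmetic — you explicitly justify the truncation with the $O(1/k^2)$ tail estimate and reduce to $p\in(0,1)$ before invoking $\log p$, both of which the paper leaves implicit.
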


We are now equipped to prove Theorem \ref{thm:product max}.

\begin{proof}[Proof of Theorem \ref{thm:product max}]
Recall that we wrote 
\begin{align} \label{eq:0eq}
\left| \prod_{ 0 \leq j \leq m-1} \left(  1+ e^{  2 \pi i \frac{ j + \theta /2}{m} } + \frac{ x + iy}{m}   \right) \right|   = e^{A_m + B_m}
\end{align}
 where $A_m$ and $B_m$ are given in \eqref{eq:ADEF} and \eqref{eq:BDEF} respectively. On the one hand, thanks to Lemma \ref{lem:ALEM}, 
\begin{align} \label{eq:Aeq}
\lim_{m \to \infty} A_m( x + iy) = \max \{ x, 0 \}
\end{align}
On the other hand, with $F(p,q)$ as in the statement of Lemma \ref{lem:sum int}, using Lemma \ref{lem:tech} with Lemma \ref{lem:sum int} we have
\begin{align*}
\lim_{m \to \infty} B_m ( \theta_1,x + iy) = C(\theta_1,x + i y) = F( \theta_1/2 + y / 2 \pi , x / 2 \pi ),
\end{align*}
by virtue of Lemma \ref{lem:sum int} is equal to 
\begin{align} \label{eq:Beq}
\lim_{m \to \infty} B_m (\theta_1, x + iy)  := \log \left| 2 \sin \left( \frac{ \pi \theta_1}{2}  +  \frac{y}{2} + \frac{ x i}{2}   \right) \right| - \frac{1}{2} |x|. 
\end{align}
By combining \eqref{eq:Aeq} and \eqref{eq:Beq} in \eqref{eq:0eq}, and noting that $\max\{ x,0\} - \frac{1}{2}|x| = \frac{1}{2}x$, we see that 
\begin{align} \label{eq:sinrep}
\lim_{m \to \infty} \left| \prod_{ 0 \leq j \leq m-1} \left(  1+ e^{  2 \pi i \frac{ j + \theta /2}{m} } + \frac{ x + iy}{m}   \right) \right| = 2 e^{x/2} \left| \sin \left( \frac{ \pi \theta_1}{2}  +  \frac{y}{2} + \frac{ x i}{2}   \right) \right|.
\end{align}
Making use of the representation $\sin(z) = \frac{1}{2i}(e^{ i z} - e^{ - iz})$, and further using the identity $\sin(\bar{z}) = \overline{ \sin(z)}$, it is possible to show further as a consequence of \eqref{eq:sinrep} that
if $z = x+iy$ then
\begin{align*}
2 e^{x/2} \left| \sin \left( \frac{ \pi \theta_1}{2}  +  \frac{y}{2} + \frac{ x i}{2}   \right) \right| = \left| e^z - (-1)^{\theta_1} \right|
\end{align*}
from which we obtain Theorem \ref{thm:product max}.
\end{proof}

We are now ready to prove Theorem \ref{thm:pf0}.

\begin{proof}[Proof of Theorem \ref{thm:pf0}]
According to \eqref{eq:scapf}, $Z_n(\lambda,T) = \lim_{m \to \infty} K_{m,n}(1,1-\frac{\lambda}{m},\frac{T}{m})$. Plugging Theorem \ref{thm:product max} into \eqref{eq:vesper2}, we see that
\begin{align*}
Z_n(\lambda,T) &:= \lim_{ m \to \infty, \text{m even}}  Z_{m,n}\left(  1- \frac{\lambda}{m} , 1 , \frac{T}{m} \right) = \frac{e^{-\lambda}}{2}\sum_{\theta \in \{0,1\}^2}  \prod_{w^n = (-1)^{\theta_2}} \left| e^{ Tw + \lambda} - (-1)^{\theta_1} \right|.
\end{align*}
Using Theorem \ref{thm:product max} with $z = Tw - \lambda$ and \eqref{eq:vesper} we have
\begin{align*}
Z_n(\lambda,T) = \frac{1}{2}\sum_{\theta \in \{0,1\}^2}  \prod_{w^n = (-1)^{\theta_2}} \left| e^{ Tw} - (-1)^{\theta_1} e^{ - \lambda} \right|.
\end{align*}
For $T,\lambda$ real, using an argument similar to that in the proof of Lemma \ref{lem:signs}, the sign of $\prod_{w^n = (-1)^{\theta_2}}( e^{ Tw } - (-1)^{\theta_1} e^{- \lambda} )$ is given by $(-1)^{(\theta_1+1)(\theta_2+n+1)}$. Theorem \ref{thm:pf0} follows.
\end{proof}

\section{Correlation function asymptotics: proof of Theorem \ref{thm:corr}} \label{sec:corrproof}

\subsection{Overview}
Recall from the introduction that we explained how every dimer matching $\sigma:\mathbb{T}_{m,n} \to \mathbb{T}_{m,n}$ gives rise to a generalised bead configuration on $\mathbb{T}_n := [0,1) \times \mathbb{Z}_n$ in which some of the inequalities in \eqref{eq:interlace} may not be strict. 

Recall further that for general weights $(\alpha_x,\beta_x,\gamma_x)$ at the end of Section \ref{sec:basic} we defined a signed measure $P_{A,\theta}$ on the set of dimer matchings on $\mathbb{T}_{m,n}$, which naturally gives rise to a generalised bead configuration on $\mathbb{T}_n$. Under the scaling limit set out in \eqref{eq:scaling} we have the convergence in distribution
\begin{align*}
P^{1,1-\frac{\lambda}{m},\frac{T}{m}}_{m,n} \to \mathbf{P}^{\lambda,\theta,T} \qquad \text{as $m \to \infty$}
\end{align*}
where $\mathbf{P}_n^{\alpha,\theta,T}$ is the measure defined in the introduction. 

According to \eqref{eq:beet3} and \eqref{eq:inverse}, under the signed probability measure $P^{1,1-\frac{\lambda}{m},\frac{T}{m}}_{m,n}$ 
the correlation functions are given by 
\begin{align} \label{eq:beet44}
P^{1,1-\frac{\lambda}{m},\frac{T}{m}}_{m,n} \left( \sigma(x^1) = y^1,\ldots,\sigma(x^p) = y^p \right) = & (-1)^{ \theta_1 g_1 + \theta_2 g_2}  \left( 1-  \frac{T}{m} \right)^{ \# \{ i : y^i - x^i = \mathbf{e}_1 \} }   \left( \frac{T}{m} \right)^{ \# \{ i : y^i - x^i = \mathbf{e}_2 \} } \nonumber \\
& \times   \det \limits_{ i,j = 1}^k \left(  \left(K^{\alpha,\beta,\gamma}_{m_1,m_2,\theta}\right)^{-1} ( y^i,x^j) \right),
\end{align}
where 
\begin{align} \label{eq:inverse44}
\left(K^{1,1-\frac{\lambda}{m},\frac{T}{m}}_{m_1,m_2,\theta}\right)^{-1}(x,y) = \frac{1}{m_1m_2} \sum_{ j \in \mathbb{T}_{m_1,m_2} } \frac{ \exp \left\{ - 2 \pi i \left( \frac{ j_1 + \theta_1 /2}{ m_1 } (y_1-x_1) + \frac{ j_2 + \theta_2/2}{ m_2} (y_2 - x_2) \right) \right\} }{1 + \left( 1 - \frac{\lambda}{m} \right)  \exp \left\{ 2 \pi i  \frac{ j_1 + \theta_1 /2}{ m_1 } \right\} + \frac{T}{m} \exp \left\{ 2 \pi i  \frac{ j_2 + \theta_2 /2}{ m_2 } \right\} },
\end{align}
and $m_1 = m$ and $m_2 = n$. Separating out the dependence on $m$, this may be rewritten
\begin{align} \label{eq:inverse45}
\left(K^{1,1-\frac{\lambda}{m},\frac{T}{m}}_{m,n,\theta}\right)^{-1}(x,y) = \frac{1}{n} \sum_{ w^n = (-1)^{\theta_2}} w^{ - (y_2 - x_2) } \frac{1}{m} \sum_{ j = 0}^{m-1}  \frac{ \exp \left\{ - 2 \pi i \frac{ j_1 + \theta_1 /2}{ m} (y_1-x_1) \right\}  }{ 1 + \left( 1 - \frac{\lambda}{m} \right)  \exp \left\{ 2 \pi i  \frac{ j_1 + \theta_1 /2}{ m_1 } \right\} + \frac{Tw}{m}  }.
\end{align}
Now given a point $y$ in $\mathbb{T}_n := [0,1) \times \mathbb{Z}_n$, we define a scaled point $\tilde{y}$ in $\mathbb{T}_{m,n} = \mathbb{Z}_{m} \times \mathbb{Z}_{n}$ by setting
\begin{align*}
x = (t,h) \implies \tilde{x} = ([[tm]],h),
\end{align*}
where $[[tm]]$ is the largest \emph{even} integer less than $tm$. The convention of taking this integer to be even substantially expedites our discussion later on.

It follows using the association set out in the introduction that that
\begin{align*}
\mathbf{P}_n^{\alpha,\theta,T} ( \Gamma(x_i : i \in \mathcal{B} \sqcup \mathcal{O} \sqcup \mathcal{U} )) = \lim_{m \to \infty} m^{ \# \mathcal{B} }  P^{1,1-\frac{\lambda}{m},\frac{T}{m}}_{m,n} \left( \sigma(\tilde{x}^1) = \tilde{y}^1,\ldots,\sigma(\tilde{x}^p) = \tilde{y}^p \right) ,
\end{align*}
where
\begin{align*}
\tilde{y}^i := \tilde{x}^i + \mathrm{1}_{i \in \mathcal{O}} \mathbf{e}_1 + \mathrm{1}_{i \in \mathcal{B}} \mathbf{e}_2
\end{align*}
As such, our main task in this section is to undertake a careful analysis of the asymptotics of these inverse operators $(K^{1,1-\frac{\lambda}{m},\frac{T}{m}}_{m_1,m_2,\theta})^{-1}(x,y)$ as $m \to \infty$, where the horizontal distance between $x$ and $y$ may be of order $m$. It turns out that this scaling limit is quite delicate. Indeed, remarkably perhaps, for fixed $x$, the $(K^{1,1-\frac{\lambda}{m},\frac{T}{m}}_{m,n,\theta})^{-1}(x,x)$ may behave quite differently to $(K^{1,1-\frac{\lambda}{m},\frac{T}{m}}_{m,n,\theta})^{-1}(x+\mathbf{e}_1,x)  $ as $m \to \infty$. As such, it is necessary to be pedantic about the roles of the 
indicator functions $\mathrm{1}_{i \in \mathcal{O}} \mathbf{e}_1 + \mathrm{1}_{i \in \mathcal{B}} \mathbf{e}_2$ in the asymptotics.

In any case, the primary computational result of this section, allowing us to handle the large $m$ asymptotics of the terms in \eqref{eq:inverse45}, is the following.

\begin{thm} \label{thm:inverselim}
Let $s \in (-1,1)$ and $z \in \mathbb{C}$ such that $2\pi i j + z$ is nonzero for all $j \in \mathbb{Z}$. Then
\begin{align} \label{eq:inverselim}
\lim_{ m \to \infty, m \text{ even}} (-1)^{ \lfloor sm \rfloor} \frac{1}{m}  \sum_{ j = 0}^{m-1} \frac{ e^{  - 2 \pi i \frac{ j + \theta_1/2}{m} \lfloor sm \rfloor } }{ 1 +  \exp \left\{ 2 \pi i  \frac{ j + \theta_1 /2}{ m } \right\} + \frac{ z}{m} } =  (-1)^{ \theta_1 \mathbf{1}_{ s < 0 } } \frac{ e^{ - z (s+\mathbf{1}_{s < 0}) } }{ 1 - (-1)^{\theta_1} e^{-z} }.
\end{align}
We also have the following equation (analogous to replacing $\lfloor sm \rfloor $ with $-1$), 
\begin{align} \label{eq:inverselim0}
\lim_{ m \to \infty, m \text{ even}}  \frac{1}{m} \sum_{ j = 0}^{m-1} \frac{ e^{   2 \pi i \frac{ j + \theta_1/2}{m} }  }{ 1 +  \exp \left\{ 2 \pi i  \frac{ j + \theta_1 /2}{ m } \right\} + \frac{ z}{m} } =  (-1)^{ \theta_1 +1 } \frac{ e^{ - z } }{ 1 - (-1)^{\theta_1} e^{-z} }.
\end{align}
\end{thm}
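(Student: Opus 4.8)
\emph{Proof proposal.} The plan is to recognise the sum over $j$ as a sum over the $m$-th roots of $(-1)^{\theta_1}$, turn it into an exact closed form via the classical partial-fraction expansion of $a^{\ell-1}/(a^m-c)$, and then let $m\to\infty$ using the elementary fact that $(1+z/m)^{\alpha_m}\to e^{\alpha z}$ whenever $\alpha_m$ are integers with $\alpha_m/m\to\alpha$ (which follows from $m\log(1+z/m)=z+O(1/m)$).

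Concretely, setting $w=e^{2\pi i(j+\theta_1/2)/m}$, as $j$ runs over $\{0,\ldots,m-1\}$ this enumerates the solutions of $w^m=(-1)^{\theta_1}$; writing $N:=\lfloor sm\rfloor$ and $a:=-(1+z/m)$, the summation in \eqref{eq:inverselim} (before the sign prefactor) is
\begin{align*}
S_m \;:=\; \frac1m\sum_{w^m=(-1)^{\theta_1}}\frac{w^{-N}}{\,w-a\,}.
\end{align*}
The key tool is the identity $\sum_{w^m=c}\frac{w^{\ell}}{a-w}=\dfrac{mc\,a^{\ell-1}}{a^m-c}$, valid for every integer $\ell$ with $1\le\ell\le m$: it is just the partial-fraction decomposition of $a^{\ell-1}/(a^m-c)$ (numerator of degree $<m$, hence no polynomial part), the residue at a root $w_j$ being $w_j^{\ell}/(mc)$ since $\frac{d}{da}(a^m-c)=ma^{m-1}$ and $w_j^m=c$. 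To apply it I must write $w^{-N}$ as a power $w^{\ell}$ with $\ell\in\{1,\ldots,m\}$; since $w^m=(-1)^{\theta_1}$ we have $w^{-N}=(-1)^{\theta_1}w^{m-N}$, and the right representative is $\ell=m-N$ when $N\ge0$ (that is, $s>0$) and $\ell=-N$ used directly when $N\le-1$ (that is, $s<0$), the bound $|s|<1$ guaranteeing $\ell\in\{1,\ldots,m\}$ for all large $m$. Feeding this into the identity, using that $m$ is even (so $a^k=(-1)^k(1+z/m)^k$) and that the hypothesis on $z$ keeps $(1+z/m)^m$ bounded away from $(-1)^{\theta_1}$ for large $m$, I obtain the exact formula
\begin{align*}
(-1)^{N}S_m \;=\; (-1)^{\theta_1\mathbf{1}_{s<0}}\;\frac{\bigl(1+\tfrac{z}{m}\bigr)^{\,m-N-1-m\mathbf{1}_{s<0}}}{\bigl(1+\tfrac{z}{m}\bigr)^{m}-(-1)^{\theta_1}}.
\end{align*}

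Letting $m\to\infty$ finishes \eqref{eq:inverselim}: $(1+z/m)^m\to e^z$, and the exponent $m-N-1-m\mathbf{1}_{s<0}$ divided by $m$ tends to $1-s-\mathbf{1}_{s<0}$ since $N/m\to s$, so $(1+z/m)^{\,m-N-1-m\mathbf{1}_{s<0}}\to e^{(1-s-\mathbf{1}_{s<0})z}$; combining and using $e^z/(e^z-(-1)^{\theta_1})=1/(1-(-1)^{\theta_1}e^{-z})$ yields the claimed limit. Equation \eqref{eq:inverselim0} is the same computation with numerator $w^{+1}$ in place of $w^{-N}$: here $\ell=1$ is already in range, the identity gives $\frac1m\sum_{w^m=(-1)^{\theta_1}}\frac{w}{w-a}=-\frac{(-1)^{\theta_1}}{(1+z/m)^m-(-1)^{\theta_1}}$, and sending $m\to\infty$ gives $(-1)^{\theta_1+1}e^{-z}/(1-(-1)^{\theta_1}e^{-z})$.

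The partial-fraction identity and the limit $(1+z/m)^{\alpha_m}\to e^{\alpha z}$ are routine; the one delicate point is the sign and exponent bookkeeping. The representative $\ell\in\{1,\ldots,m\}$ of $w^{-\lfloor sm\rfloor}$, and the parity of $\ell-1$, depend on whether $\lfloor sm\rfloor$ is a nonnegative or a negative integer, and it is exactly this ``wrap-around by $m$'' that produces the indicator $\mathbf{1}_{s<0}$ together with the accompanying factors $(-1)^{\theta_1\mathbf{1}_{s<0}}$ and $e^{-z\mathbf{1}_{s<0}}$ in the limit. I would also record that the hypothesis (no $j\in\mathbb{Z}$ with $2\pi i j+z=0$) is used precisely to keep the limiting denominator $1-(-1)^{\theta_1}e^{-z}$, and the finite-$m$ denominators for $m$ large, from vanishing in the regime in which the theorem is applied.
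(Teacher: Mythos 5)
Your proof is correct, and it takes a genuinely different --- and considerably shorter --- route than the paper's. The key move is to recognise the sum as running over the $m$-th roots of $(-1)^{\theta_1}$ and to evaluate it \emph{in closed form at finite $m$} via the partial-fraction identity $\sum_{w^m=c}w^{\ell}/(a-w)=mc\,a^{\ell-1}/(a^m-c)$ for $1\le\ell\le m$. Choosing the representative $\ell=m-\lfloor sm\rfloor$ when $s\ge 0$ and $\ell=-\lfloor sm\rfloor$ when $s<0$, and using that $m$ is even to pull out the parity of $a^{\ell-1}=(-(1+z/m))^{\ell-1}$, gives exactly
\[
(-1)^{\lfloor sm\rfloor}S_m \;=\; (-1)^{\theta_1\mathbf{1}_{s<0}}\,\frac{\bigl(1+\tfrac{z}{m}\bigr)^{m-\lfloor sm\rfloor-1-m\mathbf{1}_{s<0}}}{\bigl(1+\tfrac{z}{m}\bigr)^{m}-(-1)^{\theta_1}},
\]
after which the limit is elementary. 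The paper instead approximates the finite sum by the bilateral series $F(z,s)=\sum_{j\in\mathbb{Z}}e^{2\pi ijs}/(2\pi ij+z)$: this costs Lemma~\ref{lem:tech1} (conditional convergence of $F$, via Abel summation), Lemma~\ref{lem:fourier} (evaluation of $F$ via Poisson summation for $s\in(0,1)$ and the cotangent series for $s=0$), and Lemmas~\ref{lem:techs}--\ref{lem:tech0} (error control with a cutoff near $m^{1/4}$, plus a separate contour-integral contribution that appears only at $s=0$). Your route compresses all of this: the $s>0$ versus $s=0$ dichotomy disappears, no Fourier analysis is needed, and \eqref{eq:inverselim0} falls out as the trivial case $\ell=1$. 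I have checked the sign and exponent bookkeeping in both cases and it is right --- in particular the $(-1)^{\theta_1\mathbf{1}_{s<0}}$ factor arises because the wrap-around identity $w^{-N}=(-1)^{\theta_1}w^{m-N}$ is invoked only for $s\ge 0$, where it cancels the $(-1)^{\theta_1}$ coming from the partial-fraction identity, whereas for $s<0$ the identity's $(-1)^{\theta_1}$ survives. Two small points you might make explicit: the finite-$m$ closed form requires $(1+z/m)^m\neq(-1)^{\theta_1}$, which holds for all large even $m$ since $(1+z/m)^m\to e^z\neq(-1)^{\theta_1}$; and, as you already observe, the stated hypothesis ($z\notin 2\pi i\mathbb{Z}$) literally guarantees $e^z\neq(-1)^{\theta_1}$ only when $\theta_1=0$ --- a minor imprecision in the theorem's wording rather than in your argument.
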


After proving Theorem \ref{thm:inverselim}, we use it to prove Theorem \ref{thm:corr}.

\subsection{Proof of Theorem \ref{thm:inverselim}}  \label{sec:inverselimproof}

Let us begin by noting that while the first equation \eqref{eq:inverselim} of Theorem \ref{thm:inverselim} is claimed for all  $s \in (-1,1)$, we may assume without loss of generality that $s \in [0,1)$. To see this, consider that if $\tilde{s} := s+\mathbf{1}_{s < 0}$, then $\lfloor \tilde{s} m \rfloor = \lfloor sm \rfloor + m \mathbf{1}_{s < 0}$. It now remains to note that for any $0 \leq j \leq m-1$ we have
\begin{align*}
 e^{  - 2 \pi i \frac{ j + \theta_1/2}{m} \lfloor sm \rfloor }  = (-1)^{\theta_1 \mathbf{1}_{s < 0}}  e^{  - 2 \pi i \frac{ j + \theta_1/2}{m} \lfloor \tilde{s} m \rfloor } .
\end{align*}
We thus assume for the remainder of the proof of Theorem \ref{thm:inverselim} that $s \geq 0$. (We will treat the case of equation \eqref{eq:inverselim0} separately at the end.)

We take a moment to overview the proof of Theorem \ref{thm:inverselim}. 
Let us note that since $m$ is even we may reindex the roots of $(-1)^{\theta_1}$ as follows:
\begin{align*}
\left\{ e^{ 2 \pi i \frac{ j + \theta_1/2}{m}} : 0 \leq j \leq m \right\} = \left\{ -  e^{ - 2 \pi i \frac{ j + \theta_1/2}{m} } : -m/2+1\leq j \leq m/2 \right\}.
\end{align*} 
Using this reindexing to study the quantity in Theorem \ref{thm:inverselim}, we have  
\begin{align} \label{eq:lion}
\frac{1}{m} \sum_{ j = 0}^{m-1} \frac{ e^{ - 2 \pi i \frac{ j + \theta_1/2}{m} \lfloor sm \rfloor } }{ 1  +  \exp \left\{ 2 \pi i  \frac{ j + \theta_1 /2}{ m } \right\} + \frac{z}{m}  } = (-1)^{\lfloor sm \rfloor} \frac{1}{m} \sum_{ j = - m/2+1}^{ m/2}  \frac{ e^{ 2 \pi i \frac{ j + \theta_1/2}{m} \lfloor sm \rfloor } }{1 -  \exp \left\{- 2 \pi i  \frac{ j + \theta_1/2}{m} \right\} +  \frac{ z}{m} }.
\end{align}
Consider that for large $m$ and $j$ small compared to $m$, the quantity occuring in the denominator of the summands takes the form
\begin{align*}
m \left( 1 -  \exp \left\{ - 2 \pi i  \frac{ j + \theta_1/2}{m} \right\} +  \frac{ z}{m} \right) =  2 \pi i (j + \theta_1/2) + z + o(1/m). 
\end{align*}
One then might expect, naively taking $m$ to infinity with the quantity on the right-hand-side  \eqref{eq:lion}, that we have 
\begin{align} \label{eq:puresum}
\lim_{m \to \infty} \frac{1}{m} \sum_{ j = - m/2 + 1}^{ m/2 }  \frac{ e^{   2 \pi i \frac{ j + \theta_1/2}{m} \lfloor sm \rfloor } }{1 -  \exp \left\{- 2 \pi i  \frac{ j + \theta_1/2}{m} \right\} +  \frac{ z }{m} } =^{?} e^{ \pi \theta_1 s} \sum_{ j \in \mathbb{Z}} \frac{ e^{ 2 \pi i j s }}{ 2 \pi i(j + \theta_1/2)  + z} ,
\end{align}
insofar as it is possible to make sense of the complex sum on the right-hand-side of \eqref{eq:puresum}, which is clearly not uniformly convergent. It turns that \eqref{eq:puresum} is only true when $s \in (0,1)$; in fact, when $s =0$ an extra term emerges due to a contour integral. Thus our task in the section is twofold, in main we have the technical task of establishing the validity of the \eqref{eq:puresum}, and thereafter we use Fourier analysis to evaluate the sums of the form occuring on the right.

More specifically, the remainder of this section is structured as follows:
\begin{itemize}
\item In Lemma \ref{lem:tech1} we show that while the expression
\begin{align*}
F(z,s) := \sum_{ j \in \mathbb{Z}} \frac{e^{2 \pi i j s }}{ 2 \pi i j + z } \qquad s \in [0,1), z \in \mathbb{C} - 2 \pi i \mathbb{Z}, 
\end{align*}
does not represent a uniformly convergent sum, that the partial sums over $|j| \leq n$ are Cauchy, and hence the sum over $j \in \mathbb{Z}$ makes sense as a limit of these partial sums.
\item In Lemma \ref{lem:fourier}, we use the Poisson summation formula to compute $F(z,s) = e^{ - zs}(1-e^{-z})^{-1}$ for $s \in (0,1)$, and establish a related expression in the case $s = 0$. 
\item In Lemma \ref{lem:techs} we study show that an equation of the form \eqref{eq:puresum} holds when $s > 0$, and use the previous Lemma \ref{lem:fourier} to evaluate the expression on the right-hand-side of \eqref{eq:puresum} in this case. 
\item In Lemma \ref{lem:tech0} we see that the logic of \eqref{eq:puresum} fails when $s = 0$, since there is an asymptotic contribution from large $j$. Here we evaluate the sum component (i.e. contribution from small $j$) using Lemma \ref{lem:fourier}, and show that the asymptotic contribution from large $j$ may be computed by way of a contour integral. 
\item Finally, Lemmas \ref{lem:techs} and \ref{lem:tech0} are quickly tied together to establish Theorem \ref{thm:inverselim}.
\end{itemize}

We begin with Lemma \ref{lem:tech1}, which states that sums of the form $\sum_{ k \in \mathbb{Z}} \frac{e^{ 2 \pi i k s}}{ 2 \pi i k + z}$, while clearly not absolutely convergent, do indeed make sense.

\begin{lemma} \label{lem:tech1}
Let $s \in [0,1)$ and let $z \in \mathbb{C}$ such that the $z +2 \pi i j $ is nonzero for every integer $j$. Then there is a constant $C(s) > 0$ independent of $n_1, n_2$ and $z$ such that 
\begin{align} \label{eq:boundc}
\left| \sum_{ n_1 < |j| \leq n_2 } \frac{ e^{ 2 \pi i j s } }{ 2 \pi i j + z }  \right| \leq \frac{C(s)}{ n_1 - |z| }
\end{align}
In particular, for all $s \in [0,1)$ the sequence $F_{n}(z,s) := \sum_{ |j| \leq n } \frac{ e^{ 2 \pi i j s } }{ 2 \pi i j + z }$ is Cauchy, and hence the limit
\begin{align*}
F(z,s) :=\lim_{n \to \infty} F_n(z,s) = \sum_{ j \in \mathbb{Z}} \frac{ e^{ 2 \pi i j s } }{ 2 \pi i j + z } 
\end{align*}
exists. 
\end{lemma}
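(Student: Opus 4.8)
The plan is to prove the bound \eqref{eq:boundc} by summing the terms $\frac{e^{2\pi i j s}}{2\pi i j + z}$ over dyadic-type blocks and using Abel summation (summation by parts) to exploit cancellation coming from the oscillating factor $e^{2\pi i j s}$. The key point is that for $s \in [0,1)$ fixed, the partial sums $\sum_{j=a}^{b} e^{2\pi i j s}$ of a geometric series are uniformly bounded by a constant $C(s)$ depending only on $s$ (namely $C(s) = \frac{1}{|1 - e^{2\pi i s}|}$ if $s \neq 0$; the case $s=0$ must be handled separately since then the factor is trivial and the bound comes instead from telescoping via the near-cancellation of the $\pm j$ terms). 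First I would pair up the terms with indices $j$ and $-j$: writing $\frac{1}{2\pi i j + z} + \frac{1}{-2\pi i j + z} = \frac{2z}{z^2 + 4\pi^2 j^2}$, the combined coefficient decays like $O(1/j^2)$, which already makes the sum over paired indices absolutely convergent with tail bounded by $O(1/n_1)$ — but only once we have accounted for the phase. So more carefully, I would split $e^{2\pi i j s}$ and $e^{-2\pi i j s}$ and treat the even/odd parts, reducing to bounding $\sum_{n_1 < j \le n_2} \frac{e^{2\pi i j s}}{2\pi i j + z}$ with $j$ ranging over positive integers only.

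For this one-sided sum, I would apply Abel summation with $u_j := e^{2\pi i j s}$ (partial sums $U_j := \sum_{n_1 < k \le j} u_k$, bounded by $C(s)$) and $v_j := \frac{1}{2\pi i j + z}$. Summation by parts gives $\sum_{n_1 < j \le n_2} u_j v_j = U_{n_2} v_{n_2} - \sum_{n_1 < j \le n_2 - 1} U_j (v_{j+1} - v_j)$. The boundary term is bounded by $\frac{C(s)}{|2\pi i n_2 + z|} \le \frac{C(s)}{2\pi n_2 - |z|}$, and since $|v_{j+1} - v_j| = \frac{2\pi}{|2\pi i j + z||2\pi i (j+1) + z|} = O(1/j^2)$ uniformly once $j > |z|/\pi$ say, the remaining sum is bounded by $C(s) \sum_{j > n_1} O(1/j^2) = O(C(s)/n_1)$. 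Collecting these and absorbing the $|z|$ dependence into the denominator as stated yields $\left| \sum_{n_1 < |j| \le n_2} \frac{e^{2\pi i j s}}{2\pi i j + z} \right| \le \frac{C(s)}{n_1 - |z|}$, after possibly enlarging $C(s)$ by an absolute constant. The Cauchy property of $F_n(z,s)$ is then immediate: for $n_2 > n_1$ large, $|F_{n_2}(z,s) - F_{n_1}(z,s)|$ is exactly a sum of the above form, which tends to $0$; hence $F(z,s) = \lim_n F_n(z,s)$ exists.

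The main obstacle I anticipate is handling the case $s = 0$ cleanly within the same framework, since then $u_j \equiv 1$ has unbounded partial sums and the Abel-summation argument as phrased collapses. For $s = 0$ one must instead rely entirely on the $\pm j$ pairing: $F_n(z,0) = \frac{1}{z} + \sum_{1 \le j \le n} \frac{2z}{z^2 + 4\pi^2 j^2}$, which is a genuinely absolutely convergent series with tail $O(1/n)$, giving the bound directly. A mildly annoying bookkeeping point is that the pairing $j \leftrightarrow -j$ only works term-by-term if the index set $\{j : n_1 < |j| \le n_2\}$ is symmetric, which it is, so this is fine — but one should be careful to state the constant $C(s)$ is uniform over $z$ bounded away from $2\pi i \mathbb{Z}$ in the sense that the $\frac{1}{n_1 - |z|}$ already carries all the $z$-dependence, and the bound is only claimed for $n_1 > |z|$. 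None of this is deep; the proof is a routine but slightly fiddly exercise in summation by parts plus the even/odd decomposition.
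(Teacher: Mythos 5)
Your proposal is correct and follows essentially the same route as the paper: the paper also treats $s=0$ by pairing $j$ with $-j$, and for $s\in(0,1)$ uses the telescoping representation $a_j=\sum_{\ell\geq j}(a_\ell-a_{\ell+1})$ and exchanges the order of summation, which is precisely your Abel summation with $u_j=e^{2\pi i js}$ and $v_j=(2\pi i j+z)^{-1}$. The only cosmetic difference is that you spell out the summation-by-parts identity explicitly and first reduce to one-sided sums, whereas the paper writes the same manipulation as a double sum.
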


\begin{proof}
The case $s = 0$ is straightforward: we may couple the $j$ and $-j$ terms, and use the identity
\begin{align*}
\frac{1}{2 \pi i j  + z } + \frac{1}{-2\pi i j + z} = \frac{ 2z}{ (2 \pi j)^2 + z^2 }
\end{align*}
in order to establish that \eqref{eq:boundc} holds in the case $s = 0$ for some $C(0)$.

We turn to the case where $s \in (0,1)$. Here, let $b = b(s) := e^{  2 \pi i s}$ and $a_j = (2 \pi ji  + z )^{-1}$. A brief calculation tells us that there is an absolute constant $C$ such that setting $c_j := a_j - a_{j+1}$ we have
\begin{align} \label{eq:gorilla}
|c_j| < \frac{ C}{  (|j| - |z|)^2 }.
\end{align}
Note that the sum $a_j = \sum_{ \ell \geq j} c_\ell$ \emph{is} uniformly convergent. Now we can write 
\begin{align} \label{eq:pear}
\sum_{ n_1 < j \leq n_2 } a_j b^j  = \sum_{ n_1 < j \leq n_2 } b^j \sum_{ \ell \geq j} c_j = \sum_{ n_1 < \ell \leq n_2 } c_\ell \sum_{ n_1 < j \leq \ell }b^j.
\end{align}
Now note that since $s \in (0,1)$, $b(s) \neq 0$. Since there is a constant $C$ such that $\left| \sum_{ j = 0}^{m-1} b^j \right| \leq 2/|1-b(s)| \leq C(s)$ for every $m > 0$, it follows from \eqref{eq:pear} and \eqref{eq:gorilla} we have
\begin{align*}
\left| \sum_{ n_1 < j \leq n_2 } a_j b^j   \right| \leq C(s) \sum_{ \ell \geq n_1 } | c_\ell | \leq \frac{ C_1(s) }{ |j| - |z| } ,
\end{align*} 
for a second sufficiently large constant $C_1(s)$, completing the proof.
\end{proof}

Our next result states that $F(z,s)$ in Lemma \ref{lem:tech1} may be computed explicitly. 

\begin{lemma} \label{lem:fourier}
Let $s \in (0,1)$ and let $z \in \mathbb{C}$. Then 
\begin{align*}
F(z,s) := \sum_{ j \in \mathbb{Z} } \frac{ e^{ 2 \pi i j s }}{ z + 2 \pi i j } =  \frac{ e^{ - zs}}{ 1 - e^{ - z}}  \qquad \text{and} \qquad \sum_{ j \in \mathbb{Z} +1/2 } \frac{ e^{ 2 \pi i j s }}{ z + 2 \pi i j } =  \frac{ e^{ - zs}}{ 1 + e^{ - z}} 
\end{align*}
As for the $s = 0$ case, we have the identity
\begin{align*}
F(z,0) := \sum_{ j \in \mathbb{Z}} \frac{ 1}{ z + 2 \pi i j } = \frac{1}{2} \frac{ 1 + e^{-z}}{ 1 - e^{ -z}} .
\end{align*}
\end{lemma}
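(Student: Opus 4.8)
The plan is to recognize $\frac{1-e^{-z}}{z+2\pi i j}$ as the $j$-th Fourier coefficient of the function $s \mapsto e^{-zs}$ on $[0,1)$, and then to read off the three claimed identities from the classical pointwise convergence of Fourier series (equivalently, from Poisson summation for periodic functions). Concretely, fix $z \notin 2\pi i \mathbb{Z}$ and let $g:\mathbb{R}\to\mathbb{C}$ be the $1$-periodic extension of $s\mapsto e^{-zs}$, $s\in[0,1)$. A one-line integration gives
\begin{align*}
\hat g(j) := \int_0^1 e^{-zs} e^{-2\pi i j s}\, \mathrm{d}s = \frac{1 - e^{-(z+2\pi i j)}}{z + 2\pi i j} = \frac{1 - e^{-z}}{z + 2\pi i j}, \qquad j \in \mathbb{Z},
\end{align*}
so the symmetric Fourier partial sum $S_n g(s) := \sum_{|j|\le n}\hat g(j)e^{2\pi i j s}$ equals exactly $(1-e^{-z})\,F_n(z,s)$, with $F_n$ as in Lemma \ref{lem:tech1}.

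Next I would invoke Dirichlet's theorem on pointwise convergence: since $g$ is of bounded variation on a period (it is smooth on $(0,1)$ with a single jump at the integers), the symmetric partial sums $S_n g(s)$ converge at every point to the average of the one-sided limits of $g$. For $s\in(0,1)$ this average is the value $g(s)=e^{-zs}$, and at $s=0$ it is $\tfrac12\big(g(0^+)+g(1^-)\big)=\tfrac12(1+e^{-z})$. Dividing by the nonzero constant $1-e^{-z}$, and using that by Lemma \ref{lem:tech1} the limit of the symmetric partial sums $F_n(z,s)$ is precisely $F(z,s)$, we obtain
\begin{align*}
F(z,s) = \frac{e^{-zs}}{1 - e^{-z}}\quad (s\in(0,1)), \qquad F(z,0) = \frac{1}{2}\,\frac{1+e^{-z}}{1-e^{-z}}.
\end{align*}

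Finally, for the half-integer sum I would substitute $j=k+\tfrac12$ with $k\in\mathbb{Z}$: then $e^{2\pi i j s}=e^{\pi i s}e^{2\pi i k s}$ and $z+2\pi i j=(z+\pi i)+2\pi i k$, so (for $z\notin \pi i+2\pi i\mathbb{Z}$, the implicit nonvanishing hypothesis) the sum equals $e^{\pi i s}F(z+\pi i,s)$. Applying the formula just proved with $z$ replaced by $z+\pi i$ and simplifying $e^{\pi i s}e^{-(z+\pi i)s}=e^{-zs}$ and $1-e^{-(z+\pi i)}=1+e^{-z}$ yields $\frac{e^{-zs}}{1+e^{-z}}$, as claimed. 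The only point that deserves care — more a bookkeeping matter than a genuine obstacle — is to keep the mode of summation consistent: $F(z,s)$ denotes the limit of the symmetric partial sums $F_n(z,s)=\sum_{|j|\le n}$, which is exactly the object handled by Lemma \ref{lem:tech1} and by Dirichlet's theorem, and must not be conflated with an absolutely convergent series. Everything else is the routine integration above and the standard pointwise-convergence statement for Fourier series.
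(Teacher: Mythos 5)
Your proof is correct, and it takes a genuinely different route from the paper's. The paper applies the Poisson summation formula to the Cauchy-type kernel $f_{s,a,b}(x)=e^{isx}/(a^2+(x+b)^2)$, sums a geometric series on the Fourier side, and then recovers the complex sum $\sum_j e^{2\pi i j s}/(z+2\pi i j)$ by splitting into real and imaginary parts (with a differentiation-in-$s$ step and a conjugation bookkeeping identity), before running a separate argument for $s=0$ via the cotangent series $\sum_{j\in\mathbb{Z}}(z+2\pi ij)^{-1}=\tfrac12\coth(z/2)$. You instead recognize $(1-e^{-z})/(z+2\pi i j)$ directly as the $j$-th Fourier coefficient of the $1$-periodic extension of $s\mapsto e^{-zs}$ on $[0,1)$, so that $(1-e^{-z})F_n(z,s)$ is literally the $n$-th symmetric Fourier partial sum $S_ng(s)$, and then invoke Dirichlet's pointwise-convergence theorem for functions of bounded variation. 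This handles $s\in(0,1)$ and $s=0$ in a single stroke, since at the jump point $s=0$ Dirichlet's theorem automatically produces the average $\tfrac12(g(0^+)+g(1^-))=\tfrac12(1+e^{-z})$, which is precisely the paper's extra half-integer-like correction. Your reduction of the half-integer sum to $e^{\pi i s}F(z+\pi i,s)$ by the shift $j=k+\tfrac12$ is also clean and replaces what the paper leaves to an unstated analogous computation. The only thing worth making explicit (and you do flag it) is that throughout, ``$\sum_{j\in\mathbb{Z}}$'' must be read as the limit of symmetric partial sums $F_n$, which is exactly the mode of convergence both Lemma \ref{lem:tech1} and Dirichlet's theorem deliver; and that the implicit hypothesis $z\notin 2\pi i\mathbb{Z}$ (resp.\ $z\notin\pi i+2\pi i\mathbb{Z}$ for the half-integer sum) is needed so that no term of the series blows up and $1-e^{-z}\neq 0$. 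Overall your argument is shorter, avoids the real/imaginary-part gymnastics, and makes the $s=0$ anomaly conceptually transparent as the value of a Fourier series at a jump discontinuity.
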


\begin{proof}
Let us introduce the Poisson summation formula, following Katznelson \cite[Chapter VI]{katznelson}. Let $f \in \mathcal{L}^1(\mathbb{R})$ be an integrable complex-valued function on the real line, and define the $\varphi:[0,2 \pi) \to \mathbb{C}$ by 
\begin{align*}
\varphi(t) := 2 \pi \sum_{ j \in \mathbb{Z}} f(t + 2 \pi j ).
\end{align*}
It is easily verified using the integrability of $f$ that $\varphi$ is integrable on $[0,2\pi)$. Furthermore, we associate with $\varphi$ and/or $f$ the Fourier coefficients
\begin{align*}
\hat{f}(n) := \int_{-\infty}^\infty f(x) e^{ -  i n x} \mathrm{d} x = \frac{1}{2 \pi } \int_0^{2 \pi} \varphi(t) e^{ - i n t } \mathrm{d} t. 
\end{align*}
Then provided $t = 0$ is a point of continuity for $\varphi(t)$, the \emph{Poisson summation formula} states that
\begin{align*}
\varphi(0) := 2 \pi\sum_{j \in \mathbb{Z}} f( 2 \pi j) = \sum_{ j \in \mathbb{Z}} \hat{f}(j).
\end{align*}
We now apply the Poisson summation formula to our problem. 
Consider now defining for $s \in [0,1], a >0, b \in \mathbb{R}$ the integrable function
\begin{align*}
f_{s,a,b}( x) := \frac{ e^{i sx  } }{ a^2 + (x + b)^2 }.
\end{align*}
Using the Fourier coefficients of the Cauchy distribution, a calculation tells us that the Fourier coefficients associated with $f_{s,a,b}$ are given by 
\begin{align*}
\hat{f}_{s,a,b}(j) = \int_{-\infty}^\infty \frac{ e^{2 \pi i (s-j)x } }{ a^2 + (x + b)^2 } \mathrm{d} x = \frac{ \pi e^{ - i (s-j)b} }{ a} e^{ - a |j-s| }.
\end{align*} 
In particular, provided $\varphi_{s,a,b}(t) := 2 \pi \sum_{ j \in \mathbb{Z}} f_{s,a,b}(t + 2 \pi j)$ is continuous at zero, using the Poisson summation formula to obtain the first inequality, and summing the geometric series to obtain the second, for $a > 0, b \in \mathbb{R}$ and $s \in (0,1)$ we have 
\begin{align} \label{eq:strauss}
\frac{1}{2\pi} \varphi_{s,a,b}(0) := \sum_{ j \in \mathbb{Z}} \frac{ e^{2 \pi j  i s   } }{ a^2 + (2 \pi j + b)^2 } = \frac{1}{2\pi } \sum_{ j \in \mathbb{Z}} \frac{ \pi e^{ - i (s-j)b} }{ a} e^{ - a |j-s| } = \frac{1}{2a} \left( \frac{ e^{ - s w}}{ 1 - e^{ - w}} + \frac{ e^{ - (1-s)\bar{w} } }{ 1 - e^{ - \bar{w}} } \right),
\end{align}
where $w := a + ib $. Noting that 
\begin{align*}
\varphi_{s,a,b}(t) = e^{ i st } \varphi_{s,a,t+b}(0),
\end{align*}
it is easily verified that for $s > 0$, the candidate formula for $\varphi_{s,a,b}(t)$ is continuous at $t = 0$, and hence the application of the Poisson summation abvoe was valid. 

\vspace{5mm}
We now turn to the sum in question. Setting $z = a + i b $, by reifying the denominator we have
\begin{align} \label{eq:prokofiev}
\sum_{ j \in \mathbb{Z} } \frac{ e^{ 2 \pi i j s }}{ z + 2 \pi i j }  &= \bar{z} \sum_{ j \in \mathbb{Z}}  \frac{ e^{ 2 \pi i j s }}{ a^2 + (2 \pi j + b)^2  }  -  \sum_{ j \in \mathbb{Z}}  \frac{ 2 \pi j i e^{ 2 \pi i j s}}{ a^2 + (2 \pi j + b)^2  }  \nonumber \\
&= \frac{1}{2\pi}  \bar{z} \varphi_{s,|a|,b}(0) -  \frac{1}{2\pi}  \frac{\partial}{ \partial s} \varphi_{s,|a|,b}(0),
\end{align}
where $\varphi_{s,|a|,b}(0)$ is as in \eqref{eq:strauss}, and the interchange in the order of differentiation and summation is justified by the fact that the series 
$\sum_{ j \in \mathbb{Z}} \frac{ j e^{ 2 \pi i j s }}{ x^2 + (2 \pi j + y)^2}$ converges; this last point may be proved in much the same method as that used in Lemma \ref{lem:tech}. 

Note that setting $z = a + ib $ and $w = |a| + i b$, by a brief calculation we have 
\begin{align} \label{eq:jarvi}
\left( \frac{ e^{ - s w}}{ 1 - e^{ - w}} + \frac{ e^{ - (1-s)\bar{w} } }{ 1 - e^{ - \bar{w}} } \right) =\left( \frac{ e^{ - s z}}{ 1 - e^{ - z}} + \frac{ e^{ - (1-s)\bar{z} } }{ 1 - e^{ - \bar{z}} } \right),
\end{align}
with the understanding that both sides are zero when $a = 0$. 

In particular, a calculation using \eqref{eq:prokofiev}, \eqref{eq:strauss} and \eqref{eq:jarvi} tells us that for $s \in (0,1)$ and $a,b \in \mathbb{R}$ we have


We can compute $F_{x + iy}(s)$ by the Poisson summation formula. Namely, for $f \in L^1(\mathbb{R})$ we have \\$\sum_{ j \in \mathbb{Z}} e^{ 2 \pi i j s } f( 2 \pi j ) = \frac{1}{2 \pi} \sum_{ k \in \mathbb{Z}} G( s+ k ) $ where $G(t) := \int_{ -\infty}^\infty f(y) e^{ i y t} \mathrm{d} y$. In particular, in our setting we have $f(x) = \frac{1}{ a^2 + (b + x)^2}$ and $G(t) := \frac{ \pi e^{ -  i b t}}{a} e^{ - | a w t | }$. It follows 
\begin{align*}
F_{a,b}(s) &= \frac{1}{2\pi} \sum_{ k \in \mathbb{Z}} \frac{ \pi e^{ - i b ( s + k ) } }{ a} e^{ - a | s + k | }
\end{align*}
which following a computation reduces to
\begin{align*}
F_{a,b}(s) &= \frac{ 1 }{ 2 \mathrm{Re}(z) } \left( \frac{ e^{ - s z}}{ 1 - e^{ - z}} + \frac{ e^{ - (1-s)\bar{z} } }{ 1 - e^{ - \bar{z}} } \right).
\end{align*}
A computation gives the result.

As for the $s =0$ case, we have 
\begin{align*}
\sum_{j \in \mathbb{Z}} \frac{1}{z+ 2 \pi i j } &= \frac{1}{z} + \sum_{ j \geq 1} \left( \frac{1}{z+ 2 \pi i j } + \frac{1}{z-2\pi i j } \right)\\
&= \frac{1}{z} + \sum_{ j \geq 1} \frac{ (z/2) }{ (z/2)^2 + \pi^2 j^2 }\\
&= \frac{1}{2} \mathrm{coth}( z/2) = \frac{1}{2} \frac{ 1 + e^{ - z}}{ 1 - e^{ - z}},
\end{align*}
where the penultimate equality above follows from the well known series representation 
for the cotangent function. (For information on this representation the reader is directed to the interesting discussion on the \emph{Herglotz trick} in \cite[Chapter 25]{AZ}.)

\end{proof}

Let us make a brief interprative remark on Lemma \ref{lem:fourier}. While the complex-valued function given by $F(z,s) := \sum_{ j \in \mathbb{Z} } \frac{ e^{ 2 \pi i j s}}{ z + 2 \pi i j } $ on the torus is not continuous at $s = 0$, we have \[F(z,0) = \frac{1}{2} \lim_{ \varepsilon \to 0} ( F(z, \varepsilon) + F(z, 1-\varepsilon) ).\]

We now turn to our next technical lemma, which, by setting $z = T(p+w)$, states when $s \in (0,1)$, the equality in \eqref{eq:puresum} does indeed hold. (We will see in a moment that this is \emph{not} the case when $s = 0$.)

\begin{lemma}  \label{lem:techs}
Let $s \in (0,1)$. Then with $F(z,s)$ as in Lemma \ref{lem:tech1} we have
 \begin{align} \label{eq:puresum2}
\lim_{m \to \infty} \frac{1}{m} \sum_{ j = -m/2+1}^{m/2}   \frac{ e^{  2 \pi i \frac{ j + \theta_1/2}{m} \lfloor sm \rfloor } }{1 -  \exp \left\{ - 2 \pi i  \frac{ j + \theta_1/2}{m} \right\} +  \frac{z}{m} } = \frac{ e^{ - z s } }{ 1 + (-1)^{\theta_1 - 1} e^{-z} }.
\end{align}

\end{lemma}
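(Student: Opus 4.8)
# Proof proposal for Lemma \ref{lem:techs}

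The plan is to split the sum over $|j| \le m/2$ into a ``central'' range $|j| \le R_m$ for a slowly growing cutoff (say $R_m = m^{1/4}$) and a ``tail'' range $R_m < |j| \le m/2$, and to show that the central part converges to $F(z,s)$ while the tail part is asymptotically negligible. The contrast with the $s=0$ case (treated separately in Lemma \ref{lem:tech0}) is precisely that for $s>0$ the tail vanishes, whereas for $s=0$ it contributes an extra contour-integral term.

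\textbf{Step 1: the central range.} For $|j| \le R_m$ one has the expansion
\begin{align*}
m\left(1 - \exp\left\{- 2\pi i \frac{j+\theta_1/2}{m}\right\} + \frac{z}{m}\right) = 2\pi i (j + \theta_1/2) + z + O\!\left(\frac{R_m^2}{m}\right),
\end{align*}
uniformly in $|j|\le R_m$, and similarly $e^{2\pi i \frac{j+\theta_1/2}{m} \lfloor sm\rfloor} = e^{2\pi i j s} e^{\pi i \theta_1 s}(1 + O(R_m/m^{1/2}))$ using $\lfloor sm\rfloor = sm + O(1)$ (the error in the exponent is $O(R_m \cdot (1 + R_m/m))/$\ldots, which one checks is $o(1)$ for $R_m = m^{1/4}$). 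Hence each central summand, multiplied by $1/m$, is
\begin{align*}
\frac{1}{m}\cdot \frac{e^{2\pi i \frac{j+\theta_1/2}{m}\lfloor sm\rfloor}}{1 - \exp\{-2\pi i (j+\theta_1/2)/m\} + z/m} = \frac{e^{\pi i \theta_1 s}\, e^{2\pi i j s}}{2\pi i(j+\theta_1/2) + z} + (\text{small}),
\end{align*}
and summing over $|j| \le R_m$ and invoking the Cauchy property from Lemma \ref{lem:tech1} (applied with $z$ replaced by $z + \pi i \theta_1$, using $2\pi i(j+\theta_1/2) + z = 2\pi i j + (z+\pi i\theta_1)$), the central contribution converges as $m\to\infty$ to
\begin{align*}
e^{\pi i \theta_1 s} \sum_{j \in \mathbb{Z}} \frac{e^{2\pi i j s}}{2\pi i j + (z + \pi i \theta_1)} = e^{\pi i \theta_1 s}\, F(z + \pi i \theta_1, s) = e^{\pi i \theta_1 s}\, \frac{e^{-(z+\pi i \theta_1)s}}{1 - e^{-(z+\pi i\theta_1)}} = \frac{e^{-zs}}{1 - (-1)^{\theta_1}e^{-z}},
\end{align*}
where the middle equality is Lemma \ref{lem:fourier} (the plain $s\in(0,1)$ case for $\theta_1=0$, the half-integer shift case for $\theta_1=1$, after absorbing $e^{\pi i \theta_1 s}$), and the last equality is $e^{-(z+\pi i\theta_1)} = (-1)^{\theta_1}e^{-z}$ together with cancellation of $e^{\pi i\theta_1 s}$.

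\textbf{Step 2: the tail range, and where the difficulty lies.} For $R_m < |j| \le m/2$ one needs $\left|\sum_{R_m < |j| \le m/2} \frac{1}{m}\cdot \frac{e^{2\pi i \frac{j+\theta_1/2}{m}\lfloor sm\rfloor}}{1 - \exp\{-2\pi i(j+\theta_1/2)/m\} + z/m}\right| \to 0$. There is a lower bound $|1 - \exp\{-2\pi i(j+\theta_1/2)/m\} + z/m| \ge c|j|/m$ for $|j|$ not too close to $m/2$ (and a comparable bound near $j = m/2$ as well, since there $\exp\{-2\pi i(j+\theta_1/2)/m\}$ is near $-1$), so the summand magnitude is $O(1/|j|)$. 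A crude bound $\sum 1/|j|$ diverges logarithmically, so this is \emph{not} enough — and this is the main obstacle. The resolution, exactly as in Lemma \ref{lem:tech1}, is Abel summation in the variable $j$: write $a_j = (1 - \exp\{-2\pi i(j+\theta_1/2)/m\} + z/m)^{-1}$, $b = e^{2\pi i \lfloor sm\rfloor/m}$ (so $|b| = 1$ and, crucially, $b$ is bounded away from $1$ since $\lfloor sm\rfloor/m \to s \in (0,1)$), and exploit that $\sum_{n_1 < j \le \ell} b^{j}$ is bounded uniformly in $\ell$ by $2/|1-b| = O_s(1)$, while $\sum_{j} |a_j - a_{j+1}|$ over the tail is $O(R_m^{-1}\cdot m^{?})$\ldots here one must be careful: the increments $|a_j - a_{j+1}|$ are of size $O(1/(m \cdot (|j|/m)^2)) = O(m/j^2)$, so $\sum_{|j| > R_m} |a_j - a_{j+1}| = O(m/R_m)$, and after the extra $1/m$ factor the tail is $O(1/R_m) = o(1)$. (One also checks the boundary terms at $|j| = R_m$ and $|j| = m/2$ contribute $O(1/R_m)$ and $O(1/m)$ respectively.) The fact that $b$ stays away from $1$ — which fails precisely at $s=0$ — is what makes the tail die; this is the crux of why the present lemma and Lemma \ref{lem:tech0} diverge.

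\textbf{Step 3: conclude.} Combining Steps 1 and 2, the full sum converges to $\dfrac{e^{-zs}}{1 - (-1)^{\theta_1}e^{-z}} = \dfrac{e^{-zs}}{1 + (-1)^{\theta_1 - 1}e^{-z}}$, which is \eqref{eq:puresum2}. I would present Steps 1 and 2 as the substance, with the expansion estimates in Step 1 and the Abel-summation bound in Step 2 stated with the uniform $O$-constants made explicit but the arithmetic left to the reader, mirroring the style of Lemma \ref{lem:tech}.
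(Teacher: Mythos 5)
Your proposal is correct and takes essentially the same route as the paper: split at a cutoff $m_0 = m^{1/4}$, bound the tail by Abel summation in $j$ exploiting that $b_m = e^{2\pi i \lfloor sm\rfloor/m}$ stays away from $1$, Taylor-expand the central summands to get $O(m_0^3/m)$ error, then pass to the full sum via Lemma \ref{lem:tech1} and evaluate with Lemma \ref{lem:fourier}. (Incidentally, the final display in the paper's proof has a sign typo, reading $1 + (-1)^{\theta_1}e^{-z}$ in the denominator where it should read $1 - (-1)^{\theta_1}e^{-z} = 1 + (-1)^{\theta_1-1}e^{-z}$; your version gets this right.)
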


\begin{proof}
Let $s \in (0,1)$. Here, we may write  
\begin{align} \label{eq:repup}
 \frac{1}{m} \sum_{ j = -m/2+1}^{m/2} \frac{ e^{  2 \pi i \frac{ j + \theta_1/2}{m} \lfloor sm \rfloor } }{1 -  \exp \left\{ - 2 \pi i  \frac{ j + \theta_1/2}{m} \right\} +  \frac{z}{m} } = e^{ \pi i \theta_1 \lfloor sm \rfloor/m } \sum_{ j = - m/2 +1 }^{m/2} a_{j,m} b_m^j
\end{align}
where $a_{j,m} = a_{j,m}(z,\theta_1)$ and $b_m = b_m(s)$ are given by 
\begin{align*}
a_{j,m} = \frac{1}{ m\left( 1 - \exp( - 2 \pi i \frac{j+\theta_1/2}{m} ) + z/m \right) } \qquad \text{and} \qquad b_m = e^{ 2 \pi i \lfloor sm \rfloor/m}.
\end{align*}
For $j < m/2$, set $c_{j,m} := a_{j,m} - a_{j+1,m}$. It is easily verified that there is a constant $C = C(z)$ such that for all $m$ and all $-\frac{m}{2}+1\leq j \neq 0 \leq \frac{m}{2}$ we have
\begin{align} \label{eq:nicebounds}
|a_{j,m}| \leq C/j \qquad \text{and} \qquad |c_{j,m} | \leq C/j^2.
\end{align}
Now using the definition of $c_{j,m}$, we have 
\begin{align}
a_{j,m} := a_{m/2,m} + \sum_{ \ell = j}^{m/2-1} c_{j,m}.
\end{align}
It follows that for any integer $m_0 \geq 1$ we have 
\begin{align} \label{eq:christ}
\sum_{ j = m_0}^{m/2} a_{j,m} b^j &= \sum_{ j = m_0}^{m/2} b^j \left( a_{m/2,m} + \sum_{ \ell = j}^{m/2-1} c_{\ell,m} \right) \nonumber \\
&= a_{m/2,m} \sum_{ j = m_0}^{m/2} b^j + \sum_{ \ell = m_0}^{m/2} c_{\ell,m}  \sum_{ j = m_0}^{\ell} b^j .
\end{align}
It is easily verified from the definition of $b_m$ that summing the geometric series that for any integers $m_1, m_2$, $\left| \sum_{ j = m_1}^{m_2} b_m^j \right| \leq \frac{2}{|1-b_m| } \leq C(s)$ for some constant $C(s)$ depending on $s$ but not $m$. Using this fact with \eqref{eq:christ} to obtain the first inequality below, and then \eqref{eq:nicebounds} to obtain the second, we have 
\begin{align} \label{eq:christ2}
\left| \sum_{ j = m_0}^{m/2} a_{j,m} b^j \right| \leq C(s) \left( |a_{m/2},m| + \left| \sum_{ \ell = m_0}^{m/2} c_{\ell,m}   \right| \right) \leq C_1 /m_0 
\end{align}
for some constant $C_1 = C_1 (s,z)$ not depending on $m$ or $m_0$. A similar argument extending consideration to negative $j$ tells us that the contribution to the sum in \eqref{eq:repup} for $|j| \geq m_0$ is $O(1/m_0)$. That is, there is a constant $C_2 = C_2(s,z)$ such that for all $m,m_0$ we have
\begin{align} \label{eq:banane}
\left| e^{ \pi i \theta_1 \lfloor sm \rfloor/m } \sum_{ -m/2 +1 \leq j \leq m/2, |j| \geq m_0 } a_{j,m} b_m^j \right| \leq C_2 /m_0.
\end{align}
Now write $d_{j,m} := e^{ \pi i \theta_1 \lfloor sm \rfloor/m } a_{j,m} b_m^j$ for the summands. Then it is easily verified that there is a constant $C_3 = C_3(s,z)$ such that 
\begin{align*}
\left| d_{j,m} - \frac{ e^{ 2 \pi i (j + \theta_1/2) s } }{ 2 \pi i (j + \theta_1/2) + z ) } \right| \leq C_3 j^2/m.
\end{align*}
It follows that for some $C_4 = C_4(s,z)$ independent of $m_0$ and $m$ we have 
\begin{align} \label{eq:oranje}
\left| \sum_{ |j| \leq m_0 } d_{j,m} - \sum_{ |j| \leq m_0 } \frac{ e^{ 2 \pi i (j + \theta_1/2) s } }{ 2 \pi i (j + \theta_1/2) + z ) } \right| \leq C_4 m_0^3/m.
\end{align}
Combining \eqref{eq:banane} and \eqref{eq:oranje}, with $O$ terms that depend on $s$ and $z$ but are independent of $m$ and $m_0$ we have
\begin{align*}
\sum_{ j = -m/2+1}^{m/2} d_{j,m} = \sum_{ |j| \leq m_0 }  \frac{ e^{ 2 \pi i (j + \theta_1/2) s } }{ 2 \pi i (j + \theta_1/2) + z ) }  + O(m_0^3/m) + O(1/m_0).
\end{align*}
In particular, setting $m_0 = m^{1/4}$ we have 
\begin{align} \label{eq:1984}
\sum_{ j = -m/2+1}^{m/2} d_{j,m} = \sum_{ |j| \leq m^{1/4} }  \frac{ e^{ 2 \pi i (j + \theta_1/2) s } }{ 2 \pi i (j + \theta_1/2) + z ) }  + O(m^{-1/4}).
\end{align}
Combining \eqref{eq:1984} with \eqref{eq:boundc} we have 
\begin{align} \label{eq:1985}
\sum_{ j = -m/2+1}^{m/2} d_{j,m} = \sum_{ j \in \mathbb{Z} }  \frac{ e^{ 2 \pi i (j + \theta_1/2) s } }{ 2 \pi i (j + \theta_1/2) + z ) }  + O(m^{-1/4}).
\end{align}
In other words, thanks to \eqref{eq:1985}, with $F(z,s)$ as in Lemma \ref{lem:tech1} we have
 \begin{align*} 
\lim_{m \to \infty} \frac{1}{m} \sum_{ j = -m/2+1}^{m/2}   \frac{ e^{  2 \pi i \frac{ j + \theta_1/2}{m} \lfloor sm \rfloor } }{1 -  \exp \left\{ - 2 \pi i  \frac{ j + \theta_1/2}{m} \right\} +  \frac{z}{m} } = e^{ \pi i \theta_1 s } F(z + \pi i \theta_1 , s)
\end{align*}
The result follows from noting that, due to Lemma \ref{lem:fourier} we have 
\begin{align*}
e^{ \pi i \theta_1 s } F(z + \pi i \theta_1 , s) = e^{ \pi \theta_1 i s } \frac{ e^{ - (z + \pi i \theta_1 ) s } }{ 1 - e^{ - z - \pi i \theta_1 } } = \frac{ e^{ - z s } }{ 1 + (-1)^{\theta_1 } e^{ - z} }.
\end{align*} 

\end{proof} 

The next lemma is the analogue of the previous lemma for $s = 0$. This computation turns out to be more involved, since the lack of periodic term $e^{ 2 \pi i j s }$ means that all terms $- m/2 +1 \leq j \leq m/2$ contribute to the asymptotics, and hence extra terms emerge as contour integrals.

\begin{lemma} \label{lem:tech0}
Let the real part of $z$ be non-zero. Then for $\delta \in \{0,1\}$ we have 
 \begin{align} \label{eq:puresum2}
 &\lim_{m \to \infty} \frac{1}{m} \sum_{ j = -m/2+1}^{m/2}   \frac{e^{-2\pi i \delta \frac{j+\theta_1/2}{m}}  }{1 -  \exp \left\{ - 2 \pi i  \frac{ j + \theta_1/2}{m} \right\} +  \frac{z}{m} } = \frac{ \left( (-1)e^{-z} \right)^{\delta} }{1 - (-1)^{\theta_1}e^{-z}}.
\end{align}

\end{lemma}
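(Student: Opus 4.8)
The plan is to avoid any split into small- and large-$j$ contributions and instead evaluate the sum exactly, as a symmetric function of the $m$-th roots of $(-1)^{\theta_1}$.

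First I would reindex. Since $m$ is even, $\{-m/2+1,\dots,m/2\}$ is a complete residue system modulo $m$, so as $j$ ranges over this set the numbers $v_j:=e^{-2\pi i(j+\theta_1/2)/m}$ range over the $m$ distinct $m$-th roots of $a:=(-1)^{\theta_1}$, while the numerator equals $e^{-2\pi i\delta(j+\theta_1/2)/m}=v_j^{\delta}$. Putting $c:=1+z/m$, so that $1-v_j+z/m=c-v_j$, the quantity in the lemma equals
\[
S_m:=\frac1m\sum_{v^m=a}\frac{v^{\delta}}{c-v}=-\frac1m\sum_{v^m=a}\frac{v^{\delta}}{v-c}.
\]
I would first record that $S_m$ is well defined for all large $m$: since $|c|^2=1+2\Re(z)/m+O(m^{-2})$ and $\Re(z)\neq 0$, one has $|c|\neq 1$ for large $m$, so $c$ is not an $m$-th root of $a$ and no denominator vanishes (only large $m$ is relevant for the limit).

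The key step is then a closed-form evaluation, using the factorisation $\prod_{v^m=a}(x-v)=x^m-a$ and its logarithmic derivative $\sum_{v^m=a}(x-v)^{-1}=mx^{m-1}/(x^m-a)$, valid whenever $x^m\neq a$. Taking $x=c$ gives $\sum_{v^m=a}(v-c)^{-1}=-mc^{m-1}/(c^m-a)$, so for $\delta=0$ we obtain $S_m=c^{m-1}/(c^m-a)$ immediately; for $\delta=1$, writing $v/(v-c)=1+c/(v-c)$ and summing yields $\sum_{v^m=a}v/(v-c)=m-mc^m/(c^m-a)=-ma/(c^m-a)$, hence $S_m=a/(c^m-a)$. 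Both cases are captured by $S_m=a^{\delta}c^{(1-\delta)(m-1)}/(c^m-a)$, and since $c=1+z/m$ forces $c^m\to e^{z}$ and $c^{m-1}=c^m/c\to e^{z}$,
\[
\lim_{m\to\infty,\ m\text{ even}}S_m=\frac{a^{\delta}e^{(1-\delta)z}}{e^{z}-a}=\frac{\bigl((-1)^{\theta_1}e^{-z}\bigr)^{\delta}}{1-(-1)^{\theta_1}e^{-z}},
\]
which is the asserted limit.

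I do not expect a genuine analytic obstacle on this route: the only care needed is the sign in $1-v+z/m=-(v-c)$, the $\delta=1$ algebra, and noticing that $m$ even is exactly what makes $\{-m/2+1,\dots,m/2\}$ a complete residue system. For completeness I would add a remark that one can instead carry out the overview's plan: split at $|j|\le m^{1/4}$, identify the small-$j$ part with the Fourier sum $F(z+\pi i\theta_1,0)=\tfrac12\bigl(1+(-1)^{\theta_1}e^{-z}\bigr)/\bigl(1-(-1)^{\theta_1}e^{-z}\bigr)$ of Lemma \ref{lem:fourier}, and handle the large-$j$ tail by equidistribution on the unit circle together with a contour integral. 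There the delicate point is controlling the tail: it contributes a nonzero constant precisely because the limiting integrand $w\mapsto w^{\delta}/(w-1)$ has a pole on the contour at $w=1$, and this is exactly the phenomenon that the closed-form computation above sidesteps.
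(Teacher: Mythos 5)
Your argument is correct and it takes a genuinely different, and considerably more direct, route than the paper's. The paper decomposes the quantity $I_m$ as $G_\delta(z/m) + B_{\delta,m}$, evaluates the integral $G_\delta$ by contour integration (getting an answer that jumps discontinuously across $\Re(z)=0$), and then controls the Riemann-sum correction $B_{\delta,m}$ by splitting at $|j|\le m^{1/3}$ and invoking Lemmas \ref{lem:tech1} and \ref{lem:fourier} (Poisson summation) for the inner piece, finishing with a second contour integral to evaluate $\int_{-1/2}^{1/2}F(z+\pi i\theta_1+2\pi iu,0)\,\mathrm{d}u$. You instead observe that, because $m$ is even, $\{-m/2+1,\dots,m/2\}$ is a complete residue system mod $m$, so the $v_j=e^{-2\pi i(j+\theta_1/2)/m}$ enumerate the $m$-th roots of $a=(-1)^{\theta_1}$, and the whole sum is an exact rational function of $c=1+z/m$ obtained from the logarithmic-derivative identity $\sum_{v^m=a}(x-v)^{-1}=mx^{m-1}/(x^m-a)$. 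This yields the closed form $S_m=a^{\delta}c^{(1-\delta)(m-1)}/(c^m-a)$ for every finite $m$, and the limit follows trivially from $c^m\to e^z$. Your route bypasses the entire Fourier-analytic apparatus and the small/large-$j$ split; the hypothesis $\Re(z)\neq 0$ is used only to guarantee $|c|\neq 1$ for large $m$ (so no denominator vanishes), not in the essential way it enters the paper's contour argument.

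One point you should flag explicitly: your closed-form limit is
\begin{align*}
\frac{\bigl((-1)^{\theta_1}e^{-z}\bigr)^{\delta}}{1-(-1)^{\theta_1}e^{-z}},
\end{align*}
whereas the displayed statement of Lemma \ref{lem:tech0} has $\bigl((-1)e^{-z}\bigr)^{\delta}$ in the numerator, i.e.\ $(-1)$ rather than $(-1)^{\theta_1}$; these differ when $\delta=1$ and $\theta_1=0$. Simplifying the paper's own final display \eqref{eq:ova5}, namely $\mathbf{1}_{\delta=0}+\tfrac12\,\tfrac{1+(-1)^{\theta_1}e^{-z}}{1-(-1)^{\theta_1}e^{-z}}-\tfrac12$, gives $(-1)^{\theta_1}e^{-z}/(1-(-1)^{\theta_1}e^{-z})$ for $\delta=1$, which agrees with your answer and is also what is required for \eqref{eq:inverselim0} in the proof of Theorem \ref{thm:inverselim}. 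So the stated formula contains a typographical slip and your version is the correct one; it is worth saying so rather than silently presenting a different right-hand side than the lemma asserts.
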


\begin{proof}
Through this proof, all $O$ terms will depend implicitly on $z,\theta_1,\delta$, but be independent of $j/m$. 

Define the function
\begin{align*}
f_{\delta,m}(t) := \frac{ e^{ - 2 \pi i \delta t }}{m \left( 1 - e^{ - 2\pi i t } + z/m\right)}.
\end{align*}
Then we may write 
\begin{align} \label{eq:ova1}
I_m := \frac{1}{m} \sum_{ j = -m/2+1}^{m/2}   \frac{e^{-2\pi i \delta \frac{j+\theta_1/2}{m}}  }{1 -  \exp \left\{ - 2 \pi i  \frac{ j + \theta_1/2}{m} \right\} +  \frac{z}{m} }  = G_\delta(z/m) + B_{\delta,m},
\end{align}
where for $\alpha \in \mathbb{C}$, $G_\delta(\alpha) := \int_0^1 \frac{e^{ - 2 \pi i \delta \phi} }{1 + e^{ - 2 \pi i \phi } + \alpha } \mathrm{d} \phi$, and $B_{\delta,m} = \sum_{ j = -m/2+1}^{m/2} q_{\delta,j,m}$, with
\begin{align*}
q_{\delta,j,m} := \int_{-1/2}^{1/2} f_{\delta,m}(j+\theta_1/2) - f_{\delta,m}(j+\theta_1/2+u) \mathrm{d}u.
\end{align*}
Writing $G_\delta(\alpha)$ as a contour integral, and by considering the poles of the complex function\\
$f_\alpha(w) := w^{-(\delta+1)}( 1 -  w^{-1} + \alpha)^{-1}$ in the separate cases $\delta=0$ and $\delta=1$, we see that provided the modulus of $1+\alpha$ is not equal to one we have 
\begin{align*}
G_\delta(\alpha) = \oint_{S^1} \frac{ \mathrm{d}w}{ 2 \pi i w} \frac{w^{-\delta}}{1 - 1/w + \alpha}  = 
\begin{cases}
\frac{1}{1+\alpha} \mathbf{1}_{ |1+\alpha|>0 }   \qquad &\text{if $\delta = 0$},\\
- \mathbf{1}_{|1+\alpha| < 0 }  \qquad &\text{if $\delta=1$}.
\end{cases}
\end{align*}
Now if the real part of $z$ is non-zero, then for all sufficiently large $m$, $1+z/m$ has modulus different to one. In particular, we have
\begin{align} \label{eq:ova2}
\lim_{m \to \infty} G_\delta(z/m) = \mathbf{1}_{ \mathrm{Re}(z) > 0 , \delta = 0 } - \mathbf{1}_{ \mathrm{Re}(z) < 0 , \delta=1}
\end{align}
We now turn to evaluating $B_{\delta,m}$. On the one hand, a brief calculation tells us that
\begin{align} \label{eq:sib1}
q_{\delta,j,m} = O(1/j^2).
\end{align}
(Here and throughout, the $O$ term is uniform on $j$ and $m$, but may depend on $z,\delta,\theta_1$.) 

On the other hand, $f_{\delta,m}(t) = \frac{1}{2 \pi i t + z} + O_z(t/m)$, we have  
\begin{align} \label{eq:sib2}
q_{\delta,j,m} = \frac{1}{2 \pi i (j + \theta_1/2 ) + z} - \int_{-1/2}^{1/2} \frac{ \mathrm{du} }{ 2 \pi i (j + \theta_1/2+ u ) + z } + O(j/m).
\end{align}
In particular, for $1 \leq m_0 < m$, by \eqref{eq:sib1} we have 
\begin{align} \label{eq:fedor}
\sum_{ |j|>m_0} q_{\delta,j,m} = O(1/m_0),
\end{align}
and by \eqref{eq:sib2} we have 
\begin{align} \label{eq:alpine}
\sum_{ |j| \leq m_0} q_{\delta,j,m} = \sum_{ |j| \leq m_0 } \frac{1}{2 \pi i (j + \theta_1/2 ) + z} - \int_{-1/2}^{1/2} \sum_{ |j| \leq m_0 } \frac{ \mathrm{du} }{ 2 \pi i (j + \theta_1/2+ u ) + z } + O(m_0^2/m).
\end{align}
Now by virtue of Lemma \ref{lem:tech1}, with $F(z,0) := \sum_{ j \in \mathbb{Z}} \frac{1}{2 \pi i j + z }$, developing \eqref{eq:alpine} further we have 
\begin{align} \label{eq:alpine2}
\sum_{ |j| \leq m_0} q_{\delta,j,m} &= F(z+\pi i \theta_1,0) - \int_{-1/2}^{1/2} F(z+\pi i \theta_1 + 2 \pi i u) + O\left( \frac{1}{m_0 - |z+u| } \right) \mathrm{d}u +  O(m_0^2/m) \nonumber \\
&= F(z+\pi i \theta_1,0) - \int_{-1/2}^{1/2} F(z+\pi i \theta_1 +  2 \pi i u) \mathrm{d}u + O(1/m_0) +   O(m_0^2/m),
\end{align}
where we recall that we permit our $O$ terms to depend on $z$.

Combining \eqref{eq:fedor} with \eqref{eq:alpine2}, and using the definition of $B_{\delta,m}$ to obtain the first equality below, and then setting $m_0  = \lfloor m^{1/3} \rfloor$ to obtain the second, we see that
\begin{align} \label{eq:puresum3}
B_{\delta,m} &= F(z+\pi i \theta_1,0) - \int_{-1/2}^{1/2} F(z+\pi i \theta_1 + 2 \pi i  u) + O(1/m_0) +   O(m_0^2/m) \nonumber \\
&= F(z+\pi i \theta_1,0) - \int_{-1/2}^{1/2} F(z+\pi i \theta_1 + 2 \pi i  u) \mathrm{d}u + O(m^{-1/3}).
\end{align}
Combining \eqref{eq:ova2} and \eqref{eq:puresum3} in \eqref{eq:ova1}, we see that
\begin{align} \label{eq:ova3}
\lim_{m \to \infty} I_m =  \mathbf{1}_{ \mathrm{Re}(z) > 0 , \delta = 0 } - \mathbf{1}_{ \mathrm{Re}(z) < 0 , \delta=1} +  F(z+\pi i \theta_1,0) - \int_{-1/2}^{1/2} F(z+\pi i \theta_1 + 2 \pi i  u, 0 ) \mathrm{d}u. 
\end{align}
We now evaluate the integral in \eqref{eq:ova3}. We note that thanks to the periodicity $F(z+2 \pi i k, 0) = F(z,0)$ for all integers $k$ and all $z$ in $\mathbb{C}$, writing $z = x + iy$ we have $\int_{-1/2}^{1/2} F(z+\pi i \theta_1 + 2 \pi i u , 0 ) \mathrm{d}u = \int_{-1/2}^{1/2} F(x + 2 \pi i u , 0 ) \mathrm{d}u$. Using the formula for $F(z,0)$ in Lemma \ref{lem:fourier}, and then performing the contour integration, we have 
\begin{align} \label{eq:andres} 
 \int_{-1/2}^{1/2} F(x + 2 \pi i u , 0 ) \mathrm{d}u &= \int_{-1/2}^{1/2}  \frac{1}{2} \frac{ 1 + e^{ - x - 2 \pi i u }}{ 1 - e^{ - x - 2 \pi i u } } \mathrm{d}u \nonumber \\
&=  \oint_{|w|=1}\frac{ \mathrm{d}w}{2\pi i w} \frac{1}{2} \frac{ 1 + e^{ - x}/w}{ 1 - e^{ -x}/w } \mathrm{d}u \nonumber\\
&= \frac{1}{2} - \mathbf{1}_{x < 0 },
\end{align} 
where the term $\frac{1}{2}$ is due to the contribution from the pole at zero, and $- \mathbf{1}_{x < 0}$ is due to the pole at $e^x$ (which only lies inside the contour $\{ |w| = 1 \}$ when $x = \mathrm{Re}(z)$ is negative). Plugging \eqref{eq:andres} into \eqref{eq:ova3}, and using the definition of $F(z,0)$ in Lemma \ref{lem:fourier}, we obtain 
\begin{align} \label{eq:ova4}
\lim_{m \to \infty} I_m =  \mathbf{1}_{ \mathrm{Re}(z) > 0 , \delta = 0 } - \mathbf{1}_{ \mathrm{Re}(z) < 0 , \delta=1} + \frac{1}{2} \frac{1 +(-1)^{\theta_1} e^{-z}}
{1 - (-1)^{\theta_1} e^{-z}}  - \frac{1}{2} + \mathbf{1}_{\mathrm{Re}(z) < 0 },
\end{align}
which simplifies further to 
\begin{align} \label{eq:ova5}
\lim_{m \to \infty} I_m =  \mathbf{1}_{\delta = 0}  + \frac{1}{2} \frac{1 +(-1)^{\theta_1} e^{-z}}{1 - (-1)^{\theta_1} e^{-z}} - \frac{1}{2} .
\end{align}
A further calculation obtains the answer in \eqref{eq:puresum2}.

\end{proof}

We are now equipped to finally prove Theorem \ref{thm:inverselim}.
\begin{proof}[Proof of Theorem \ref{thm:inverselim}]
We begin with proving the equation \eqref{eq:inverselim}. Here, as remarked at the beginning of Section \ref{sec:inverselimproof}, without loss of generality we may assume $s \geq 0$. From here, we distinguish between the cases $s = 0$ and $s > 0$. The case $s > 0$ follows from \eqref{eq:lion} and Lemma \ref{lem:techs}. The case $s  = 0$ follows from \eqref{eq:lion} and the $\delta = 0$ case of Lemma \ref{lem:tech0}.

We now turn to \eqref{eq:inverselim0}. This equation follows by replacing $\lfloor sm \rfloor$ with $-1$ in \eqref{eq:lion}, and then using the $\delta=1$ case of Lemma \ref{lem:tech0}.
\end{proof}

We now tie our work together to prove Theorem \ref{thm:corr}.

\begin{proof}[Proof of Theorem \ref{thm:corr}]
Let $(x_i : i \in \mathcal{B} \sqcup \mathcal{O} \sqcup \mathcal{U} )$ be a collection of points on $\mathbb{T}_n$, and let $\tilde{x}_i$ and $\tilde{y}_i$ denote the collection of associated points on the doubly discrete torus $\mathbb{T}_{m,n}$ as defined at the beginning of Section \ref{sec:corrproof}. If $x_i = (t_i,h_i)$, then
\begin{align*}
\tilde{x}_i = ([[t_im]],h_i) \qquad \text{and} \qquad \tilde{y}_i = ([[t_im]]+\mathrm{1}_{i \in \mathcal{O}}, [h_i +\mathbf{1}_{i \in \mathcal{B}} ]).
\end{align*}
Then by \eqref{eq:inverse45} we have
\begin{align} \label{eq:scr}
\left(K^{1,1-\frac{\lambda}{m},\frac{T}{m}}_{m,n,\theta}\right)^{-1}(\tilde{y}_i,\tilde{x}_j) = \frac{1}{n} \sum_{ w^n = (-1)^{\theta_2}} w^{ - (h_j - [h_i +\mathbf{1}_{i \in \mathcal{B}}] ) } \frac{1}{m} \sum_{ j = 0}^{m-1}  \frac{ \exp \left\{ - 2 \pi i \frac{ j_1 + \theta_1 /2}{ m}\left( [[t_jm]] - [[t_jm]]- \mathrm{1}_{i \in \mathcal{O}} \right) \right\}  }{ 1 + \left( 1 - \frac{\lambda}{m} \right)  \exp \left\{ 2 \pi i  \frac{ j_1 + \theta_1 /2}{ m_1 } \right\} + \frac{Tw}{m}  }.
\end{align}
We now use Theorem \ref{thm:inverselim} to study the asymptotics of the term in \eqref{eq:scr} that depends on $m$. Multiplying the numerator and denominator by $\frac{1}{1 - \frac{\lambda}{m}}$ we see that we have 
\begin{align*}
 &\frac{1}{m} \sum_{ j = 0}^{m-1}  \frac{ \exp \left\{ - 2 \pi i \frac{ j_1 + \theta_1 /2}{ m}\left( [[t_jm]] - [[t_jm]]- \mathrm{1}_{i \in \mathcal{O}} \right) \right\}  }{ 1 + \left( 1 - \frac{\lambda}{m} \right)  \exp \left\{ 2 \pi i  \frac{ j_1 + \theta_1 /2}{ m_1 } \right\} + \frac{Tw}{m}  }\\
&=  \frac{1}{m} \sum_{ j = 0}^{m-1}  \frac{ \exp \left\{ - 2 \pi i \frac{ j_1 + \theta_1 /2}{ m}\left( [[t_jm]] - [[t_jm]]- \mathrm{1}_{i \in \mathcal{O}} \right) \right\}  }{ 1 +  \exp \left\{ 2 \pi i  \frac{ j_1 + \theta_1 /2}{ m_1 } \right\} + \frac{Tw + \lambda }{m}  } + O(1/m).
\end{align*}
We can now use Theorem \ref{thm:inverselim}. Note that since $ [[t_jm]] $ and $ [[t_im]]$ are both even, the quantity $[[t_jm]] - [[t_jm]]- \mathrm{1}_{i \in \mathcal{O}}$ has the same parity as $\mathrm{1}_{i \in \mathcal{O}}$. When $t_j \neq t_i$, by \eqref{eq:inverselim} we have
\begin{align} \label{eq:uni1}
\lim_{m \to \infty} \frac{1}{m} \sum_{ j = 0}^{m-1}  \frac{ \exp \left\{ - 2 \pi i \frac{ j_1 + \theta_1 /2}{ m}\left( [[t_jm]] - [[t_jm]]- \mathrm{1}_{i \in \mathcal{O}} \right) \right\}  }{ 1 + \left( 1 - \frac{\lambda}{m} \right)  \exp \left\{ 2 \pi i  \frac{ j_1 + \theta_1 /2}{ m_1 } \right\} + \frac{Tw}{m}  } =  (-1)^{ \mathrm{1}_{i \in \mathcal{O}} +  \theta_1 \mathbf{1}_{ t_j < t_i } } \frac{ e^{ - (Tw+\lambda)[t_j - t_i] } }{ 1 - (-1)^{\theta_1} e^{-(Tw+\lambda) } }  \qquad t_j \neq t_i. 
\end{align}
We study the case $t_j = t_i$, taking care to distinguish between the cases $i \in \mathcal{O}$ and $i \notin \mathcal{O}$. If $i \notin \mathcal{O}$, then we are simply in the setting of \eqref{eq:inverselim} again, this time with $s = 0$, and accordingly we have
\begin{align}\label{eq:uni2}
\lim_{m \to \infty} \frac{1}{m} \sum_{ j = 0}^{m-1}  \frac{ \exp \left\{ - 2 \pi i \frac{ j_1 + \theta_1 /2}{ m}\left( [[t_jm]] - [[t_jm]]- \mathrm{1}_{i \in \mathcal{O}} \right) \right\}  }{ 1 + \left( 1 - \frac{\lambda}{m} \right)  \exp \left\{ 2 \pi i  \frac{ j_1 + \theta_1 /2}{ m_1 } \right\} + \frac{Tw}{m}  } = \frac{ 1 }{ 1 - (-1)^{\theta_1} e^{-(Tw+\lambda) } } \qquad t_j = t_i, i \notin \mathcal{O}.
\end{align}
If $t_j = t_i$ and $i \in \mathcal{O}$, this time we are in the setting of \eqref{eq:inverselim0} and we have
\begin{align}\label{eq:uni3}
\lim_{m \to \infty} \frac{1}{m} \sum_{ j = 0}^{m-1}  \frac{ \exp \left\{ - 2 \pi i \frac{ j_1 + \theta_1 /2}{ m}\left( [[t_jm]] - [[t_jm]]- \mathrm{1}_{i \in \mathcal{O}} \right) \right\}  }{ 1 + \left( 1 - \frac{\lambda}{m} \right)  \exp \left\{ 2 \pi i  \frac{ j_1 + \theta_1 /2}{ m_1 } \right\} + \frac{Tw}{m}  } =  (-1)^{ \theta_1 + 1  } \frac{ e^{ - (Tw+\lambda) } }{ 1 - (-1)^{\theta_1} e^{-(Tw+\lambda) } }  \qquad t_j = t_i, i \in \mathcal{O}.
\end{align}
The equations \eqref{eq:uni1}, \eqref{eq:uni2} and \eqref{eq:uni3} may be unified as 
\begin{align} \label{eq:uni1}
\lim_{m \to \infty} \frac{1}{m} \sum_{ j = 0}^{m-1}  \frac{ \exp \left\{ - 2 \pi i \frac{ j_1 + \theta_1 /2}{ m}\left( [[t_jm]] - [[t_jm]]- \mathrm{1}_{i \in \mathcal{O}} \right) \right\}  }{ 1 + \left( 1 - \frac{\lambda}{m} \right)  \exp \left\{ 2 \pi i  \frac{ j_1 + \theta_1 /2}{ m_1 } \right\} + \frac{Tw}{m}  } =  (-1)^{ \mathrm{1}_{i \in \mathcal{O}} +  \theta_1 \mathbf{1}_{ t_j <_i t_i } } \frac{ e^{ - (Tw+\lambda)[t_j - t_i]_i } }{ 1 - (-1)^{\theta_1} e^{-(Tw+\lambda) } }  .
\end{align}
where $\{ t' <_i t \} = \{ t' < t \} \cup \{ t' = t, i \in \mathcal{O} \}$ and $[t'-t]_i = t'-t+\mathrm{1}_{t' < t} + \mathrm{1}_{i \in \mathcal{O}}\mathrm{1}_{t'=t} $. 

Plugging \eqref{eq:uni1} into \eqref{eq:scr} we have
\begin{align} \label{eq:scr2}
\lim_{m \to \infty} 
\left(K^{1,1-\frac{\lambda}{m},\frac{T}{m}}_{m,n,\theta}\right)^{-1}(\tilde{y}_i,\tilde{x}_j) =(-1)^{ \mathrm{1}_{i \in \mathcal{O}} +  \theta_1 \mathbf{1}_{ t_j <_i t_i } }   \frac{1}{n} \sum_{ w^n = (-1)^{\theta_2}} w^{ - (h_j - [h_i +\mathbf{1}_{i \in \mathcal{B}}] ) } \frac{ e^{ - (Tw+\lambda)[t_j - t_i]_i } }{ 1 - (-1)^{\theta_1} e^{-(Tw+\lambda) } }.
\end{align}
Note that $[h_i + \mathrm{1}_{i \in \mathcal{B}}] = h_i + \mathrm{1}_{i \in \mathcal{B}} - n \mathrm{1}_{i \in \mathcal{B}, h_i = n-1}$. In particular, for $w^n = (-1)^{\theta_2}$ we have
\begin{align*}
w^{ - (h_j - [h_i +\mathbf{1}_{i \in \mathcal{B}}] ) }  = (-1)^{\theta_2  \mathrm{1}_{i \in \mathcal{B}, h_i = n-1}} w^{ \mathrm{1}_{i \in \mathcal{B}}  +h_i-h_j}.
\end{align*}
In summary,
\begin{align} \label{eq:scr3}
\lim_{m \to \infty} 
\left(K^{1,1-\frac{\lambda}{m},\frac{T}{m}}_{m,n,\theta}\right)^{-1}(\tilde{y}_i,\tilde{x}_j) =(-1)^{ \mathrm{1}_{i \in \mathcal{O}} +  \theta_1 \mathbf{1}_{ t_j <_i t_i } + \theta_2  \mathrm{1}_{i \in \mathcal{B}, h_i = n-1} }   \frac{1}{n} \sum_{ w^n = (-1)^{\theta_2}} \frac{ w^{ \mathrm{1}_{i \in \mathcal{B}}  +h_i-h_j} e^{ - (Tw+\lambda)[t_j - t_i]_i } }{ 1 - (-1)^{\theta_1} e^{-(Tw+\lambda) } }.
\end{align}
Let $\{ x_i : i \in \mathcal{B} \sqcup \mathcal{O} \sqcup \mathcal{U} \}$ now be a collection of points in $\mathbb{T}_n$, where $x_i = (t_i,h_i)$. Recall we have defined
\begin{align*}
\tilde{x}_i := ([[t_im]],h) \qquad \tilde{y}_i := ([[[t_im]] + \mathrm{1}_{i \in \mathcal{O}}],[h_i + \mathrm{1}_{i \in\mathcal{B}}]).
\end{align*}
Then as remarked at the beginning of Section \ref{sec:corrproof} we have
\begin{align} \label{eq:scran}
\mathbf{P}_n^{\alpha,\theta,T} ( \Gamma(x_i : i \in \mathcal{B} \sqcup \mathcal{O} \sqcup \mathcal{U} )) = \lim_{m \to \infty} m^{ \# \mathcal{B} }  P^{1,1-\frac{\lambda}{m},\frac{T}{m}}_{m,n} \left( \sigma(\tilde{x}^1) = \tilde{y}^1,\ldots,\sigma(\tilde{x}^p) = \tilde{y}^p \right) ,
\end{align}
Now according to \eqref{eq:beet44}
\begin{align} \label{eq:beet55}
P^{1,1-\frac{\lambda}{m},\frac{T}{m}}_{m,n} \left( \sigma(x^1) = y^1,\ldots,\sigma(x^p) = y^p \right) = & (-1)^{ \theta_1 g_1 + \theta_2 g_2}  \left( 1-  \frac{T}{m} \right)^{ \# \{ i : y^i - x^i = \mathbf{e}_1 \} }   \left( \frac{T}{m} \right)^{ \# \{ i : y^i - x^i = \mathbf{e}_2 \} } \nonumber \\
& \times   \det \limits_{ i,j = 1}^k \left(  \left(K^{\alpha,\beta,\gamma}_{m_1,m_2,\theta}\right)^{-1} ( y^i,x^j) \right).
\end{align}
Now $g_2 = \# \{ i \in \mathcal{B} : h_i = n-1 \}$, and for sufficiently large $m$, $g_1 = 0$ since $g_1$ is the number of $i \in \mathcal{O}$ such that $[[t_i m]] = m-1$, which clearly cannot be the case whenever $t_i < 1$ for large $m$.

Plugging \eqref{eq:beet55} into \eqref{eq:scran} and using these observations about $g_1$ and $g_2$ to obtain the first equality below, and then using \eqref{eq:scr3} to obtain the second we have
\begin{align} \label{eq:scran2}
&\mathbf{P}_n^{\alpha,\theta,T} ( \Gamma(x_i : i \in \mathcal{B} \sqcup \mathcal{O} \sqcup \mathcal{U} )) \nonumber \\
&=(-1)^{ \theta_2 \# \{ i \in \mathcal{B} : h_i = n-1 \} } T^{ \# \mathcal{B}}   \lim_{m \to \infty}   \det \limits_{ i,j = 1}^k \left(  \left(K^{1, 1-\frac{\lambda}{m},\frac{T}{m} }_{m,n,\theta}\right)^{-1} ( \tilde{y}^i,\tilde{x}^j) \right)
 \nonumber \\
&=(-1)^{ \theta_2 \# \{ i \in \mathcal{B} : h_i = n-1 \} } T^{ \# \mathcal{B}} \det \limits_{ i, j \in \mathcal{B} \sqcup \mathcal{O} \sqcup \mathcal{U} }  \left( (-1)^{ \mathrm{1}_{i \in \mathcal{O}} +  \theta_1 \mathbf{1}_{ t_j <_i t_i } + \theta_2  \mathrm{1}_{i \in \mathcal{B}, h_i = n-1} }   \frac{1}{n} \sum_{ w^n = (-1)^{\theta_2}} \frac{ w^{ \mathrm{1}_{i \in \mathcal{B}}  +h_i-h_j} e^{ - (Tw+\lambda)[t_j - t_i]_i } }{ 1 - (-1)^{\theta_1} e^{-(Tw+\lambda) } }  \right) .
\end{align}
Rearranging to obtain the first equality below, and then noting that multiplying the $(i,j)^{\text{th}}$ entry by $e^{ \theta_i \pi i (t_j - t_i) } $ leaves the determinant unchanged to obtain the second, and then using the definition $[t_j - t_i ] := t_j - t_i + \mathrm{1}_{t_j < t_i} + \mathrm{1}_{i \in \mathcal{O} , t_j = t_i } = t_j - t_i + \mathrm{1}_{t_j <_i t_i} $ to obtain the third we have 
\begin{align}
&\mathbf{P}_n^{\alpha,\theta,T} ( \Gamma(x_i : i \in \mathcal{B} \sqcup \mathcal{O} \sqcup \mathcal{U} )) \nonumber \\
&=  \det \limits_{ i, j \in \mathcal{B} \sqcup \mathcal{O} \sqcup \mathcal{U} }  \left( (-1)^{ \mathrm{1}_{i \in \mathcal{O}} +  \theta_1 \mathbf{1}_{ t_j <_i t_i }}   \frac{T^{\mathrm{1}_{i \in \mathcal{B}}} }{n} \sum_{ w^n = (-1)^{\theta_2}} \frac{ w^{ \mathrm{1}_{i \in \mathcal{B}}  +h_i-h_j} e^{ - (Tw+\lambda)[t_j - t_i]_i } }{ 1 - (-1)^{\theta_1} e^{-(Tw+\lambda) } }  \right) \nonumber \\ 
&=  \det \limits_{ i, j \in \mathcal{B} \sqcup \mathcal{O} \sqcup \mathcal{U} }  \left( e^{ \theta_i \pi i (t_j - t_i) }  (-1)^{ \mathrm{1}_{i \in \mathcal{O}} +  \theta_1 \mathbf{1}_{ t_j <_i t_i }}   \frac{T^{\mathrm{1}_{i \in \mathcal{B}}} }{n} \sum_{ w^n = (-1)^{\theta_2}} \frac{ w^{ \mathrm{1}_{i \in \mathcal{B}}  +h_i-h_j} e^{ - (Tw+\lambda)[t_j - t_i]_i } }{ 1 - (-1)^{\theta_1} e^{-(Tw+\lambda) } }  \right)  \nonumber \\
&=  \det \limits_{ i, j \in \mathcal{B} \sqcup \mathcal{O} \sqcup \mathcal{U} }  \left( (-1)^{ \mathrm{1}_{i \in \mathcal{O}} } \frac{T^{\mathrm{1}_{i \in \mathcal{B}}} }{n} \sum_{ w^n = (-1)^{\theta_2}} \frac{ w^{ \mathrm{1}_{i \in \mathcal{B}}  +h_i-h_j} e^{ - (Tw+\theta_1 \pi i+ \lambda)[t_j - t_i]_i } }{ 1 - (-1)^{\theta_1} e^{-(Tw+\lambda) } }  \right) .
\end{align}
We now reconcile \eqref{eq:scran2} with Theorem \ref{thm:corr}. We need to check that the quantity inside the determinant agrees with either $K_{i,j}, H_{i,j}$ or $\delta_{i,j} - K_{i,j}$, depending on whether $i \in \mathcal{B}$, $\mathcal{O}$ or $\mathcal{U}$ respectively.

Now with $K_n^{\lambda,\theta,T}$ and $H_n^{\lambda,\theta,T}$ as in \eqref{eq:anotherK} and \eqref{eq:anotherH}, it is easily established that for $i \in \mathcal{O}$ we have 
\begin{align*}
(-1)^{ \mathrm{1}_{i \in \mathcal{O}} } \frac{T^{\mathrm{1}_{i \in \mathcal{B}}} }{n} \sum_{ w^n = (-1)^{\theta_2}} \frac{ w^{ \mathrm{1}_{i \in \mathcal{B}}  +h_i-h_j} e^{ - (Tw+\theta_1 \pi i+ \lambda)[t_j - t_i]_i } }{ 1 - (-1)^{\theta_1} e^{-(Tw+\lambda) } } = K_n^{\lambda,\theta,T}(x_i,x_j),
\end{align*}
and for $i \in \mathcal{B}$ we have
 \begin{align*}
(-1)^{ \mathrm{1}_{i \in \mathcal{O}} } \frac{T^{\mathrm{1}_{i \in \mathcal{B}}} }{n} \sum_{ w^n = (-1)^{\theta_2}} \frac{ w^{ \mathrm{1}_{i \in \mathcal{B}}  +h_i-h_j} e^{ - (Tw+\theta_1 \pi i+ \lambda)[t_j - t_i]_i } }{ 1 - (-1)^{\theta_1} e^{-(Tw+\lambda) } } = H_n^{\lambda,\theta,T}(x_i,x_j).
\end{align*}
It remains to establish that for $i \in \mathcal{U}$ we have 
 \begin{align*}
(-1)^{ \mathrm{1}_{i \in \mathcal{O}} } \frac{T^{\mathrm{1}_{i \in \mathcal{B}}} }{n} \sum_{ w^n = (-1)^{\theta_2}} \frac{ w^{ \mathrm{1}_{i \in \mathcal{B}}  +h_i-h_j} e^{ - (Tw+\theta_1 \pi i+ \lambda)[t_j - t_i]_i } }{ 1 - (-1)^{\theta_1} e^{-(Tw+\lambda) } } = \mathrm{1}_{x_i = x_j } - K_n^{\lambda,\theta,T}(x_i,x_j),
\end{align*}
or more explicitly, for $(t,h)$ and $(t',h')$ in $\mathbb{T}_n$, 
 \begin{align} \label{eq:turner}
 \frac{ 1 }{n} \sum_{ w^n = (-1)^{\theta_2}} \frac{ w^{ h-h'} e^{ - (Tw+\theta_1 \pi i+ \lambda)[t' - t] } }{ 1 - (-1)^{\theta_1} e^{-(Tw+\lambda) } } = \mathrm{1}_{(t,h) = (t',h')} +  \frac{1}{n} \sum_{w^n = (-1)^{\theta_2}} \frac{ w^{h-h'} e^{ - (\lambda+\theta_1 \pi i+Tw)([t'-t]+\mathrm{1}_{t'=t}) } }{ 1 - e^{-  (\lambda+\theta_1 \pi i +Tw) } } .
\end{align}
It is clear that \eqref{eq:turner} holds when $t'\neq t$. In the case $t ' = t$ \eqref{eq:turner} amounts to the equation:
 \begin{align} \label{eq:turner2}
 \frac{ 1 }{n} \sum_{ w^n = (-1)^{\theta_2}} \frac{ w^{ h-h'}  }{ 1 - e^{-  (\lambda+\theta_1 \pi i +Tw) } } = \mathrm{1}_{h'=h} +  \frac{1}{n} \sum_{w^n = (-1)^{\theta_2}} \frac{ w^{h-h'} e^{ - (\lambda+\theta_1 \pi i+Tw)} }{  1 - e^{-  (\lambda+\theta_1 \pi i +Tw) } } .
\end{align}
Writing
\begin{align*}
\frac{ e^{ - (\lambda+\theta_1 \pi i+Tw)} }{  1 - e^{-  (\lambda+\theta_1 \pi i +Tw) } } = - 1 + \frac{1}{  1 - e^{-  (\lambda+\theta_1 \pi i +Tw) } }, 
\end{align*}
we see that 
 \begin{align} \label{eq:turner3}
\mathrm{1}_{h'=h} +  \frac{1}{n} \sum_{w^n = (-1)^{\theta_2}} \frac{ w^{h-h'} e^{ - (\lambda+\theta_1 \pi i+Tw)} }{  1 - e^{-  (\lambda+\theta_1 \pi i +Tw) } } = \mathrm{1}_{h'=h} -  \frac{1}{n} \sum_{w^n = (-1)^{\theta_2}} w^{ h-h'} +  \frac{1}{n} \sum_{w^n = (-1)^{\theta_2}} \frac{ w^{h-h'} }{  1 - e^{-  (\lambda+\theta_1 \pi i +Tw) } } . 
\end{align}
Using the fact that for $h,h' \in \mathbb{Z}_n$,  $\frac{1}{n} \sum_{w^n = (-1)^{\theta_2}} w^{ h-h'}  = \mathrm{1}_{h'=h}$, \eqref{eq:turner2} now follows from \eqref{eq:turner3}.

That completes the proof of Theorem \ref{thm:corr}.
\end{proof}

\section*{Acknowledgements}
The author would like to thank Elia Bisi for his valuable comments.\\

\noindent
This research is supported by the EPSRC funded Project EP/S036202/1 \emph{Random fragmentation-coalescence processes out of equilibrium}.

\end{document}